\documentclass[12pt,reqno]{article}

\usepackage[usenames]{color}
\usepackage{amssymb}
\usepackage{amsmath}
\usepackage{amsthm}
\usepackage{amsfonts}
\usepackage{amscd}
\usepackage{graphicx}

\usepackage[colorlinks=true,
linkcolor=webgreen,
filecolor=webbrown,
citecolor=webgreen]{hyperref}

\definecolor{webgreen}{rgb}{0,.5,0}
\definecolor{webbrown}{rgb}{.6,0,0}

\usepackage{color}
\usepackage{fullpage}
\usepackage{float}

\usepackage{graphics}
\usepackage{latexsym}
\usepackage{epsf}
\usepackage{breakurl}
\usepackage{accents}

\setlength{\textwidth}{6.5in}
\setlength{\oddsidemargin}{.1in}
\setlength{\evensidemargin}{.1in}
\setlength{\topmargin}{-.1in}
\setlength{\textheight}{8.6in}

\begin{document}

\theoremstyle{plain}
\newtheorem{theorem}{Theorem}
\newtheorem{corollary}[theorem]{Corollary}
\newtheorem{lemma}{Lemma}
\newtheorem{example}{Example}

\begin{center}
	\vskip 1cm{\LARGE\bf New binomial Fibonacci sums \\
		\vskip .1in }
	
	\vskip .8cm
	
	{\large
		
		Kunle Adegoke \\
		Department of Physics and Engineering Physics, \\ Obafemi Awolowo University, Ile-Ife, Nigeria \\
		\href{mailto:adegoke00@gmail.com}{\tt adegoke00@gmail.com}
		
		\vskip 0.1 in
		
		Robert Frontczak\footnote{Statements and conclusions made in this article by R. Frontczak are entirely those of the author. They do not necessarily  reflect the views of LBBW.\\ \noindent {2010 {\it Mathematics Subject Classification}:
				Primary 11B39; Secondary 11B37.}\\
			\noindent{\textit{Keywords}: Fibonacci  number,  Lucas number, summation identity, binomial coefficient.}} \\
		Landesbank Baden-W\"urttemberg, Stuttgart,  Germany \\
		\href{mailto:robert.frontczak@lbbw.de}{\tt robert.frontczak@lbbw.de}
		
		\vskip 0.1 in
		
		Taras Goy  \\
		Faculty of Mathematics and Computer Science\\
		Vasyl Stefanyk Precarpathian National University, Ivano-Frankivsk, Ukrai\-ne\\
		\href{mailto:taras.goy@pnu.edu.ua}{\tt taras.goy@pnu.edu.ua}}
\end{center}

\vskip .2 in

\begin{abstract}
	\noindent We present some new linear, quadratic, cubic and quartic binomial Fibonacci, Lucas and Fibonacci--Lucas summation identities.
\end{abstract}

\section{Introduction}

Our goal is to derive, from elementary identities, some presumably new Fibonacci and Lucas identities including binomial coefficients. The research is a continuation of the recent works by Adegoke \cite{adegoke2,adegoke1} and Adegoke et~al.~\cite{adegoke3}.
The results are similar to those found in the classical articles of Carlitz and Ferns \cite{CarFer}, Hoggatt et al. \cite{hoggatt71}, Layman \cite{Lay}, Carlitz \cite{Car} and Long \cite{long90}. 

Recall that the Fibonacci numbers $F_j$ and the Lucas numbers $L_j$ are defined, for \text{$j\in\mathbb Z$}, through the recurrence relations
$$F_j = F_{j-1}+F_{j-2}, \quad j\ge 2,\quad F_0=0, \,\,F_1=1,
$$
$$L_j = L_{j-1}+L_{j-2},\quad j\ge 2, \quad L_0=2, \,\, L_1=1,
$$ 
with 
$$F_{-j} = (-1)^{j-1}F_j,\qquad L_{-j} = (-1)^j L_j.
$$

Throughout this paper, we denote the golden ratio by $\alpha=\frac{1+\sqrt 5}2$ and write $\beta=-\frac{1}{\alpha}$,
so that $\alpha\beta=-1$ and $\alpha+\beta=1$. Binet formulas for the Fibonacci and Lucas numbers are
\begin{equation}
F_j = \frac{{\alpha ^j - \beta ^j }}{{\alpha - \beta }},\quad L_j = \alpha ^j + \beta ^j, \quad j\in\mathbb Z. \label{binet}
\end{equation}

Koshy \cite{koshy} and Vajda \cite{vajda} have written excellent books dealing with Fibonacci and Lucas numbers.

Here are a couple of results to whet the readers appetite for reading on:
\begin{align*}
\sum_{k = 1}^n \binom{2n - 1}{2k - 1}F_{2k - 1}   
&= \begin{cases}
\frac12 F_{2n - 1} L_{n - 1} L_n, & \text{$n$ odd;}\\
\frac52 F_{2n - 1} F_{n - 1} F_n , & \text{$n$ even,}
\end{cases}\\
\sum_{k = 1}^n {\binom{2n - 1}{2k - 1}L_{2k - 1} } & = \frac12(L_{4n - 2} - L_{2n - 1}),\\
\sum_{k = 0}^n\binom{2n}{2k}F_{3k + r} F_{3k + s}  &= 2^{2n - 1}F_n F_{3n + r + s},\\
\sum_{k = 0}^{\left\lfloor {n}/{2} \right\rfloor} \binom {n}{2k} {2^{n-2k+1}} F_{2k + s} &=
\begin{cases}
(-1)^{s+1} F_{2n - s} + 5^{{n}/{2}}F_{n+s}, & \text{\rm $n$ even;}\\  (-1)^{s+1} F_{2n - s} + 5^{({n-1})/{2}}L_{n+s}, & \text{\rm $n$ odd.} \end{cases}\\
\sum_{k = 0}^{n} (- 1)^k \binom {2n}{2k} L_{j(2rk + s)} 
&	= 
\begin{cases}
\sqrt{5^n} F_{jr}^n L_{jrn + js} \cos \big(2n\arctan (\alpha ^{jr} )\big), & \text{\rm $jr$ odd;}\\
L_{jr}^n L_{jrn + js} \cos \big(2n\arctan (\alpha ^{jr} )\big), & \text{\rm $jr$ even, $n$ even;}\\
\sqrt5L_{jr}^n F_{jrn + js} \cos \big(2n\arctan (\alpha ^{jr} )\big), & \text{\rm $jr$ even, $n$ odd.} \end{cases}
\end{align*}

\section{Required identities}

\begin{lemma}[K.~Adegoke  \cite{adegoke2}]
	For real or complex $z$, let a given well-behaved 
	function $h(z)$ have in its domain the representation $h(z)=\sum\limits_{k=c_1}^{c_2}{g(k)z^{f(k)}}$, where $f(k)$ and $g(k)$ are given real sequences and $-\infty\leq c_1\leq c_2\leq \infty$. Let $j$ be an integer. Then
	\begin{align}
	\label{eq.y4g4no6}
	\sqrt 5 \sum_{k = c_1}^{c_2} {g(k)z^{f(k)} F_{jf(k)} } & = h(\alpha ^{j}z ) - h(\beta ^{j}z ),\\
	\label{eq.r2vyn7u}
	\sum_{k = c_1}^{c_2} {g(k)z^{f(k)} L_{jf(k)} } & = h(\alpha ^{j}z ) + h(\beta ^{j}z ).
	\end{align}
\end{lemma}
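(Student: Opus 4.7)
The plan is to substitute the Binet formulas \eqref{binet} directly into the left-hand sides and recognize the resulting expressions as evaluations of $h$ at shifted arguments. Specifically, I would replace $F_{jf(k)}$ with $(\alpha^{jf(k)}-\beta^{jf(k)})/\sqrt5$ in the left-hand side of \eqref{eq.y4g4no6}, so that the $\sqrt5$ factors cancel, and then split the sum into two pieces.

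The key observation is that $z^{f(k)}\alpha^{jf(k)} = (\alpha^j z)^{f(k)}$ and $z^{f(k)}\beta^{jf(k)} = (\beta^j z)^{f(k)}$, which allows each piece to be rewritten as a sum of the form $\sum_{k=c_1}^{c_2} g(k) w^{f(k)}$ for $w=\alpha^j z$ and $w=\beta^j z$ respectively. By the hypothesis on $h$, these are exactly $h(\alpha^j z)$ and $h(\beta^j z)$, yielding \eqref{eq.y4g4no6}. For \eqref{eq.r2vyn7u}, the same maneuver is applied with $L_{jf(k)} = \alpha^{jf(k)} + \beta^{jf(k)}$; here the Binet formula introduces no $\sqrt5$, and the two pieces add rather than subtract.

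The only genuine subtlety is the phrase \emph{well-behaved}: for the formal manipulation to be rigorous, one needs the series defining $h(z)$ to converge (absolutely, say) at both $w=\alpha^j z$ and $w=\beta^j z$, so that rearrangement and termwise splitting are justified. In the finite-sum case ($c_1,c_2$ both finite) this is automatic and the proof is a one-line calculation; in the infinite-sum case one tacitly assumes $z$ lies in a region ensuring convergence at the shifted points, and this is the only real obstacle. Since no further hypotheses are spelled out, I would simply note that the identities hold wherever the three series $h(z)$, $h(\alpha^j z)$, $h(\beta^j z)$ are all defined and rearrangement is legal, and otherwise present the derivation as a direct algebraic substitution.
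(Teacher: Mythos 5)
Your proof is correct: substituting the Binet formulas and absorbing $z^{f(k)}\alpha^{jf(k)}=(\alpha^j z)^{f(k)}$, $z^{f(k)}\beta^{jf(k)}=(\beta^j z)^{f(k)}$ into the defining series for $h$ is exactly the intended derivation (the paper itself gives no proof, importing the lemma from the cited preprint, where this is the argument). Your remark that the \emph{well-behaved} hypothesis is what licenses termwise splitting and rearrangement when $c_1$ or $c_2$ is infinite is the right reading of that phrase, and nothing further is needed.
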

\begin{lemma}[S.~Vajda \cite{vajda}] \label{lem.crcmndb}
	For integers $r$ and $s$,
	\begin{align}
	&F_{r + s} + (-1)^sF_{r - s} = L_sF_r, \qquad\quad F_{r + s} - (-1)^sF_{r - s} = F_sL_r, \label{eq.t3h2005}\\
	&L_{r + s} + (-1)^sL_{r - s} = L_sL_r, \qquad\quad  L_{r + s} - (-1)^sL_{r - s} = 5F_sF_r. \label{eq.iix4fk6}
	\end{align}
\end{lemma}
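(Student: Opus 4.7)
The plan is to prove all four identities uniformly by substituting the Binet formulas \eqref{binet} and exploiting the single algebraic fact $\alpha\beta=-1$, so that $(-1)^s=(\alpha\beta)^s=\alpha^s\beta^s$. This identity is the bridge that collapses the mixed term $(-1)^s\alpha^{r-s}$ into $\alpha^r\beta^s$ (and similarly $(-1)^s\beta^{r-s}$ into $\beta^r\alpha^s$), which is exactly what is needed to factor out $\alpha^r\pm\beta^r$ or $\alpha^s\pm\beta^s$ and recognize Fibonacci/Lucas numbers through \eqref{binet}.

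First I would handle $F_{r+s}+(-1)^sF_{r-s}=L_sF_r$. Using Binet for $F_j$, the left-hand side equals
\[
\frac{1}{\alpha-\beta}\Bigl[\bigl(\alpha^{r+s}+(\alpha\beta)^s\alpha^{r-s}\bigr)-\bigl(\beta^{r+s}+(\alpha\beta)^s\beta^{r-s}\bigr)\Bigr]
=\frac{\alpha^r(\alpha^s+\beta^s)-\beta^r(\alpha^s+\beta^s)}{\alpha-\beta},
\]
which is $L_s F_r$. The same regrouping applied with the minus sign produces $F_s L_r$, yielding \eqref{eq.t3h2005}. For \eqref{eq.iix4fk6} I would run the identical manipulation starting from $L_j=\alpha^j+\beta^j$: the $(-1)^s$ trick gives $\alpha^r(\alpha^s+\beta^s)+\beta^r(\alpha^s+\beta^s)=L_r L_s$ in the plus case, and $\alpha^r(\alpha^s-\beta^s)-\beta^r(\alpha^s-\beta^s)=(\alpha-\beta)^2 F_r F_s=5F_rF_s$ in the minus case (since $(\alpha-\beta)^2=5$).

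There is no real obstacle here: the only subtlety is bookkeeping with the sign $(-1)^s$ and remembering that $\alpha-\beta=\sqrt5$, so that $(\alpha-\beta)^2=5$ supplies the factor of $5$ in the last identity. Because the argument is purely algebraic in $\alpha,\beta$, the identities hold for all integers $r,s$ (positive or negative) without a separate case analysis, which is consistent with the conventions $F_{-j}=(-1)^{j-1}F_j$ and $L_{-j}=(-1)^jL_j$ recorded in the introduction.
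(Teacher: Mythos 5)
Your proof is correct: each of the four identities follows exactly as you describe from the Binet formulas \eqref{binet} together with $(-1)^s=(\alpha\beta)^s$ and $(\alpha-\beta)^2=5$, and since the argument is purely algebraic in $\alpha,\beta$ it holds for all integers $r,s$. The paper gives no proof of this lemma at all --- it is quoted directly from Vajda's book --- so there is nothing to compare against; your Binet computation is the standard verification and is complete.
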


If $u$ and $v$ are integers having the same parity, then identities \eqref{eq.t3h2005} and \eqref{eq.iix4fk6} can be put in the useful versions:
\begin{align*}
F_u & + ( - 1)^{\frac{u - v}2} F_v  = L_{\frac{u - v}{2}} F_{\frac{u + v}2},\qquad\quad
F_u  - ( - 1)^{\frac{u - v}2} F_v  = F_{\frac{u - v}2} L_{\frac{u + v}2},\\ 
L_u & + ( - 1)^{\frac{u - v}2} L_v  = L_{\frac{u - v}2} L_{\frac{u + v}2},\qquad\quad 
L_u  - ( - 1)^{\frac{u - v}2} L_v  = 5F_{\frac{u - v}2} F_{\frac{u + v}2},
\end{align*}
and
\begin{align*}
F_u  + F_v  &= 
\begin{cases}
L_{\frac{u - v}{2}} F_{\frac{u + v}{2}}, & \text{$\frac{u - v}{2}$ even;}\\
F_{\frac{u - v}{2}} L_{\frac{u + v}{2}}, & \text{$\frac{u - v}{2} $ odd,} \end{cases}\qquad\quad 
F_u  - F_v  =
\begin{cases}
L_{\frac{u - v}{2}} F_{\frac{u + v}{2}}, & \text{$\frac{u - v}{2}$ odd;}\\
F_{\frac{u - v}{2}} L_{\frac{u + v}{2}}, & \text{$\frac{u - v}{2}$ even,} \end{cases}\\
L_u  + L_v & = 
\begin{cases}
L_{\frac{u - v}{2}} L_{\frac{u + v}{2}}, & \text{$\frac{u - v}{2}$ even;}\\  5F_{\frac{u - v}{2}} F_{\frac{u + v}{2}}, & \text{$\frac{u - v}{2}$ odd,} \end{cases} \qquad\quad
L_u  - L_v  = 
\begin{cases}
L_{\frac{u - v}{2}} L_{\frac{u + v}{2}}, & \text{$\frac{u - v}{2}$ odd;}\\  5F_{\frac{u - v}{2}} F_{\frac{u + v}{2}}, & \text{$\frac{u - v}{2}$ even.} \end{cases}
\end{align*}
\begin{lemma} [K.~Adegoke  \cite{adegoke2}]
	\label{lem.dgcin1i}
	For $p$ and $q$ integers,
	\begin{align}\label{eq.z1yfyw6}
	1 + ( - 1)^p \alpha ^{2q}  
	&= \begin{cases}
	( - 1)^p \alpha ^q \sqrt 5 F_q , & \text{\rm $p-q$ odd;}\\
	( - 1)^p \alpha ^q L_q, & \text{\rm $p-q$ even,}
	\end{cases}\\
	\label{eq.xcamhlp}
	1 - ( - 1)^p \alpha ^{2q} & =
	\begin{cases}
	( - 1)^{p - 1} \alpha ^q L_q, & \text{\rm  $p-q$ odd;}\\
	( - 1)^{p - 1} \alpha ^q \sqrt 5 F_q , & \text{\rm  $p-q$ even.}
	\end{cases}
	\end{align}
\end{lemma}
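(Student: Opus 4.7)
The plan is to verify both identities by factoring out a suitable power of $\alpha$ from the left-hand side and then reducing the remaining two-term expression to $\alpha^q\pm\beta^q$, which the Binet formulas \eqref{binet} identify with $\sqrt 5\,F_q$ or $L_q$.

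First I would pull $(-1)^p\alpha^q$ out of the LHS of \eqref{eq.z1yfyw6} to write
\[
1+(-1)^p\alpha^{2q}=(-1)^p\alpha^q\!\left(\alpha^q+(-1)^p\alpha^{-q}\right),
\]
using $(-1)^{-p}=(-1)^p$. The key observation is that the relation $\alpha\beta=-1$ forces $\alpha^{-1}=-\beta$, hence $\alpha^{-q}=(-1)^q\beta^q$. Substituting this gives
\[
\alpha^q+(-1)^p\alpha^{-q}=\alpha^q+(-1)^{p+q}\beta^q=\alpha^q+(-1)^{p-q}\beta^q,
\]
since $(-1)^{p+q}=(-1)^{p-q}$. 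Now I split into the two parity cases: if $p-q$ is even the bracket becomes $\alpha^q+\beta^q=L_q$, and if $p-q$ is odd it becomes $\alpha^q-\beta^q=\sqrt 5\,F_q$. This is exactly \eqref{eq.z1yfyw6}.

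For \eqref{eq.xcamhlp} I would proceed identically, except that factoring out $(-1)^{p-1}\alpha^q=-(-1)^p\alpha^q$ produces
\[
1-(-1)^p\alpha^{2q}=(-1)^{p-1}\alpha^q\!\left(\alpha^q-(-1)^p\alpha^{-q}\right)=(-1)^{p-1}\alpha^q\!\left(\alpha^q-(-1)^{p-q}\beta^q\right),
\]
again via $\alpha^{-q}=(-1)^q\beta^q$. When $p-q$ is odd the bracket collapses to $\alpha^q+\beta^q=L_q$, and when $p-q$ is even it collapses to $\alpha^q-\beta^q=\sqrt 5\,F_q$, matching the two cases of \eqref{eq.xcamhlp}.

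The calculations are elementary; the only place to be careful is the sign bookkeeping when rewriting $\alpha^{-q}$ in terms of $\beta^q$ and when combining $(-1)^p$ with $(-1)^q$ to get the single parity condition on $p-q$ that governs the case split. Once that is handled, the Binet formulas do all the remaining work.
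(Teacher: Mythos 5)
Your argument is correct: factoring out $(-1)^p\alpha^q$ (resp.\ $(-1)^{p-1}\alpha^q$), rewriting $\alpha^{-q}=(-1)^q\beta^q$ via $\alpha\beta=-1$, and reading off $\alpha^q\pm\beta^q$ as $L_q$ or $\sqrt5\,F_q$ is exactly the natural verification, and the sign bookkeeping ($(-1)^{p+q}=(-1)^{p-q}$) is handled correctly in both parity cases. The paper itself states this lemma without proof, citing Adegoke's preprint, so your derivation simply supplies the omitted elementary computation.
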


The following formulas could be easily derived from the Binet formulas \eqref{binet}.
\begin{lemma} For $p$ and $q$ integers,
	\begin{align}
	&( - 1)^q  + \alpha ^{2q}  = \alpha ^q L_q, \qquad\quad ( - 1)^q  - \alpha ^{2q}  =  -\sqrt 5 \alpha ^q F_q , \label{eq.u9uuagc}\\
	&( - 1)^q  + \beta ^{2q}  = \beta ^q L_q, \qquad\quad ( - 1)^q  - \beta ^{2q}  =\sqrt 5 \beta ^q.\label{eq.syrjcay}
	\end{align}
\end{lemma}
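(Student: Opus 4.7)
The plan is to reduce everything to the single observation $\alpha\beta=-1$, which the authors have already recorded immediately after defining $\beta$. From this it follows that $(-1)^q=(\alpha\beta)^q=\alpha^q\beta^q$ for every integer $q$, and all four identities in \eqref{eq.u9uuagc}--\eqref{eq.syrjcay} then become a one-line factorization combined with the Binet formulas \eqref{binet}, namely $L_q=\alpha^q+\beta^q$ and $\sqrt 5\,F_q=\alpha^q-\beta^q$.

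Concretely, for the first identity I would write
\[
(-1)^q+\alpha^{2q}=\alpha^q\beta^q+\alpha^q\alpha^q=\alpha^q(\alpha^q+\beta^q)=\alpha^q L_q,
\]
and for the second
\[
(-1)^q-\alpha^{2q}=\alpha^q(\beta^q-\alpha^q)=-\alpha^q(\alpha^q-\beta^q)=-\sqrt 5\,\alpha^q F_q.
\]
The two identities in \eqref{eq.syrjcay} are obtained in exactly the same way, factoring out $\beta^q$ instead of $\alpha^q$; the symmetry $\alpha\leftrightarrow\beta$ makes them immediate once the first pair is established.

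There is essentially no obstacle: the only subtlety is handling negative $q$, but since the Binet formulas \eqref{binet} are stated for all $j\in\mathbb Z$ and $\alpha\beta=-1$ gives $(\alpha\beta)^q=(-1)^q$ for every integer $q$ (including negatives), the argument goes through uniformly, with no need to split into cases on the parity or sign of $q$. Thus the lemma follows directly from the defining properties of $\alpha$ and $\beta$, and the proof amounts to the two displayed factorizations above together with their $\beta$-analogues.
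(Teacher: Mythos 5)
Your proof is correct and is precisely the argument the paper intends: the lemma is justified there only by the remark that it "could be easily derived from the Binet formulas," and your factorization via $(-1)^q=(\alpha\beta)^q=\alpha^q\beta^q$ together with $L_q=\alpha^q+\beta^q$ and $\sqrt5\,F_q=\alpha^q-\beta^q$ is exactly that derivation, valid for all integers $q$. Note that your computation of the fourth identity correctly yields $(-1)^q-\beta^{2q}=\sqrt5\,\beta^qF_q$, which reveals that the factor $F_q$ is missing from the statement as printed in the paper (a typographical error there, not a flaw in your argument).
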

\begin{lemma}[V.E.~Hoggatt, Jr. et al. \cite{hoggatt71}] \label{lem.ydalnfx}
	For $p$ and $q$ integers,
	\begin{align*}
	&L_{p + q}  - L_p \alpha ^q  =  -  \sqrt 5\beta ^p F_q,\qquad\quad L_{p + q}  - L_p \beta ^q  =  \sqrt 5\alpha ^p F_q ,\\
	&F_{p + q}  - F_p \alpha ^q  = \beta ^p F_q,\qquad \qquad\quad
	F_{p + q}  - F_p \beta ^q  = \alpha ^p F_q.
	\end{align*}
\end{lemma}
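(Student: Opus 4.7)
The plan is to verify each of the four identities by direct substitution from the Binet formulas \eqref{binet}, exploiting the fact that multiplication by $\alpha^q$ (resp.\ $\beta^q$) selectively annihilates the $\alpha$-part (resp.\ $\beta$-part) when subtracted from $L_{p+q}$ or $F_{p+q}$.

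For the first identity, I would write
\[
L_{p+q} - L_p\alpha^q = (\alpha^{p+q} + \beta^{p+q}) - (\alpha^p+\beta^p)\alpha^q = \beta^{p+q} - \beta^p\alpha^q = \beta^p(\beta^q - \alpha^q),
\]
and then recognize $\beta^q - \alpha^q = -(\alpha - \beta) F_q = -\sqrt 5\, F_q$ to obtain $-\sqrt 5\,\beta^p F_q$. The second identity is obtained by the symmetric calculation with the roles of $\alpha$ and $\beta$ interchanged, which cancels the $\beta^{p+q}$ term instead and leaves $\alpha^p(\alpha^q - \beta^q) = \sqrt 5\,\alpha^p F_q$.

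For the Fibonacci pair, I would proceed analogously: writing
\[
F_{p+q} - F_p\alpha^q = \frac{\alpha^{p+q} - \beta^{p+q}}{\alpha - \beta} - \frac{\alpha^p - \beta^p}{\alpha - \beta}\alpha^q = \frac{-\beta^{p+q} + \beta^p\alpha^q}{\alpha - \beta} = \frac{\beta^p(\alpha^q - \beta^q)}{\alpha - \beta} = \beta^p F_q,
\]
and the last identity follows by swapping $\alpha \leftrightarrow \beta$.

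Since each identity reduces to a one-line manipulation of Binet expansions, there is no genuine obstacle here; the only thing to be careful about is keeping the signs straight, in particular remembering that $\beta^q - \alpha^q = -\sqrt 5\, F_q$ while $\alpha^q - \beta^q = +\sqrt 5\, F_q$, so that the asymmetry between the two Lucas identities (minus sign versus plus sign in front of $\sqrt 5$) emerges correctly.
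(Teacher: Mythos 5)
Your verification is correct: each of the four identities does reduce to the one-line cancellation you describe, and the signs (in particular $\alpha^q-\beta^q=\sqrt5\,F_q$ versus $\beta^q-\alpha^q=-\sqrt5\,F_q$) all come out right. The paper itself gives no proof---it simply cites Hoggatt et al.---so your direct Binet-formula computation is exactly the intended elementary argument and nothing more needs to be said.
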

\begin{lemma}\label{lem.h02mtu1} We have
	\begin{gather}
	1 - \alpha = \beta ,\qquad 1 - \beta = \alpha ,\qquad 1 + \alpha = \alpha^2 ,\qquad 1 + \beta = \beta^2,
	\label{eq.uorwakj}\\
	1 - \alpha^3 = - 2\alpha ,\qquad 1 - \beta^3 = - 2\beta,\qquad 1 + \alpha^3 = 2\alpha^2 ,\qquad 1 + \beta^3 = 2\beta^2,
	\label{eq.vi8gjsp}\\
	1 - 2\alpha = - \sqrt 5 ,\qquad 1 - 2\beta = \sqrt 5 ,\qquad 1 + 2\alpha = \alpha^3 ,\qquad 1 + 2\beta = \beta^3,
	\label{eq.ss9cyu8}\\
	2 - \alpha = \beta^2 ,\qquad 2 - \beta = \alpha^2,\qquad 2 + \alpha = \alpha\sqrt 5 ,\qquad 2 + \beta = -\beta\sqrt 5,
	\label{eq.dirp56v}\\
		1 - \alpha^3\sqrt 5 = - 2\alpha^3 ,\quad\, 1 - \beta^3\sqrt 5 = 4\beta^2,\quad\, 1 + \alpha^3\sqrt 5 = 4\alpha^2, \quad\, 1 + \beta^3\sqrt 5 = -2\beta^3,
	\label{eq.ta648o1}\\
			\sqrt 5 - \alpha^3 = - 2,\qquad\sqrt 5 - \beta^3 = - 4\beta ,\qquad\sqrt 5 + \alpha^3 = 4\alpha ,\,\quad\sqrt 5 + \beta^3 = 2,
	\label{eq.ra1lsob}\\
		3 - \alpha^3 = 2\beta ,\qquad 3 - \beta^3 = 2\alpha,\qquad 3 + \alpha^3 = 2\alpha \sqrt 5 ,\qquad 3 + \beta^3 = - 2\beta \sqrt 5
	\label{eq.bt93a3y},\\
	1 - 3\alpha^3 = -2\alpha^2\sqrt 5 ,\qquad 1 - 3\beta^3 = 2\beta^2\sqrt 5,\qquad 
	1 + 3\alpha ^3 = 2\alpha^4 ,\qquad 1 + 3\beta^3 = 2\beta^4.
	\label{eq.lapedpo}
	\end{gather}
\end{lemma}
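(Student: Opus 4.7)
The plan is to reduce every quantity appearing in Lemma~\ref{lem.h02mtu1} to a $\mathbb Q$-linear (or $\mathbb Q(\sqrt 5)$-linear) combination of $1$ and $\alpha$ (respectively $1$ and $\beta$), and then read off each identity by comparing the two sides coefficient-wise. All of the work is driven by the four elementary relations
\begin{equation*}
\alpha+\beta=1,\qquad \alpha\beta=-1,\qquad \alpha^{2}=\alpha+1,\qquad \beta^{2}=\beta+1,
\end{equation*}
together with the consequence $\alpha-\beta=\sqrt 5$, equivalently $\sqrt 5=2\alpha-1=1-2\beta$.

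Before starting the list, I would compute once and for all the reductions that will be reused throughout: multiplying $\alpha^{2}=\alpha+1$ by $\alpha$ gives $\alpha^{3}=2\alpha+1$ and then $\alpha^{4}=3\alpha+2$, and dually $\beta^{3}=2\beta+1$, $\beta^{4}=3\beta+2$. From $\sqrt 5=2\alpha-1$ one obtains $\alpha\sqrt 5=2\alpha^{2}-\alpha=\alpha+2$, and hence $\alpha^{2}\sqrt 5=\alpha\sqrt 5+\sqrt 5=3\alpha+1$ and $\alpha^{3}\sqrt 5=(2\alpha+1)\sqrt 5=4\alpha+3$; the parallel computations with $\sqrt 5=1-2\beta$ yield $\beta\sqrt 5=-(\beta+2)$, $\beta^{2}\sqrt 5=-(3\beta+1)\cdot(-1)=$ (after the sign check) an expression linear in $\beta$, and $\beta^{3}\sqrt 5=-(4\beta+3)$ up to the correct sign, which a direct calculation fixes.

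With this dictionary in hand, each of the eight displayed equations (\ref{eq.uorwakj})--(\ref{eq.lapedpo}) collapses to a trivial comparison. The first group (\ref{eq.uorwakj}) is immediate from $\alpha+\beta=1$ and $\alpha^{2}=\alpha+1$. Group (\ref{eq.vi8gjsp}) uses $\alpha^{3}=2\alpha+1$ once. Groups (\ref{eq.ss9cyu8}), (\ref{eq.dirp56v}), (\ref{eq.bt93a3y}) combine the powers of $\alpha$ (or $\beta$) with $\sqrt 5=2\alpha-1$ and $\alpha\sqrt 5=\alpha+2$. Groups (\ref{eq.ta648o1}), (\ref{eq.ra1lsob}), (\ref{eq.lapedpo}) additionally call on the reductions of $\alpha^{2}\sqrt 5$ and $\alpha^{3}\sqrt 5$ (and their $\beta$-counterparts) just recorded. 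In every case both sides end up in the form $a\alpha+b$ (resp.\ $a\beta+b$) with $a,b\in\mathbb Q$, and equality is visible at a glance. There is no conceptual obstacle; the only thing to watch is the pattern of signs in the $\beta$-identities, which flip relative to the $\alpha$-identities because $\beta\sqrt 5=-(\beta+2)$ whereas $\alpha\sqrt 5=+(\alpha+2)$. A disciplined tabulation of the reductions above eliminates the risk of sign slips.
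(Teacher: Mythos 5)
Your proof is correct, but it takes a genuinely different route from the paper. The paper dispatches all eight groups in one line by specializing the Hoggatt identities of Lemma~\ref{lem.ydalnfx} (e.g., $F_{p+q}-F_p\alpha^q=\beta^pF_q$ with $p=q=1$ gives $1-\alpha=\beta$; $L_{p+q}-L_p\alpha^q=-\sqrt5\,\beta^pF_q$ with suitable small $p,q$ gives the $\sqrt5$-identities), so the lemma is presented as a corollary of machinery already in place. You instead work directly in $\mathbb Z[\alpha]$, reducing every power of $\alpha$, $\beta$ and every product with $\sqrt 5$ to the normal form $a\alpha+b$ (resp.\ $a\beta+b$) via $\alpha^2=\alpha+1$ and $\sqrt5=2\alpha-1=1-2\beta$, and then comparing coefficients. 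Your dictionary checks out: $\alpha^3=2\alpha+1$, $\alpha^4=3\alpha+2$, $\alpha\sqrt5=\alpha+2$, $\alpha^2\sqrt5=3\alpha+1$, $\alpha^3\sqrt5=4\alpha+3$, and on the $\beta$ side $\beta\sqrt5=-(\beta+2)$, $\beta^2\sqrt5=-(3\beta+1)$, $\beta^3\sqrt5=-(4\beta+3)$; with these, every displayed identity reduces to an equality of linear polynomials, and I have verified all eight groups. (Your parenthetical ``$-(3\beta+1)\cdot(-1)$'' for $\beta^2\sqrt5$ has the sign backwards --- the correct value is $-(3\beta+1)$ --- but since you explicitly defer to a direct sign check and the final identities require exactly that value, this is a slip in the sketch rather than a gap.) Your approach is more elementary and self-contained, needing nothing beyond the minimal polynomial of $\alpha$; the paper's approach buys brevity and exhibits the lemma as an instance of a single uniform family of identities.
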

\begin{proof} Each identity is obtained by making appropriate substitutions for $p$ and $q$ in the identities given in Lemma \ref{lem.ydalnfx}.
\end{proof}

\section{Binomial summation identities, Part 1}

The first two key identities used frequently in this part of the paper are stated in the next fundamental lemma.
\begin{lemma}
	For integers $j$, $r$, $s$ and $n$ with $n$ non-negative, we have the following identities
	\begin{equation}
	\label{eq.upvt5hu}
	\begin{split}
	2\sqrt 5 \sum_{k = 0}^{\left\lfloor {n}/{2} \right\rfloor } \binom n{2k} &x^{n - 2k} z^{2k} F_{j(2rk + s)} \\
	&= \alpha ^{js}\left( (x + \alpha ^{jr} z)^n + (x - \alpha ^{jr} z)^n\right)
	- \beta ^{js}\left( (x + \beta ^{jr} z)^n  + (x - \beta ^{jr} z)^n\right),
	\end{split}
	\end{equation}
	\begin{equation} \label{eq.b7rf05c}
	\begin{split}
	2\sum_{k = 0}^{\left\lfloor {n}/{2} \right\rfloor } \binom n{2k}&x^{n - 2k} z^{2k} L_{j(2rk + s)} \\
	&= \alpha ^{js} \left((x + \alpha ^{jr} z)^n  + (x - \alpha ^{jr} z)^n\right)
	+ \beta ^{js} \left((x + \beta ^{jr} z)^n  + (x - \beta ^{jr} z)^n\right).
	\end{split}
	\end{equation}
\end{lemma}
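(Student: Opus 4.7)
The strategy is to substitute the Binet formulas \eqref{binet} directly into the left-hand sides, factor out the $s$-shift, and then recognize the remaining sum as the even-index part of a binomial expansion.

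Concretely, I would first write
$$F_{j(2rk+s)} = \frac{\alpha^{js}(\alpha^{jr})^{2k} - \beta^{js}(\beta^{jr})^{2k}}{\sqrt 5}, \qquad L_{j(2rk+s)} = \alpha^{js}(\alpha^{jr})^{2k} + \beta^{js}(\beta^{jr})^{2k},$$
so that all the $k$-dependence is concentrated in the clean factor $(\alpha^{jr})^{2k}$ or $(\beta^{jr})^{2k}$. Inserting these into each left-hand side and pulling the $k$-independent prefactors $\alpha^{js}$ and $\beta^{js}$ outside the summation reduces the problem to evaluating
$$S(w) \;:=\; \sum_{k=0}^{\lfloor n/2 \rfloor}\binom{n}{2k}\,x^{n-2k}\,w^{2k}$$
at $w = \alpha^{jr}z$ and $w = \beta^{jr}z$.

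The second step is the standard even-part trick: adding the binomial expansions of $(x+w)^n$ and $(x-w)^n$ kills the odd-index terms, yielding the closed form $(x+w)^n + (x-w)^n = 2S(w)$. Substituting $w = \alpha^{jr}z$ and $w = \beta^{jr}z$ into this formula and reassembling, with the minus sign dictated by Binet for $F$ and the plus sign for $L$, produces exactly the right-hand sides of \eqref{eq.upvt5hu} and \eqref{eq.b7rf05c}; the factor $2$ coming from the even-part identity accounts for the overall factors of $2\sqrt 5$ and $2$ on the left-hand sides.

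There is essentially no genuine obstacle: the proof is a direct unwinding of Binet plus the binomial theorem. The one bookkeeping point worth flagging is that the $s$-shift must be factored out of the summation before invoking the even-index binomial identity, because otherwise the $k$-dependence in the exponents of $\alpha$ and $\beta$ no longer appears as a single power $(\alpha^{jr})^{2k}$ and the binomial structure is obscured. One may also observe that the same identities can be deduced from the Adegoke formulas \eqref{eq.y4g4no6}--\eqref{eq.r2vyn7u} applied to $h(z) = \tfrac{1}{2}\bigl((x+z)^n + (x-z)^n\bigr)$ with the parameter $j$ replaced by $jr$, combined afterwards with an $s$-shift via the Vajda relations of Lemma \ref{lem.crcmndb}; but the direct Binet derivation above is the most transparent route.
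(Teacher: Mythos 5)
Your proof is correct and is essentially the paper's argument: the authors route the same computation through the transfer identities \eqref{eq.y4g4no6}--\eqref{eq.r2vyn7u} applied to $h(z)=2\sum_k\binom{n}{2k}x^{n-2k}z^{2rk+s}=z^s(x+z^r)^n+z^s(x-z^r)^n$, which is just the Binet substitution you perform by hand, combined with the same even-part binomial identity $(x+w)^n+(x-w)^n=2\sum_k\binom{n}{2k}x^{n-2k}w^{2k}$. Your direct version is, if anything, cleaner, since it avoids the cosmetic rescaling needed to turn the $z^{2rk+s}$ weight of the paper's $h$ into the $z^{2k}$ weight appearing in the stated lemma.
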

\begin{proof} In the identity
	\begin{equation}\label{eq.b5o7ghm}
	h (z) = 2\sum_{k = 0}^{\left\lfloor {n}/{2} \right\rfloor } {\binom n{2k}x^{n-2k}z^{2rk + s} }  = z^s (x + z^r )^n  + z^s (x - z^r )^n
	\end{equation}
	identify $g(k)=2\binom n{2k}x^{n-2k}$, $f(k)=2rk+s$, $c_1=0$, $c_2=\left\lfloor {n}/2 \right\rfloor$, 
	and use these in \eqref{eq.y4g4no6},  \eqref{eq.r2vyn7u}.
\end{proof}
\begin{theorem}\label{thm.rz6m834}
	For non-negative integer $n$ and any integers $s$ and $j$,
	\begin{align}
	2\sum_{k = 0}^n {\binom{2n}{2k}F_{j(4k + s)} }& = \left( {L_j^{2n}  + 5^n F_j^{2n} } \right)F_{j(2n + s)},\label{eq.yqfik8x}\\
	2\sum_{k = 0}^n \binom{2n}{2k}L_{j(4k + s)}  & = \left( {L_j^{2n}  + 5^n F_j^{2n} } \right)L_{j(2n + s)},\label{eq.hrs065m}
	\end{align}
\vspace{-14 pt}
	\begin{align}
	2\sum_{k = 0}^{n - 1} \binom{2n-1}{2k}&F_{j(4k + s)}= ( - 1)^j\big(L_j^{2n - 1} F_{j(2n + s - 1)}  -  5^{n - 1} F_j^{2n - 1} L_{j(2n + s - 1)}\big),\label{eq.dv1m3af}\\
		2\sum_{k = 0}^{n - 1} \binom{2n-1}{2k} &L_{j(4k + s)}   =( - 1)^j\big( L_j^{2n - 1} L_{j(2n + s - 1)}  -  5^n F_j^{2n - 1} F_{j(2n + s - 1)}\big )\label{eq.blds4xe}. 
	\end{align}
\end{theorem}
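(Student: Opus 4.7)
I would prove all four identities in one stroke by applying the master identities \eqref{eq.upvt5hu} and \eqref{eq.b7rf05c} with $r=2$ and $x=z=1$. Replacing $n$ by $2n$ yields the sums in \eqref{eq.yqfik8x} and \eqref{eq.hrs065m}, while replacing $n$ by $2n-1$ yields the sums in \eqref{eq.dv1m3af} and \eqref{eq.blds4xe}. Under these substitutions $j(2rk+s)=j(4k+s)$ and the upper limit $\lfloor m/2\rfloor$ becomes $n$ or $n-1$, so the left-hand sides match the theorem verbatim (after dividing by $\sqrt5$ in the $F$-cases). The whole computation then reduces to evaluating $(1+\alpha^{2j})^m+(1-\alpha^{2j})^m$ and its $\beta$-analogue at $m=2n$ and $m=2n-1$.

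Lemma~\ref{lem.dgcin1i} (with $p=0$, $q=j$) expresses $1\pm\alpha^{2j}$ in closed form, with a case split on the parity of $j$: for $j$ even, $1+\alpha^{2j}=\alpha^jL_j$ and $1-\alpha^{2j}=-\alpha^j\sqrt5\,F_j$; for $j$ odd, $1+\alpha^{2j}=\alpha^j\sqrt5\,F_j$ and $1-\alpha^{2j}=-\alpha^jL_j$. The Binet formulas yield parallel identities for $1\pm\beta^{2j}$, but with the signs of the $\sqrt5\,\beta^jF_j$ terms reversed relative to the $\alpha$-side. The crucial observation is that raising to the even power $m=2n$ annihilates the parity dependence entirely, producing the uniform identity
\[ (1+\alpha^{2j})^{2n}+(1-\alpha^{2j})^{2n}=\alpha^{2jn}\bigl(L_j^{2n}+5^nF_j^{2n}\bigr), \]
with the same statement holding for $\beta$. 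Raising to the odd power $m=2n-1$ instead flips exactly one sign between the two parity cases, so they collapse into the uniform expression $(-1)^j\alpha^{j(2n-1)}\bigl(L_j^{2n-1}-5^{n-1}\sqrt5\,F_j^{2n-1}\bigr)$ on the $\alpha$-side, with the $\beta$-analogue differing only by having $+5^{n-1}\sqrt5$ in place of $-5^{n-1}\sqrt5$.

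Substituting these back into \eqref{eq.upvt5hu} and \eqref{eq.b7rf05c}, the shifts $\alpha^{js}\alpha^{2jn}=\alpha^{j(2n+s)}$ (respectively $\alpha^{js}\alpha^{j(2n-1)}=\alpha^{j(2n+s-1)}$ in the odd case) appear, and the Binet pairings $\alpha^M-\beta^M=\sqrt5\,F_M$ and $\alpha^M+\beta^M=L_M$ reconstitute the Fibonacci and Lucas terms on the right-hand sides; in the $F$-identities a single factor of $\sqrt5$ cancels between both sides. The only technical obstacle is the parity-of-$j$ bookkeeping in the odd-exponent case, but this is essentially cosmetic: for even exponents the two cases coincide outright, and for odd exponents they differ only by the uniform factor $(-1)^j$ that already appears on the right-hand sides of \eqref{eq.dv1m3af} and \eqref{eq.blds4xe}.
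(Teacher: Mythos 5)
Your proof is correct and follows essentially the same route as the paper: both specialize the master identities \eqref{eq.upvt5hu} and \eqref{eq.b7rf05c} at $r=2$, $z=1$, split on the parity of the outer exponent, and finish with the Binet formulas. The only (cosmetic) difference is that the paper takes $x=(-1)^j$ so that \eqref{eq.u9uuagc} and \eqref{eq.syrjcay} apply with no case split on the parity of $j$, whereas you take $x=1$ and let the two parity cases of Lemma \ref{lem.dgcin1i} collapse --- which, as you correctly verify, they do.
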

\begin{proof} In \eqref{eq.upvt5hu}, set $x=(-1)^j$, $z=1$ and $r=2$ and use \eqref{eq.u9uuagc}, \eqref{eq.syrjcay} to obtain
	\[
	2\sqrt 5 \sum_{k = 0}^{\left\lfloor n/2 \right\rfloor } {\binom n{2k}F_{j(4k + s)} }= L_j^n \bigl( {\alpha ^{j(n + s)}  - \beta ^{j(n + s)} } \bigr)+ (-\sqrt 5 )^n F_j^n \bigl( {\alpha ^{j(n + s)}  - ( - 1)^n \beta ^{j(n + s)} } \bigr),
	\]
	from which \eqref{eq.yqfik8x} and \eqref{eq.dv1m3af} follow from the parity of $n$ and the Binet formulas.
	
	 The proof of \eqref{eq.hrs065m}, \eqref{eq.blds4xe} is similar; use $x=(-1)^j$, $z=1$ and $r=2$ in \eqref{eq.b7rf05c}.
\end{proof}
\begin{theorem} For non-negative integer $n$ and any integer $s$, we have 
	\begin{align}
	2\sum_{k = 0}^{\left\lfloor {n}/2 \right\rfloor } {\binom n{2k}F_{2k + s} }& = F_{2n + s} - (- 1)^s F_{n - s},\label{eq.ffgnfzm}\\  
	2\sum_{k = 0}^{\left\lfloor {n}/2 \right\rfloor } {\binom n{2k}L_{2k + s} } &= L_{2n + s} + (- 1)^s L_{n - s}. \label{eq.j0dzhsw}
	\end{align}
\end{theorem}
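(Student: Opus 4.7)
The plan is to specialize the fundamental lemma \eqref{eq.upvt5hu}--\eqref{eq.b7rf05c} with the simplest possible parameter choice that produces the index $2k+s$ on the Fibonacci/Lucas side, namely $j=1$, $r=1$, $x=1$, $z=1$. This reduces the left-hand sides of \eqref{eq.upvt5hu} and \eqref{eq.b7rf05c} to exactly the sums we are after, so the task becomes evaluating the resulting right-hand sides in closed form.

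After the substitution, the right-hand side involves $(1+\alpha)^n$, $(1-\alpha)^n$, $(1+\beta)^n$, $(1-\beta)^n$, and these collapse immediately by the very first line of Lemma \ref{lem.h02mtu1}: $1+\alpha=\alpha^2$, $1-\alpha=\beta$, $1+\beta=\beta^2$, $1-\beta=\alpha$. Thus \eqref{eq.upvt5hu} and \eqref{eq.b7rf05c} become
\begin{align*}
2\sqrt 5 \sum_{k=0}^{\lfloor n/2\rfloor}\binom{n}{2k}F_{2k+s} &= \alpha^s(\alpha^{2n}+\beta^n)-\beta^s(\beta^{2n}+\alpha^n),\\
2\sum_{k=0}^{\lfloor n/2\rfloor}\binom{n}{2k}L_{2k+s} &= \alpha^s(\alpha^{2n}+\beta^n)+\beta^s(\beta^{2n}+\alpha^n).
\end{align*}

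From here I would regroup the four resulting monomials into the pair that is a pure power difference/sum in $\alpha,\beta$ and the pair that is mixed. The pure powers $\alpha^{2n+s}\pm\beta^{2n+s}$ give $\sqrt{5}F_{2n+s}$ or $L_{2n+s}$ by Binet \eqref{binet}. The mixed terms factor via $\alpha\beta=-1$:
\[
\alpha^s\beta^n\pm\alpha^n\beta^s=(\alpha\beta)^s\bigl(\beta^{n-s}\pm\alpha^{n-s}\bigr)=(-1)^s\bigl(\beta^{n-s}\pm\alpha^{n-s}\bigr),
\]
which is $-(-1)^s\sqrt{5}F_{n-s}$ in the minus case and $(-1)^sL_{n-s}$ in the plus case. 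Dividing the Fibonacci line by $\sqrt 5$ then yields \eqref{eq.ffgnfzm} and \eqref{eq.j0dzhsw} directly.

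There is no real obstacle here — the only point requiring a moment of care is bookkeeping the sign when extracting $(\alpha\beta)^s=(-1)^s$ from the mixed terms, and checking that the identities from Lemma \ref{lem.h02mtu1} used in simplifying $(1\pm\alpha)^n$ and $(1\pm\beta)^n$ are applied uniformly, so that no extra parity case-analysis on $n$ is needed (in contrast to the proof of Theorem \ref{thm.rz6m834}, where the $r=2$ substitution did produce an $n$-parity split).
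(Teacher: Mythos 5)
Your proof is correct and follows exactly the paper's route: substitute $x=z=j=r=1$ into \eqref{eq.upvt5hu}--\eqref{eq.b7rf05c}, collapse $(1\pm\alpha)^n$ and $(1\pm\beta)^n$ via \eqref{eq.uorwakj}, and finish with $(\alpha\beta)^s=(-1)^s$ and the Binet formulas. No gaps; the sign bookkeeping on the mixed terms is handled correctly.
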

\begin{proof} Set $x=z=j=r=1$ in \eqref{eq.upvt5hu} and \eqref{eq.b7rf05c}, and use \eqref{eq.uorwakj}. This gives
	\begin{align*}
	2\sqrt 5 \sum_{k = 0}^{\left\lfloor {n}/{2} \right\rfloor} \binom {n}{2k}F_{2k + s} &= \alpha^{2n + s} - \beta^{2n + s}
	- (\alpha \beta )^s (\alpha^{n - s} - \beta^{n - s}),\\
	2\sum_{k = 0}^{\left\lfloor {n}/2 \right\rfloor} \binom {n}{2k}L_{2k + s} &= \alpha^{2n + s} + \beta^{2n + s}
	+ (\alpha \beta )^s (\alpha^{n - s}  + \beta^{n - s}),
	\end{align*}
	from which the stated identities follow immediately from the Binet formulas.
\end{proof}
\begin{corollary} For non-negative integer $n$ and any integer $s$,
	\begin{align*}
	2\sum_{k = 0}^n {\binom{2n}{2k}F_{2k + s} }  = &
	\begin{cases}
	L_{n + s} F_{3n}, & \text{\rm $n$ odd;}\\
	F_{n + s} L_{3n}, & \text{\rm  $n$ even,}
	\end{cases}\\
	2\sum_{k = 0}^n {\binom{2n}{2k}L_{2k + s} }  = &
	\begin{cases}
	5F_{n + s} F_{3n}, & \text{\rm  $n$ odd;}\\
	L_{n + s} L_{3n}, & \text{\rm  $n$ even.}
	\end{cases}
	\end{align*}
\end{corollary}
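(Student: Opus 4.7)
The plan is to deduce the corollary directly from identities \eqref{eq.ffgnfzm} and \eqref{eq.j0dzhsw} by the substitution $n \mapsto 2n$. Note that since $2n$ is even, $\lfloor 2n/2\rfloor = n$, so the sums become
\[
2\sum_{k=0}^{n}\binom{2n}{2k}F_{2k+s} = F_{4n+s} - (-1)^s F_{2n-s}, \qquad 2\sum_{k=0}^{n}\binom{2n}{2k}L_{2k+s} = L_{4n+s} + (-1)^s L_{2n-s}.
\]

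Next, I would apply the ``useful versions'' of Vajda's identities \eqref{eq.t3h2005}, \eqref{eq.iix4fk6} to the right-hand sides, with $u = 4n+s$ and $v = 2n-s$ (which share the same parity). Then $(u+v)/2 = 3n$ and $(u-v)/2 = n+s$, so the sign controlling the reduction is $(-1)^{n+s}$. The point is that whether this sign agrees with $(-1)^s$ or with $-(-1)^s$ depends precisely on the parity of $n$, which is exactly the case split that appears in the statement.

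For the Fibonacci sum, if $n$ is even then $(-1)^{n+s} = (-1)^s$, so the right-hand side reads $F_u - (-1)^{(u-v)/2}F_v = F_{n+s}L_{3n}$, while if $n$ is odd then $-(-1)^s = (-1)^{n+s}$, giving $F_u + (-1)^{(u-v)/2}F_v = L_{n+s}F_{3n}$. The Lucas sum is treated analogously: for $n$ even, $L_u + (-1)^{(u-v)/2}L_v = L_{n+s}L_{3n}$; for $n$ odd, $L_u - (-1)^{(u-v)/2}L_v = 5F_{n+s}F_{3n}$.

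There is really no obstacle here beyond careful bookkeeping of the sign $(-1)^{n+s}$ versus $(-1)^s$, since this entire corollary is a direct specialization of the preceding theorem followed by one application of Lemma \ref{lem.crcmndb}. The main thing to get right is matching the parity condition on $(u-v)/2 = n+s$ with the parity condition on $n$ alone, which works because shifting by $s$ toggles both sides of the equation $(-1)^s = (-1)^{n+s}$ in the same way.
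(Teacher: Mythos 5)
Your proposal is correct and is exactly the paper's argument: substitute $2n$ for $n$ in \eqref{eq.ffgnfzm} and \eqref{eq.j0dzhsw}, then reduce $F_{4n+s}\mp(-1)^sF_{2n-s}$ and $L_{4n+s}+(-1)^sL_{2n-s}$ via Lemma \ref{lem.crcmndb}, with the case split coming from comparing $(-1)^{n+s}$ to $(-1)^s$. The sign bookkeeping in all four cases checks out.
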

\begin{proof} Write $2n$ for $n$ in each of the identities~\eqref{eq.ffgnfzm} and \eqref{eq.j0dzhsw} and use Lemma~\ref{lem.crcmndb}.
\end{proof}
\begin{corollary} For positive integer $n$,
	\begin{align*}
	2\sum_{k = 0}^{n - 1} {\binom{2n - 1}{2k}F_{2k} } & = 
	\begin{cases}
	5F_{n - 1} F_{n} F_{2n - 1}, & \text{\rm \rm $n$ odd;}\\
	L_{n - 1} L_{n} F_{2n - 1}, & \text{\rm\rm $n$ even,}
	\end{cases}
	\\
	2\sum_{k = 0}^{n - 1} {\binom{2n - 1}{2k}L_{2k} } & = L_{4n - 2}  + L_{2n - 1}.
	\end{align*}
\end{corollary}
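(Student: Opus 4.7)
The plan is to specialize the two identities \eqref{eq.ffgnfzm} and \eqref{eq.j0dzhsw} of the preceding theorem by writing $2n - 1$ for $n$ and setting $s = 0$. Since $\lfloor (2n-1)/2 \rfloor = n-1$, this gives directly
\[
2\sum_{k = 0}^{n - 1} \binom{2n - 1}{2k} F_{2k} = F_{4n - 2} - F_{2n - 1}, \qquad 2\sum_{k = 0}^{n - 1} \binom{2n - 1}{2k} L_{2k} = L_{4n - 2} + L_{2n - 1},
\]
which already establishes the Lucas case of the corollary.

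For the Fibonacci case I would next factor out $F_{2n-1}$. Using the well-known doubling identity $F_{2m} = F_m L_m$ (easily obtained from \eqref{binet}, or from Lemma~\ref{lem.crcmndb} with $r = s = m$), one gets $F_{4n-2} = F_{2n-1} L_{2n-1}$, hence
\[
2\sum_{k = 0}^{n - 1} \binom{2n - 1}{2k} F_{2k} = F_{2n - 1}\bigl(L_{2n - 1} - 1\bigr).
\]

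It then remains to evaluate $L_{2n-1} - 1$ according to the parity of $n$. To do this I would apply the parity-version of Lemma~\ref{lem.crcmndb} with $u = 2n-1$ and $v = 1$ (both odd), so that $(u-v)/2 = n - 1$ and $(u+v)/2 = n$. The formulas
\[
L_u - L_v = \begin{cases} L_{(u-v)/2} L_{(u+v)/2}, & (u-v)/2 \text{ odd;}\\ 5F_{(u-v)/2} F_{(u+v)/2}, & (u-v)/2 \text{ even,}\end{cases}
\]
yield $L_{2n-1} - 1 = L_{n-1} L_n$ when $n - 1$ is odd (i.e., $n$ even) and $L_{2n-1} - 1 = 5 F_{n-1} F_n$ when $n - 1$ is even (i.e., $n$ odd). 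Multiplying by $F_{2n-1}$ gives exactly the two cases of the corollary.

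There is no real obstacle here; every ingredient is either already in the excerpt (Theorem via \eqref{eq.ffgnfzm}, \eqref{eq.j0dzhsw} and Lemma~\ref{lem.crcmndb}) or an immediate consequence of the Binet formulas. The only small care point is matching the parity conditions correctly when passing from $(u-v)/2 = n-1$ to statements about $n$.
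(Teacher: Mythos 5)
Your proof is correct and follows the paper's own route: the paper simply says to replace $n$ by $2n-1$ in \eqref{eq.ffgnfzm} and \eqref{eq.j0dzhsw} and simplify, and your factorization $F_{4n-2}-F_{2n-1}=F_{2n-1}(L_{2n-1}-L_1)$ together with the parity version of Lemma~\ref{lem.crcmndb} is exactly the intended simplification. The parity bookkeeping ($n-1$ odd iff $n$ even) is handled correctly.
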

\begin{proof}
	Replace $n$ by $2n-1$ in each of the identities \eqref{eq.ffgnfzm} and \eqref{eq.j0dzhsw} and simplify.
\end{proof}
\begin{theorem}
	For non-negative integer $n$ and any integer $s$,
	\begin{align}
	\sum_{k = 0}^{\left\lfloor {n}/{2} \right\rfloor } \binom {n}{2k} F_{6k + s}&= 2^{n-1} \big(F_{2n+s} + (-1)^n F_{n+s}\big), \label{F_id_new1} \\
	\sum_{k = 0}^{\left\lfloor {n}/{2} \right\rfloor } \binom {n}{2k} L_{6k + s}&= 2^{n-1} \big(L_{2n+s} + (-1)^n L_{n+s}\big) \label{L_id_new1}.
	\end{align}
\end{theorem}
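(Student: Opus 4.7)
The plan is to specialize the fundamental lemma (identities \eqref{eq.upvt5hu} and \eqref{eq.b7rf05c}) with the parameter choices $x=z=1$, $j=1$, $r=3$, which matches the index $6k+s = 2rk+s$ appearing on the left-hand side of each identity. After this substitution, the four quantities that need to be simplified are $1\pm\alpha^3$ and $1\pm\beta^3$, and these are precisely the ones evaluated in \eqref{eq.vi8gjsp}: $1+\alpha^3 = 2\alpha^2$, $1-\alpha^3 = -2\alpha$, $1+\beta^3 = 2\beta^2$, $1-\beta^3 = -2\beta$.

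For the Fibonacci identity \eqref{F_id_new1}, the right-hand side of \eqref{eq.upvt5hu} becomes
\[
\alpha^s\bigl((2\alpha^2)^n + (-2\alpha)^n\bigr) - \beta^s\bigl((2\beta^2)^n + (-2\beta)^n\bigr)
= 2^n\bigl(\alpha^{2n+s} - \beta^{2n+s}\bigr) + (-1)^n 2^n\bigl(\alpha^{n+s}-\beta^{n+s}\bigr),
\]
which by the Binet formulas \eqref{binet} equals $2^n\sqrt{5}\bigl(F_{2n+s} + (-1)^n F_{n+s}\bigr)$. Dividing through by $2\sqrt{5}$ yields \eqref{F_id_new1}.

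For \eqref{L_id_new1}, the same substitutions applied to \eqref{eq.b7rf05c} give
\[
\alpha^s\bigl((2\alpha^2)^n + (-2\alpha)^n\bigr) + \beta^s\bigl((2\beta^2)^n + (-2\beta)^n\bigr)
= 2^n\bigl(L_{2n+s} + (-1)^n L_{n+s}\bigr),
\]
and dividing by $2$ gives the stated identity. There is no genuine obstacle here — once the correct specialization $(x,z,j,r) = (1,1,1,3)$ is recognized, the proof reduces to the already-tabulated evaluations in \eqref{eq.vi8gjsp} and a one-line application of Binet. The only step requiring any care is tracking the sign $(-1)^n$ that arises from $(-2\alpha)^n$ and $(-2\beta)^n$, which cleanly produces the two cases corresponding to the parity of $n$ on the right-hand side.
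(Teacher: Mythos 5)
Your proof is correct and is exactly the paper's argument: the paper also sets $x=z=j=1$, $r=3$ in \eqref{eq.upvt5hu} and \eqref{eq.b7rf05c}, invokes \eqref{eq.vi8gjsp}, and simplifies via the Binet formulas. You have merely written out the "straightforward calculations" the paper omits, and they check out.
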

\begin{proof}
	Set $x=z=j=1$ and $r=3$ in \eqref{eq.upvt5hu} and \eqref{eq.b7rf05c}, use \eqref{eq.vi8gjsp} and simplify.
\end{proof}
\begin{corollary}
	For non-negative integer $n$ and any integer $s$, we have
	\begin{align}
	\sum_{k = 0}^n \binom {2n}{2k} F_{6k + s} & =  2^{2n-1} \begin{cases}
	L_{n} F_{3n+s}, & \text{\rm $n$ even;}\\
	F_{n} L_{3n+s}, & \text{\rm $n$ odd,}
	\end{cases}
	\nonumber\\
	\label{eq.rnor47h}
	\sum_{k = 0}^n \binom {2n}{2k} L_{6k + s} & = 2^{2n-1} 
	\begin{cases}
	L_{n} L_{3n+s}, & \text{\rm $n$ even;}\\ 5 F_{n} F_{3n+s}, & \text{\rm $n$ odd.} \end{cases}
	\end{align}
\end{corollary}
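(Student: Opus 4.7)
The plan is to reduce the corollary to the theorem by substituting $n \mapsto 2n$ in the identities \eqref{F_id_new1} and \eqref{L_id_new1}, and then to collapse the resulting two-term expressions on the right-hand sides via the Fibonacci/Lucas addition formulas of Lemma \ref{lem.crcmndb}.

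First, replacing $n$ by $2n$ in \eqref{F_id_new1} and \eqref{L_id_new1}, the floor $\lfloor 2n/2 \rfloor = n$ produces the correct summation range, the exponent $2^{n-1}$ becomes $2^{2n-1}$, and the sign $(-1)^n$ becomes $(-1)^{2n} = 1$. This yields
\begin{align*}
\sum_{k=0}^{n} \binom{2n}{2k} F_{6k+s} &= 2^{2n-1}\bigl(F_{4n+s} + F_{2n+s}\bigr),\\
\sum_{k=0}^{n} \binom{2n}{2k} L_{6k+s} &= 2^{2n-1}\bigl(L_{4n+s} + L_{2n+s}\bigr).
\end{align*}

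Next, I would apply the parity versions of the addition formulas stated just after Lemma \ref{lem.crcmndb}. Setting $u = 4n+s$ and $v = 2n+s$, note that $u$ and $v$ have the same parity, with $\frac{u-v}{2} = n$ and $\frac{u+v}{2} = 3n+s$. Consequently, for $F_u + F_v$ we get $L_n F_{3n+s}$ when $n$ is even and $F_n L_{3n+s}$ when $n$ is odd; and for $L_u + L_v$ we get $L_n L_{3n+s}$ when $n$ is even and $5F_n F_{3n+s}$ when $n$ is odd. Substituting these into the two displayed equations above gives exactly the claimed identities.

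There is no real obstacle here. The substitution $n \mapsto 2n$ is straightforward, and the only mild bookkeeping is to verify that the half-differences and half-sums land at the right Fibonacci/Lucas indices and that the parity conditions on $\frac{u-v}{2} = n$ match the case split in the statement. Since $F_u + F_v$ yields $L_{(u-v)/2} F_{(u+v)/2}$ precisely when $(u-v)/2$ is even, the cases in the corollary reproduce without modification.
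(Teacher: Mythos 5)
Your proposal is correct and is essentially the paper's own proof: substitute $2n$ for $n$ in \eqref{F_id_new1} and \eqref{L_id_new1}, then collapse $F_{4n+s}+F_{2n+s}$ and $L_{4n+s}+L_{2n+s}$ using the parity forms of Lemma \ref{lem.crcmndb} with $\frac{u-v}{2}=n$ and $\frac{u+v}{2}=3n+s$. The index and parity bookkeeping all checks out.
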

\begin{proof}
	Write $2n$ for $n$ in each of the identities \eqref{F_id_new1}, \eqref{L_id_new1} and use Lemma \ref{lem.crcmndb}.
\end{proof}
\begin{corollary}
	For positive integer $n$,
	\begin{align*}
	\sum_{k = 0}^{n - 1} \binom{2n - 1}{2k} F_{6k} &= 4^{n-1} 
	\begin{cases}
	5F_{n - 1} F_{n} F_{2n - 1}, & \text{\rm $n$ odd;}\\ L_{n - 1} L_{n} F_{2n - 1}, & \text{\rm $n$ even,} \end{cases}\\
	\sum_{k = 0}^{n - 1} \binom{2n - 1}{2k} L_{6k} &= 4^{n-1} (L_{4n - 2} - L_{2n - 1}).
	\end{align*}
\end{corollary}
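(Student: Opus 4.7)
The plan is to specialize identities \eqref{F_id_new1} and \eqref{L_id_new1} by replacing $n$ with $2n-1$ and setting $s=0$, then simplify the resulting expressions. Since $\lfloor (2n-1)/2\rfloor = n-1$ and $(-1)^{2n-1} = -1$, the Lucas identity yields immediately
\[
\sum_{k=0}^{n-1}\binom{2n-1}{2k}L_{6k} = 2^{2n-2}\bigl(L_{4n-2} + (-1)^{2n-1}L_{2n-1}\bigr) = 4^{n-1}\bigl(L_{4n-2} - L_{2n-1}\bigr),
\]
which is exactly the claimed formula, so nothing further is required in this case.

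For the Fibonacci sum, the same substitution produces $4^{n-1}(F_{4n-2} - F_{2n-1})$, and the task is to show that this equals the right-hand side of the claim. First I would factor $F_{4n-2} = F_{2(2n-1)} = F_{2n-1}L_{2n-1}$ (the doubling identity, a direct consequence of \eqref{eq.t3h2005}), which gives
\[
F_{4n-2} - F_{2n-1} = F_{2n-1}\bigl(L_{2n-1} - 1\bigr) = F_{2n-1}\bigl(L_{2n-1} - L_1\bigr).
\]
Next I would apply the ``useful version'' of $L_u - L_v$ stated after Lemma \ref{lem.crcmndb}, with $u = 2n-1$ and $v = 1$ (both odd, so the formula applies). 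Here $(u-v)/2 = n-1$ and $(u+v)/2 = n$, so the parity of $n-1$ controls the split: when $n$ is even (hence $n-1$ odd), $L_{2n-1} - 1 = L_{n-1}L_n$; when $n$ is odd (hence $n-1$ even), $L_{2n-1} - 1 = 5F_{n-1}F_n$. Multiplying through by $F_{2n-1}$ and by the prefactor $4^{n-1}$ recovers the two stated cases of the corollary.

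There is no genuine obstacle; the only step requiring any thought is the trick of writing $1 = L_1$ so that $L_{2n-1} - 1$ becomes a difference of two Lucas numbers of equal parity, which unlocks the parity-dependent factorization from the useful version. The rest is direct substitution and bookkeeping of signs and exponents.
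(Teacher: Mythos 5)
Your proposal is correct and follows exactly the route the paper intends: its proof is the one-line instruction ``replace $n$ by $2n-1$ in \eqref{F_id_new1} and \eqref{L_id_new1} and simplify,'' and you have simply supplied the omitted simplification, namely $F_{4n-2}-F_{2n-1}=F_{2n-1}(L_{2n-1}-L_1)$ followed by the parity-split factorization of $L_u-L_v$. All substitutions and parity cases check out (e.g.\ $n=2$ gives $24=4L_1L_2F_3$ and $56=4(L_6-L_3)$), so nothing needs to be added.
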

\begin{proof} Replace $n$ by $2n-1$ in each of the identities \eqref{F_id_new1} and \eqref{L_id_new1} and simplify.
\end{proof}
\begin{theorem} For non-negative integer $n$ and any integer $s$,
	\begin{align}
	\sum_{k = 0}^{\left\lfloor {n}/{2} \right\rfloor } {\binom n{2k}5^k F_{6k + s} } &= 2^{n - 1}\big(2^n F_{2n + s} + (- 1)^n F_{3n + s}\big),\nonumber\\
	\sum_{k = 0}^{\left\lfloor {n}/{2} \right\rfloor } {\binom n{2k}5^k L_{6k + s} } &= 2^{n - 1} \big(2^n L_{2n + s} + (- 1)^n L_{3n + s}\big)\label{eq.a02pcu8}.
	\end{align}
\end{theorem}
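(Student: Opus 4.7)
The strategy is to apply the fundamental lemma once again, this time matching the factor $5^k$ to the square of $z=\sqrt 5$. Concretely, in both \eqref{eq.upvt5hu} and \eqref{eq.b7rf05c} I would set
\[
x=1,\qquad z=\sqrt 5,\qquad j=1,\qquad r=3,
\]
so that $z^{2k}=5^k$ and $F_{j(2rk+s)}=F_{6k+s}$, matching the LHS exactly. This turns the RHS into sums and differences of the four quantities $(1\pm\alpha^3\sqrt 5)^n$ and $(1\pm\beta^3\sqrt 5)^n$.

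The key simplification then comes from line \eqref{eq.ta648o1} of Lemma~\ref{lem.h02mtu1}, which gives the remarkably clean identifications
\[
1+\alpha^3\sqrt 5=4\alpha^2,\quad 1-\alpha^3\sqrt 5=-2\alpha^3,\quad 1-\beta^3\sqrt 5=4\beta^2,\quad 1+\beta^3\sqrt 5=-2\beta^3.
\]
Raising to the $n$-th power turns these into pure monomials in $\alpha$ or $\beta$, with prefactors $4^n$ or $(-2)^n$. Substituting back into \eqref{eq.upvt5hu} produces
\[
2\sqrt 5\sum_{k=0}^{\lfloor n/2\rfloor}\binom{n}{2k}5^k F_{6k+s}
=4^n(\alpha^{2n+s}-\beta^{2n+s})+(-2)^n(\alpha^{3n+s}-\beta^{3n+s}),
\]
and the Binet formula \eqref{binet} collapses this to $4^n\sqrt 5\,F_{2n+s}+(-2)^n\sqrt 5\,F_{3n+s}$. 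Dividing by $2\sqrt 5$ and writing $4^n/2=2^{n-1}\cdot 2^n$ and $(-2)^n/2=(-1)^n 2^{n-1}$ yields the first identity. The Lucas counterpart follows by the same substitution in \eqref{eq.b7rf05c}, where the $\alpha^s$ and $\beta^s$ terms now add (rather than subtract) and collapse to $L$-values via \eqref{binet}.

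There is no real obstacle here beyond careful sign bookkeeping: the only thing that could go wrong is mismatching the four pairings in \eqref{eq.ta648o1}, so I would double-check that the $+$ sign on the $\alpha$-side produces $4^n\alpha^{2n}$ while the $-$ sign produces $(-2)^n\alpha^{3n}$, and symmetrically (with reversed roles of $+$ and $-$) on the $\beta$-side. Once these four substitutions are correctly made, the rest of the derivation is a direct application of the Binet formulas.
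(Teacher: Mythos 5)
Your proposal is correct and coincides with the paper's own proof: the same substitution $x=j=1$, $z=\sqrt 5$, $r=3$ into \eqref{eq.upvt5hu} and \eqref{eq.b7rf05c}, the same use of \eqref{eq.ta648o1}, and the same intermediate identity $2\sqrt 5\sum\binom{n}{2k}5^k F_{6k+s}=4^n(\alpha^{2n+s}-\beta^{2n+s})+(-2)^n(\alpha^{3n+s}-\beta^{3n+s})$ before applying the Binet formulas. The sign bookkeeping you flag is handled exactly as you describe, so nothing further is needed.
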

\begin{proof} Setting $x=j=1$, $z=\sqrt 5$ and $r=3$ in \eqref{eq.upvt5hu} and \eqref{eq.b7rf05c} while making use of the identities \eqref{eq.ta648o1} gives
	\begin{align*}
	2\sqrt 5 \sum_{k = 0}^{\left\lfloor {n}/{2} \right\rfloor } {\binom {n}{2k}5^k F_{6k + s} }  &= 4^n \big(\alpha ^{2n + s}  - \beta ^{2n + s} \big) + ( - 2)^n \big(\alpha ^{3n + s}  - \beta ^{3n + s} \big),\\
	2\sum_{k = 0}^{\left\lfloor {n}/{2} \right\rfloor } {\binom {n}{2k}5^k L_{6k + s} }  7&= 4^n \big(\alpha ^{2n + s}  + \beta ^{2n + s} \big) + ( - 2)^n\big (\alpha ^{3n + s}  + \beta ^{3n + s} \big),
	\end{align*}
	from which the stated identities follow.
\end{proof}
\begin{corollary}
	For non-negative integer $n$ and any integer $s$, we have
	\begin{align*}
	\sum_{k = 0}^n \binom {2n}{2k} 5^k F_{6k + s}& = 2^{2n-1}\bigl(4^{n} F_{4n+s} + F_{6n+s}\bigr),\\
	\sum_{k = 0}^n \binom {2n}{2k} 5^k L_{6k + s} &= 2^{2n-1}\bigl( 4^{n}L_{4n+s} + L_{6n+s}\bigr).
	\end{align*}
\end{corollary}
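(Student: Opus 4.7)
The plan is to derive the two claimed identities by the same substitution trick used in the preceding corollaries: simply replace $n$ by $2n$ in the theorem immediately above the statement, namely in
\begin{align*}
\sum_{k = 0}^{\left\lfloor {n}/{2} \right\rfloor } {\binom n{2k}5^k F_{6k + s} } &= 2^{n - 1}\big(2^n F_{2n + s} + (- 1)^n F_{3n + s}\big),\\
\sum_{k = 0}^{\left\lfloor {n}/{2} \right\rfloor } {\binom n{2k}5^k L_{6k + s} } &= 2^{n - 1} \big(2^n L_{2n + s} + (- 1)^n L_{3n + s}\big).
\end{align*}

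First I would note that replacing $n$ by $2n$ makes the floor term $\lfloor 2n/2\rfloor = n$, so the upper limit of summation becomes $n$ as desired. Second, the parity factor $(-1)^{2n}$ is identically $+1$, which removes the case distinction that would otherwise appear. Third, the exponent $2^n$ becomes $2^{2n} = 4^n$ and the factor $2^{n-1}$ becomes $2^{2n-1}$. Putting these together transforms the right-hand sides into exactly $2^{2n-1}\bigl(4^n F_{4n+s} + F_{6n+s}\bigr)$ and $2^{2n-1}\bigl(4^n L_{4n+s} + L_{6n+s}\bigr)$, respectively.

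There is no real obstacle here — the corollary is a direct specialization, essentially a bookkeeping step exploiting the evenness of the new index. Accordingly the proof will be one or two lines, simply citing the parent theorem and observing the parity simplification. Unlike the earlier corollaries in this section, there is no need to invoke Lemma~\ref{lem.crcmndb}, because no sum or difference of Fibonacci/Lucas numbers arises when $(-1)^n$ collapses to $1$.
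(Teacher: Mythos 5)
Your proposal is correct and is exactly the argument the paper intends: the corollary follows by writing $2n$ for $n$ in the preceding theorem, whereupon $\lfloor 2n/2\rfloor=n$, $(-1)^{2n}=1$, and $2^{2n}=4^n$ (the paper in fact omits the proof entirely, and your observation that Lemma~\ref{lem.crcmndb} is not needed here matches why no further simplification appears in the stated result).
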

\begin{theorem} For non-negative integer $n$ and any integer $s$,
	\begin{align}
	2\sum_{k = 0}^n {\binom{2n}{2k} \frac{F_{6k + s}}{5^k} }&  =\left(\frac{4}{5}\right)^{n}\bigl(4^{n} F_{2n + s}  + F_s\bigr), \label{eq.d68cj51}\\
	2\sum_{k = 0}^n {\binom{2n}{2k} \frac{L_{6k + s}}{5^k} } & = \left(\frac{4}{5}\right)^{n} \bigl( 4^n L_{2n + s}  + L_s\bigr), \label{eq.lrs2k39}\\
	8\sum_{k = 0}^n {\binom{2n - 1}{2k}\frac{F_{6k + s}}{5^k} } & = \left(\frac{4}{5}\right)^{n} \bigl(4^{n} L_{2n - 1 + s}  -2 L_s\bigr),\label{eq.fbeis5t}\\
	2\sum_{k = 0}^n {\binom{2n - 1}{2k}\frac{ L_{6k + s}}{5^k} }&  =\left(\frac{4}{5}\right)^{n-1} \bigl(4^{n}F_{2n - 1 + s}  - 2F_s\bigr)\label{eq.x1whoqn}.
	\end{align}
\end{theorem}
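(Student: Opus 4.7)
The plan is to apply the fundamental identities \eqref{eq.upvt5hu} and \eqref{eq.b7rf05c} with the specialization $x=1$, $z=1/\sqrt 5$, $j=1$, $r=3$, so that $x^{n-2k}z^{2k}=5^{-k}$ and $j(2rk+s)=6k+s$ exactly reproduce the summands on the left-hand sides of the theorem. For \eqref{eq.d68cj51} and \eqref{eq.lrs2k39} we replace the dummy $n$ in the lemma by $2n$; for \eqref{eq.fbeis5t} and \eqref{eq.x1whoqn}, by $2n-1$, noting that $\binom{2n-1}{2n}=0$ makes extending the sum up to $k=n$ harmless.

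The only non-trivial simplification is furnished by identities \eqref{eq.ra1lsob}, which yield
$$1+\frac{\alpha^3}{\sqrt 5}=\frac{4\alpha}{\sqrt 5},\qquad 1-\frac{\alpha^3}{\sqrt 5}=\frac{-2}{\sqrt 5},\qquad 1+\frac{\beta^3}{\sqrt 5}=\frac{2}{\sqrt 5},\qquad 1-\frac{\beta^3}{\sqrt 5}=\frac{-4\beta}{\sqrt 5}.$$
In the $2n$ case the even power absorbs all minus signs, and grouping the four resulting monomials via the Binet formulas \eqref{binet} collapses the right-hand sides to $\sqrt 5\,(4/5)^n(4^nF_{2n+s}+F_s)$ and $(4/5)^n(4^nL_{2n+s}+L_s)$, respectively; dividing by the prefactors $2\sqrt 5$ or $2$ yields \eqref{eq.d68cj51} and \eqref{eq.lrs2k39}.

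In the $2n-1$ case odd parity preserves the signs, which triggers a type swap in the Binet arithmetic: the Fibonacci version of the lemma now produces Lucas numbers on the right, namely $(\sqrt 5)^{-(2n-1)}(4^{2n-1}L_{2n-1+s}-2^{2n-1}L_s)$, while conversely the Lucas version produces Fibonacci numbers. The remaining task is careful power-counting: writing $4^{2n-1}=4^n\cdot 4^{n-1}$, $2^{2n-1}=2\cdot 4^{n-1}$, and $(\sqrt 5)^{2n-1}=5^n/\sqrt 5$, one extracts $(4/5)^{n-1}(4^nF_{2n-1+s}-2F_s)$ in the Lucas case (which gives \eqref{eq.x1whoqn} directly) and a residual factor of $1/4$ in the Fibonacci case, cleared by multiplying through by $4$ to produce the factor $8$ in \eqref{eq.fbeis5t}. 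This bookkeeping of powers of $2$, $4$, and $\sqrt 5$ in the odd-$n$ case is the only real obstacle; otherwise the derivation is algorithmic.
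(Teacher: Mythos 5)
Your proof is correct and is essentially the paper's own argument: the paper sets $x=\sqrt 5$, $z=1$, $j=1$, $r=3$ in \eqref{eq.upvt5hu}--\eqref{eq.b7rf05c} and invokes the same identities \eqref{eq.ra1lsob}, which differs from your choice $x=1$, $z=1/\sqrt 5$ only by an overall factor of $(\sqrt 5)^n$ on both sides. The parity-induced type swap and the power bookkeeping you describe (including the extra factor of $4$ producing the coefficient $8$ in \eqref{eq.fbeis5t}) all check out.
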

\begin{proof}
	Setting $z=j=1$, $x=\sqrt 5$ and  $r=3$ in \eqref{eq.upvt5hu} and \eqref{eq.b7rf05c} while making use of the identities \eqref{eq.ra1lsob} gives
	\begin{align}
	2\sum_{k = 0}^{\left\lfloor {n}/{2} \right\rfloor}\binom{n}{2k}{\frac{F_{6k + s}}{5^{k-\frac{n+1}{2}}}} &= 4^n \big(\alpha ^{n + s}  - ( - 1)^n \beta ^{n + s} \big) + ( - 2)^n \big(\alpha ^s  - ( - 1)^n \beta ^s \big),\label{eq.a2k96nt}\\
	2\sum_{k = 0}^{\left\lfloor {n}/{2} \right\rfloor } \binom{n}{2k}  \frac{L_{6k + s}}{5^{k-\frac{n}{2}}} & = 4^n \big(\alpha ^{n + s}  + ( - 1)^n \beta ^{n + s}\big) + ( - 2)^n \big(\alpha ^s  + ( - 1)^n \beta ^s\big )\label{eq.qm7xjr9}.
	\end{align}
	Writing $2n$ for $n$ in~\eqref{eq.a2k96nt} and~\eqref{eq.qm7xjr9} produces identities~\eqref{eq.d68cj51}, \eqref{eq.lrs2k39} while writing $2n - 1$ for $n$ yields identities~\eqref{eq.fbeis5t} and~\eqref{eq.x1whoqn}.
\end{proof}
\begin{theorem}\label{thm.gybpu6k}
	For non-negative integer $n$ and any integer $s$,
	\begin{align}
	2 \sum_{k = 0}^{\left\lfloor {n}/{2} \right\rfloor} \binom {n}{2k} 4^k F_{2k + s}& =
	\begin{cases}
	F_{3n + s} + 5^{{n}/{2}}F_s, & \text{\rm $n$ even;}\\ 
	F_{3n + s} - 5^{(n-1)/{2}}L_s, & \text{\rm $n$ odd,} \end{cases}
	\label{eq.bhdgtf0}\\
	2 \sum_{k = 0}^{\left\lfloor {n}/{2}  \right\rfloor} \binom {n}{2k} 4^k L_{2k + s}& =
	\begin{cases}
	L_{3n + s} + 5^{{n}/{2}}L_s, & \text{\rm $n$ even;}\\
	L_{3n + s} - 5^{(n+1)/2}F_s, & \text{\rm $n$ odd.} \end{cases}
	\end{align}
\end{theorem}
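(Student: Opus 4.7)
The identities have the shape of the fundamental lemma with weight $z^{2k}=(2)^{2k}=4^k$, so the plan is to apply \eqref{eq.upvt5hu} and \eqref{eq.b7rf05c} with the parameters $x=1$, $z=2$, and $j=r=1$. This is the Fibonacci/Lucas analogue of the substitution that produces $4^k$ directly as $x^{n-2k}z^{2k}$, without any rescaling of the binomial coefficients.

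Under this choice the four base quantities appearing on the right sides of \eqref{eq.upvt5hu} and \eqref{eq.b7rf05c} are $1\pm 2\alpha$ and $1\pm 2\beta$. By \eqref{eq.ss9cyu8} these collapse to $\alpha^3, -\sqrt 5, \beta^3, \sqrt 5$ respectively. Substituting gives, after a little grouping,
\[
2\sqrt 5\sum_{k=0}^{\lfloor n/2\rfloor}\binom{n}{2k}4^{k}F_{2k+s}
=\bigl(\alpha^{3n+s}-\beta^{3n+s}\bigr)+\sqrt 5^{\,n}\bigl((-1)^n\alpha^{s}-\beta^{s}\bigr),
\]
\[
2\sum_{k=0}^{\lfloor n/2\rfloor}\binom{n}{2k}4^{k}L_{2k+s}
=\bigl(\alpha^{3n+s}+\beta^{3n+s}\bigr)+\sqrt 5^{\,n}\bigl((-1)^n\alpha^{s}+\beta^{s}\bigr).
\]
The first summand on each right-hand side is already $\sqrt 5\,F_{3n+s}$ or $L_{3n+s}$ via Binet \eqref{binet}.

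The only step that needs care is handling the parity-dependent tail $\sqrt 5^{\,n}\bigl((-1)^n\alpha^{s}\mp\beta^{s}\bigr)$. I split on the parity of $n$ and recognize the combinations via Binet: for even $n$ one gets $\alpha^s-\beta^s=\sqrt 5\,F_s$ and $\alpha^s+\beta^s=L_s$, producing the $+5^{n/2}F_s$ and $+5^{n/2}L_s$ correction terms; for odd $n$ one instead gets $-(\alpha^s+\beta^s)=-L_s$ in the Fibonacci identity and $-(\alpha^s-\beta^s)=-\sqrt 5\,F_s$ in the Lucas identity, yielding $-5^{(n-1)/2}L_s$ and $-5^{(n+1)/2}F_s$ respectively after absorbing one factor of $\sqrt 5$ appropriately. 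Dividing the Fibonacci equation by $\sqrt 5$ then produces exactly the two stated formulas.

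The whole argument is a direct template application; the only real bookkeeping obstacle is keeping the signs and the half-integer powers of $5$ straight when collapsing $\sqrt 5^{\,n}$ together with the $\sqrt 5$ on the left-hand side in the Fibonacci case.
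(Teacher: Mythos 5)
Your proposal is correct and is exactly the paper's proof: the authors likewise set $x=j=r=1$, $z=2$ in \eqref{eq.upvt5hu} and \eqref{eq.b7rf05c} and invoke \eqref{eq.ss9cyu8}, omitting the routine parity split that you carry out explicitly. All of your intermediate identities and the final sign/power-of-$5$ bookkeeping check out.
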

\begin{proof} Set $x=j=r=1$ and $z=2$ in \eqref{eq.upvt5hu},  \eqref{eq.b7rf05c} and make use of \eqref{eq.ss9cyu8}. The calculations are straightforward and omitted.
\end{proof}
\begin{theorem}
	For non-negative integer $n$ and any integer $s$,
	\begin{align*}
	\sum_{k = 0}^{\left\lfloor {n}/{2} \right\rfloor} \binom {n}{2k} \frac{F_{2k + s}}{4^k} &=\frac{1}{2^{n+1}}
	\begin{cases}
	(-1)^{s+1} F_{2n - s} + 5^{{n}/{2}}F_{n+s}, & \text{\rm $n$ even;}\\  (-1)^{s+1} F_{2n - s} + 5^{(n-1)/2}L_{n+s}, & \text{\rm $n$ odd,} \end{cases}
	\\
	\sum_{k = 0}^{\left\lfloor {n}/{2} \right\rfloor} \binom {n}{2k} \frac{L_{2k + s}}{4^{k}}&=\frac{1}{2^{n+1}}
	\begin{cases}
	(-1)^s L_{2n - s} + 5^{{n}/{2}}L_{n+s}, & \text{\rm $n$ even;}\\(-1)^s L_{2n - s} + 5^{(n+1)/2}F_{n+s}, & \text{\rm $n$ odd.} \end{cases}
	\end{align*}
\end{theorem}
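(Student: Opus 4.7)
The plan is to apply the master identities \eqref{eq.upvt5hu} and \eqref{eq.b7rf05c} with the parameter choice $x=1$, $z=1/2$, and $j=r=1$, so that the weight $x^{n-2k}z^{2k}=1/4^k$ matches exactly the powers of $4$ in the denominators of the target sums, and the Fibonacci/Lucas arguments become $2k+s$ as required.

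With this choice, the four quantities appearing on the right-hand sides are $1\pm\alpha/2=(2\pm\alpha)/2$ and $1\pm\beta/2=(2\pm\beta)/2$. These I would simplify using \eqref{eq.dirp56v}, which gives $2+\alpha=\alpha\sqrt 5$, $2-\alpha=\beta^2$, $2+\beta=-\beta\sqrt 5$, $2-\beta=\alpha^2$. Thus the right-hand sides become linear combinations of
\[
\alpha^s\bigl(\alpha^n 5^{n/2}+\beta^{2n}\bigr)\pm\beta^s\bigl((-1)^n\beta^n 5^{n/2}+\alpha^{2n}\bigr),
\]
divided by $2^n$.

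Grouping the pure golden-ratio powers gives $5^{n/2}\bigl(\alpha^{n+s}\mp(-1)^n\beta^{n+s}\bigr)$, which by the Binet formulas equals $5^{n/2}\sqrt 5\,F_{n+s}$ or $5^{n/2}L_{n+s}$ depending on the parity combination, yielding the factors $5^{(n\pm 1)/2}$ in the statement. For the remaining cross terms $\alpha^s\beta^{2n}\mp\beta^s\alpha^{2n}$, I would use $\alpha\beta=-1$ to pull out $(\alpha\beta)^s=(-1)^s$, leaving $(-1)^s(\beta^{2n-s}\mp\alpha^{2n-s})$, which is either $-(-1)^s\sqrt5\,F_{2n-s}$ or $(-1)^s L_{2n-s}$. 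These are precisely the terms $(-1)^{s+1}F_{2n-s}$ and $(-1)^s L_{2n-s}$ appearing in the stated formulas.

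The final step is to split into the cases $n$ even and $n$ odd to resolve the sign $(-1)^n$ and to convert $5^{n/2}$ cleanly into an integer times $\sqrt 5^{0\text{ or }1}$; the $\sqrt 5$ produced this way is absorbed against the $\sqrt 5$ appearing on the left-hand side of \eqref{eq.upvt5hu} (in the Fibonacci case), while the Lucas case has no such factor. Dividing by $2\sqrt 5$ or $2$ as appropriate gives the stated $1/2^{n+1}$ prefactor. The main subtlety — not so much an obstacle as a bookkeeping point — is keeping track of which of the two rearrangements ($5^{n/2}F_{n+s}$ vs.\ $5^{(n-1)/2}L_{n+s}$, and $5^{n/2}L_{n+s}$ vs.\ $5^{(n+1)/2}F_{n+s}$) is produced by each parity of $n$ in each of the two identities; once the parity cases are written out separately, the proof reduces to routine application of the Binet formulas, entirely analogous to the proof of Theorem \ref{thm.gybpu6k}.
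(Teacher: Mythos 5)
Your proof is correct and is essentially the paper's own argument: the paper substitutes $x=2$, $z=-1$ into \eqref{eq.upvt5hu} and \eqref{eq.b7rf05c} and invokes \eqref{eq.dirp56v}, which is the same computation as your choice $x=1$, $z=1/2$ up to an overall factor of $2^n$. The parity bookkeeping you outline resolves exactly as you indicate, so no further detail is needed.
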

\begin{proof} Set $x=2$, $z=-1$ and $r=j=1$ in \eqref{eq.upvt5hu} and \eqref{eq.b7rf05c} and make use of \eqref{eq.dirp56v}. The calculations are straightforward and omitted.
\end{proof}
\begin{theorem}
	For non-negative integer $n$ and any integer $s$,
	\begin{align*}
	2\sum_{k = 0}^{\left\lfloor {n}/{2} \right\rfloor} \binom {n}{2k} \frac{F_{6k + s}}{9^k}& = \left(\frac23\right)^{n}
	\begin{cases}
	(-1)^{s+1} F_{n - s} + 5^{{n}/{2}}F_{n+s}, & \text{\rm $n$ even;}\\ (-1)^{s+1} F_{n - s} + 5^{(n-1)/{2}}L_{n+s}, & \text{\rm $n$ odd,} \end{cases}
	\\
	2\sum_{ k = 0}^{\left\lfloor {n}/{2} \right\rfloor} \binom {n}{2k} \frac{L_{6k + s}}{9^k}& =\left(\frac23\right)^{n} 
	\begin{cases}
	(-1)^s L_{n - s} + 5^{{n}/{2}}L_{n+s}, & \text{\rm $n$ even;}\\ (-1)^s L_{n - s} + 5^{(n+1)/{2}}F_{n+s}, & \text{\rm $n$ odd.} \end{cases}
	\end{align*}
\end{theorem}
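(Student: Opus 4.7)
The plan is to instantiate the fundamental identities \eqref{eq.upvt5hu} and \eqref{eq.b7rf05c} in direct analogy with the preceding theorems, choosing parameters tuned to produce the weight $1/9^k$. I would set $x=3$, $z=1$, $j=1$, $r=3$, so that $x^{n-2k}z^{2k} = 3^{n-2k} = 3^{n}/9^{k}$; dividing through by $3^{n}$ at the end then yields the summand $\binom{n}{2k}F_{6k+s}/9^{k}$ (and the Lucas analogue).

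With these parameters the right-hand sides of \eqref{eq.upvt5hu} and \eqref{eq.b7rf05c} involve the four powers $(3\pm\alpha^{3})^{n}$ and $(3\pm\beta^{3})^{n}$, which are exactly the quantities supplied by \eqref{eq.bt93a3y}: $3-\alpha^{3}=2\beta$, $3+\alpha^{3}=2\alpha\sqrt{5}$, $3-\beta^{3}=2\alpha$, $3+\beta^{3}=-2\beta\sqrt{5}$. Substituting these and factoring out $2^{n}$ collapses the Fibonacci version to
\[
2\sqrt{5}\sum_{k=0}^{\lfloor n/2\rfloor}\binom{n}{2k}\frac{F_{6k+s}}{9^{k}} = \left(\tfrac{2}{3}\right)^{n}\!\left[5^{n/2}\bigl(\alpha^{n+s}-(-1)^{n}\beta^{n+s}\bigr)+\bigl(\alpha^{s}\beta^{n}-\alpha^{n}\beta^{s}\bigr)\right],
\]
with the analogous Lucas expression obtained by replacing each internal minus by a plus.

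To finish, I would simplify the cross-term via $\alpha^{s}\beta^{n}\mp\alpha^{n}\beta^{s} = (\alpha\beta)^{s}(\beta^{n-s}\mp\alpha^{n-s}) = (-1)^{s}(\beta^{n-s}\mp\alpha^{n-s})$, which by the Binet formulas \eqref{binet} yields $(-1)^{s+1}\sqrt{5}\,F_{n-s}$ for the Fibonacci identity and $(-1)^{s}L_{n-s}$ for the Lucas identity. Meanwhile $\alpha^{n+s}-(-1)^{n}\beta^{n+s}$ equals $\sqrt{5}\,F_{n+s}$ when $n$ is even and $L_{n+s}$ when $n$ is odd, while $\alpha^{n+s}+(-1)^{n}\beta^{n+s}$ is $L_{n+s}$ or $\sqrt{5}\,F_{n+s}$ respectively. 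After dividing the Fibonacci identity by $\sqrt{5}$, the factor $5^{n/2}$ combines to $5^{n/2}$ ($n$ even) or $5^{(n-1)/2}$ ($n$ odd); in the Lucas identity it becomes $5^{n/2}$ or $5^{(n+1)/2}$ by the same mechanism. This reproduces all four stated subcases.

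The only genuine obstacle is sign and parity bookkeeping: four subcases (two parities, Fibonacci versus Lucas) must be tracked in parallel. But each collapses to a single Binet substitution once the structure above is laid out, so the proof is, in the author's phrasing, \emph{straightforward and could be omitted}.
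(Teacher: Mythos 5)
Your proposal is correct and takes essentially the same route as the paper, which sets $x=r=3$, $z=-1$, $j=1$ in \eqref{eq.upvt5hu} and \eqref{eq.b7rf05c} and invokes \eqref{eq.bt93a3y}; your choice $z=1$ is equivalent since $z$ enters only through $z^{2k}$ and the symmetric combination $(x+\alpha^{3}z)^n+(x-\alpha^{3}z)^n$. The intermediate display and the parity/sign bookkeeping you supply are exactly the calculations the paper omits, and they check out.
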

\begin{proof} 	Set $x=r=3$, $z=-1$, $j=1$ in \eqref{eq.upvt5hu} and \eqref{eq.b7rf05c} and make use of \eqref{eq.bt93a3y}. The calculations are omitted.
\end{proof}
\begin{theorem} For non-negative integer $n$ and any integer $s$,
	\begin{align*}
	\sum_{k = 0}^{\left\lfloor n/2 \right\rfloor} \binom {n}{2k} 9^{k} F_{6k + s}& = 2^{n-1}
	\begin{cases}
	F_{4n + s} + 5^{{n}/{2}} F_{2n+s}, & \text{\rm $n$ even;}\\ F_{4n + s} - 5^{(n-1)/2}L_{2n+s}, & \text{\rm $n$ odd,} \end{cases}
	\\
	\sum_{k = 0}^{\left\lfloor {n}/{2} \right\rfloor} \binom {n}{2k} 9^{k} L_{6k + s} & = 2^{n-1}
	\begin{cases}
	L_{4n + s} + 5^{{n}/{2}}L_{2n+s}, & \text{\rm $n$ even;}\\  L_{4n + s} - 5^{(n+1)/{2}}F_{2n+s}, & \text{\rm $n$ odd.} \end{cases}
	\end{align*}
\end{theorem}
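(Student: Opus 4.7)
The plan is to follow exactly the template used in the preceding four theorems: specialize the fundamental identities \eqref{eq.upvt5hu} and \eqref{eq.b7rf05c}, apply the appropriate pair of simplifications from Lemma~\ref{lem.h02mtu1}, and finish with the Binet formulas. Since the inner index on $F$ and $L$ advances by $6$, we need $r=3$; since the weight is $9^k = z^{2k}$, we need $z=3$; and to keep $x^{n-2k}$ simple we take $x=j=1$. With these choices the relevant simplifications are precisely the four formulas in \eqref{eq.lapedpo}:
\[
1+3\alpha^3 = 2\alpha^4,\quad 1-3\alpha^3 = -2\alpha^2\sqrt 5,\quad 1+3\beta^3 = 2\beta^4,\quad 1-3\beta^3 = 2\beta^2\sqrt 5.
\]

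Substituting into \eqref{eq.upvt5hu} the quantities $(x\pm \alpha^3 z)^n$ collapse to $2^n\alpha^{4n}$ and $(-2)^n\alpha^{2n} 5^{n/2}$, and similarly for $\beta$, yielding
\[
2\sqrt 5 \sum_{k=0}^{\lfloor n/2\rfloor}\binom{n}{2k}9^k F_{6k+s} = 2^n\bigl(\alpha^{4n+s}-\beta^{4n+s}\bigr) + 2^n 5^{n/2}\bigl((-1)^n \alpha^{2n+s}-\beta^{2n+s}\bigr),
\]
and an analogous expression from \eqref{eq.b7rf05c} with the outer signs replaced by $+$. The only subtlety is then a parity split: when $n$ is even the bracketed combinations turn into $\sqrt 5 F$ and $\sqrt 5 F$ (and $L$, $L$ in the Lucas case), while when $n$ is odd the factor $(-1)^n=-1$ flips the second bracket so that $-\alpha^{2n+s}-\beta^{2n+s}=-L_{2n+s}$ for $F$ and $-\alpha^{2n+s}+\beta^{2n+s}=-\sqrt 5 F_{2n+s}$ for $L$. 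Absorbing the extra $\sqrt 5$ from $5^{n/2}=5^{(n-1)/2}\sqrt 5$ (respectively $5^{(n+1)/2}=5^{n/2}\sqrt 5$) and dividing through by $2\sqrt 5$ (respectively $2$) produces the four cases displayed in the theorem.

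There is no real obstacle beyond bookkeeping: the algebra is identical to the proofs of Theorem~\ref{thm.gybpu6k} and the two theorems immediately following it, merely with a different row of Lemma~\ref{lem.h02mtu1} invoked. The only step requiring a moment's care is keeping track of the sign flip from $(-1)^n$ in the odd case, which is exactly what converts the $F$-type bracket $\alpha^{2n+s}+\beta^{2n+s}$ into $L_{2n+s}$ and forces the $-5^{(n-1)/2}L_{2n+s}$ (respectively $-5^{(n+1)/2}F_{2n+s}$) correction. Accordingly, as in the preceding proof, the detailed calculations can reasonably be omitted.
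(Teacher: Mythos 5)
Your proposal is correct and is exactly the paper's argument: the authors likewise set $x=j=1$, $z=r=3$ in \eqref{eq.upvt5hu} and \eqref{eq.b7rf05c}, invoke \eqref{eq.lapedpo}, and omit the remaining computation. The parity bookkeeping you supply (the $(-1)^n$ factor turning the $5^{n/2}$-term into $-5^{(n-1)/2}L_{2n+s}$, resp.\ $-5^{(n+1)/2}F_{2n+s}$, for odd $n$) correctly fills in the details the paper leaves out.
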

\begin{proof} Set $x=j=1$ and $z=r=3$ in \eqref{eq.upvt5hu} and \eqref{eq.b7rf05c} and make use of \eqref{eq.lapedpo}. The calculations are omitted.
\end{proof}

\section{Binomial summation  identities, Part 2}

This section is based on the following fundamental lemma.
\begin{lemma}
	For integers $j$, $r$, $s$ and $n$ with $n$ non-negative, we have 
	\begin{equation}
	\label{eq.uglu0yf}
	\begin{split}
	2\sqrt 5 \sum_{k = 1}^{\left\lceil {n}/{2} \right\rceil} &\binom {n}{2k - 1} x^{n - 2k + 1} z^{2k - 1} F_{j(2rk + s)} \\
	&= \alpha ^{j(r + s)} \big((x + \alpha ^{jr} z)^n  - (x - \alpha ^{jr} z)^n\big)
	- \beta ^{j(r + s)}\big( (x + \beta ^{jr} z)^n  - (x - \beta ^{jr} z)^n\big),
	\end{split}
	\end{equation}
	\begin{equation}
	\label{eq.e120i21}
	\begin{split}
	2\sum_{k = 1}^{\left\lceil {n}/{2} \right\rceil} &\binom {n}{2k - 1}x^{n - 2k + 1} z^{2k - 1} L_{j(2rk + s)} \\
	&= \alpha ^{j(r + s)} \big((x + \alpha ^{jr} z)^n  - (x - \alpha ^{jr} z)^n\big)
	+ \beta ^{j(r + s)} \big((x + \beta ^{jr} z)^n  -(x - \beta ^{jr} z)^n\big).
	\end{split}
	\end{equation}
\end{lemma}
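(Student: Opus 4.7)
The plan is to imitate the proof of the earlier lemma (the one giving \eqref{eq.upvt5hu} and \eqref{eq.b7rf05c}) but with the ``odd-exponent'' half of the binomial theorem in place of the ``even-exponent'' half used there. The workhorse identity is
\[
(x+y)^n - (x-y)^n = 2\sum_{k=1}^{\lceil n/2 \rceil}\binom{n}{2k-1} x^{n-2k+1} y^{2k-1},
\]
which, on substituting $y = z^r$ and multiplying both sides by $z^{r+s}$, rewrites as
\[
h(z) := z^{r+s}\bigl[(x+z^r)^n - (x-z^r)^n\bigr] = 2\sum_{k=1}^{\lceil n/2 \rceil}\binom{n}{2k-1} x^{n-2k+1} z^{2rk+s}.
\]
The shift by $z^{r+s}$ is chosen precisely so that the exponent of $z$ becomes $2rk+s$, matching the Fibonacci/Lucas subscript appearing on the left-hand side of \eqref{eq.uglu0yf}.

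Next, I would apply the generating-function identities \eqref{eq.y4g4no6}--\eqref{eq.r2vyn7u}, identifying $g(k) = 2\binom{n}{2k-1}x^{n-2k+1}$, $f(k) = 2rk+s$, $c_1 = 1$, $c_2 = \lceil n/2 \rceil$. This yields
\[
2\sqrt{5}\sum_{k=1}^{\lceil n/2 \rceil}\binom{n}{2k-1} x^{n-2k+1} z^{2rk+s} F_{j(2rk+s)} = h(\alpha^j z) - h(\beta^j z),
\]
and the analogous Lucas version with $h(\alpha^j z) + h(\beta^j z)$ on the right. From the closed form of $h$ one computes directly
\[
h(\alpha^j z) = \alpha^{j(r+s)} z^{r+s}\bigl[(x + \alpha^{jr}z^r)^n - (x - \alpha^{jr}z^r)^n\bigr],
\]
and similarly for $h(\beta^j z)$.

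Finally, I would divide both sides by $z^{r+s}$ (turning the $z$-factor on the left into $z^{r(2k-1)}$) and then rename $z^r$ as $z$, a purely formal substitution since $z$ is a free parameter. The Fibonacci identity produced is exactly \eqref{eq.uglu0yf}, and its Lucas twin is \eqref{eq.e120i21}. I do not anticipate any real obstacle: the only point worth double-checking is that the upper limit $\lceil n/2 \rceil$ is correct (it is, since the binomial difference terminates precisely at $2k-1 \le n$), and the rest of the argument is pure bookkeeping that parallels the even-index case already in the paper.
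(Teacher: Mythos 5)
Your proposal is correct and follows essentially the same route as the paper: the authors also take $h(z) = z^{r+s}\bigl((x+z^r)^n - (x-z^r)^n\bigr) = 2\sum_{k}\binom{n}{2k-1}x^{n-2k+1}z^{2rk+s}$ and feed it into \eqref{eq.y4g4no6} and \eqref{eq.r2vyn7u} with the same identifications of $g$, $f$, $c_1$, $c_2$. Your explicit account of the final bookkeeping (dividing by $z^{r+s}$ and renaming $z^r$ as $z$) is a step the paper leaves implicit, and you have the exponent $x^{n-2k+1}$ right where the paper's displayed $h(z)$ contains a typo.
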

\begin{proof} In the identity
	$$
	h (z) = 2\sum_{k = 1}^{\left\lceil {n}/{2} \right\rceil } {\binom n{2k - 1}x^{n - 2k}z^{2rk + s} }  = z^{r + s} (x + z^r )^n  - z^{r + s} (x - z^r )^n,
	$$
	identify $g(k)=2\binom n{2k - 1}x^{n-2k},$ $f(k)=2rk+s$, $c_1=1$, $c_2=\left\lceil {n/2} \right\rceil$, and use these in \eqref{eq.y4g4no6} and \eqref{eq.r2vyn7u}.
\end{proof}
\begin{theorem} For non-negative integer $n$ and integer $s$,
	\begin{align}
	2\sum_{k = 1}^{\left\lceil {n}/{2} \right\rceil } {\binom n{2k - 1}F_{2k + s} } & = F_{2n + s + 1}  - ( - 1)^s F_{n - s - 1},\label{th10}\\
	2\sum_{k = 1}^{\left\lceil {n}/{2} \right\rceil } {\binom n{2k - 1}L_{2k + s} } & = L_{2n + s + 1}  + ( - 1)^s L_{n - s - 1}.\nonumber
	\end{align}
\end{theorem}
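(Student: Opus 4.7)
The plan is to follow exactly the same recipe used in the proof of \eqref{eq.ffgnfzm}--\eqref{eq.j0dzhsw}, but now applied to the odd-index lemma \eqref{eq.uglu0yf}--\eqref{eq.e120i21}. Specifically, I will set $x = z = j = r = 1$ in both \eqref{eq.uglu0yf} and \eqref{eq.e120i21}, and then simplify the right-hand sides via the identities in \eqref{eq.uorwakj}.

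With these parameter choices, the four building blocks $1 \pm \alpha$ and $1 \pm \beta$ collapse, by \eqref{eq.uorwakj}, to $\alpha^2$, $\beta$, $\beta^2$, $\alpha$, respectively. Thus the right-hand side of \eqref{eq.uglu0yf} becomes
\[
\alpha^{s+1}\bigl(\alpha^{2n} - \beta^{n}\bigr) - \beta^{s+1}\bigl(\beta^{2n} - \alpha^{n}\bigr)
= \bigl(\alpha^{2n+s+1} - \beta^{2n+s+1}\bigr) - \bigl(\alpha^{s+1}\beta^{n} - \beta^{s+1}\alpha^{n}\bigr),
\]
and analogously for \eqref{eq.e120i21} with the middle sign flipped to a plus.

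The only nontrivial calculation is to handle the cross terms $\alpha^{s+1}\beta^n \mp \beta^{s+1}\alpha^n$. Using $\alpha\beta = -1$ to factor out $(\alpha\beta)^{s+1} = (-1)^{s+1}$, one gets
\[
\alpha^{s+1}\beta^{n} - \beta^{s+1}\alpha^{n} = (-1)^{s+1}\bigl(\beta^{n-s-1} - \alpha^{n-s-1}\bigr) = (-1)^{s}\sqrt{5}\,F_{n-s-1},
\]
and similarly $\alpha^{s+1}\beta^{n} + \beta^{s+1}\alpha^{n} = -(-1)^{s} L_{n-s-1}$. Substituting these into the simplified right-hand sides, dividing the $F$ identity by $2\sqrt 5$ and the $L$ identity by $2$, and applying the Binet formulas \eqref{binet} to $\alpha^{2n+s+1} \pm \beta^{2n+s+1}$ yields exactly \eqref{th10} and the accompanying Lucas identity.

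No serious obstacle is anticipated: the work is a routine substitution-and-Binet argument of the same flavor as the proof of \eqref{eq.ffgnfzm}--\eqref{eq.j0dzhsw}. The one small point requiring care is bookkeeping of signs in the cross-term reduction, where the factor $(\alpha\beta)^{s+1} = (-1)^{s+1}$ must be tracked carefully to produce the correct $(-1)^s$ in the final expressions.
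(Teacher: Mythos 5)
Your proposal is correct and is essentially identical to the paper's own proof: the authors likewise set $x=z=j=r=1$ in \eqref{eq.uglu0yf} and \eqref{eq.e120i21}, reduce $1\pm\alpha$ and $1\pm\beta$ via \eqref{eq.uorwakj}, extract the factor $(\alpha\beta)^{s+1}=(-1)^{s+1}$ from the cross terms, and finish with the Binet formulas. Your sign bookkeeping in the cross-term reduction checks out, so there is nothing to add.
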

\begin{proof} Set $x=z=j=r=1$ in \eqref{eq.uglu0yf} and \eqref{eq.e120i21} to obtain
	\begin{align*}
	2\sqrt 5 \sum_{k = 1}^{\left\lceil {n}/{2} \right\rceil } \binom n{2k - 1}F_{2k + s}  & = \alpha ^{2n + s + 1}  - \beta ^{2n + s + 1}  + (\alpha \beta )^{s + 1} (\alpha ^{n - s - 1}  - \beta ^{n - s - 1} ),\\
	2\sum_{k = 1}^{\left\lceil {n}/{2} \right\rceil } \binom n{2k - 1}L_{2k + s} &  = \alpha ^{2n + s + 1}  + \beta ^{2n + s + 1} - (\alpha \beta )^{s + 1} (\alpha ^{n - s - 1}  + \beta ^{n - s - 1} ),
	\end{align*}
	and hence the stated identities.
\end{proof}
\begin{corollary} For non-negative integer $n$ and integer $s$,
	\begin{align}
	2\sum_{k = 1}^n {\binom {2n}{2k - 1}F_{2k + s} } & = 
	\begin{cases}
	L_{n + s + 1} F_{3n}, & \text{\rm $n$ even;}\\ F_{n + s + 1} L_{3n}, & \text{\rm  $n$ odd,} \end{cases}\label{coro6_1}
	\\
	2\sum_{k = 1}^n {\binom {2n}{2k - 1}L_{2k + s} } & = 
	\begin{cases}
	5F_{n + s + 1} F_{3n}, & \text{\rm  $n$ even;}\\ L_{n + s + 1} L_{3n}, & \text{\rm $n$ odd.}\nonumber \end{cases}
	\end{align}
\end{corollary}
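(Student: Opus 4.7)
The plan is to reduce the corollary directly to the preceding theorem by substituting $n \mapsto 2n$, then collapse the two-term right-hand side using the parity-sensitive versions of Vajda's identities.

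First, in identity \eqref{th10} and its Lucas companion from the previous theorem, I would replace $n$ by $2n$. Since $\lceil 2n/2 \rceil = n$, this yields
\begin{align*}
2\sum_{k=1}^{n}\binom{2n}{2k-1}F_{2k+s} &= F_{4n+s+1} - (-1)^s F_{2n-s-1},\\
2\sum_{k=1}^{n}\binom{2n}{2k-1}L_{2k+s} &= L_{4n+s+1} + (-1)^s L_{2n-s-1}.
\end{align*}
Now set $u = 4n+s+1$ and $v = 2n-s-1$; note that $u$ and $v$ have the same parity (their difference $2n+2s+2$ is even), with $\tfrac{u-v}{2} = n+s+1$ and $\tfrac{u+v}{2} = 3n$, so the "useful versions" of Lemma \ref{lem.crcmndb} apply.

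The key observation is matching the sign $(-1)^s$ on the right-hand side against $(-1)^{\frac{u-v}{2}} = (-1)^{n+s+1}$. If $n$ is even, then $(-1)^{n+s+1} = -(-1)^s$, so $-(-1)^s F_{2n-s-1} = (-1)^{\frac{u-v}{2}} F_v$, and the Fibonacci sum becomes $F_u + (-1)^{\frac{u-v}{2}} F_v = L_{n+s+1} F_{3n}$; similarly the Lucas sum becomes $L_u - (-1)^{\frac{u-v}{2}} L_v = 5 F_{n+s+1} F_{3n}$. If $n$ is odd, then $(-1)^{n+s+1} = (-1)^s$, so the Fibonacci sum becomes $F_u - (-1)^{\frac{u-v}{2}} F_v = F_{n+s+1} L_{3n}$ and the Lucas sum becomes $L_u + (-1)^{\frac{u-v}{2}} L_v = L_{n+s+1} L_{3n}$.

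This exactly matches the four branches claimed in the corollary. There is no real obstacle here; the only point requiring care is the bookkeeping of the sign $(-1)^s$ versus $(-1)^{(u-v)/2}$, which flips between the two parity cases of $n$ and is precisely what produces the swap between $L \cdot F$ and $F \cdot L$ (respectively $5 F \cdot F$ and $L \cdot L$) on the right-hand side.
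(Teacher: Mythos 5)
Your proof is correct and follows essentially the same route as the paper: substitute $2n$ for $n$ in the preceding theorem to get $F_{4n+s+1}-(-1)^sF_{2n-s-1}$ (resp.\ the Lucas analogue), then collapse via Lemma~\ref{lem.crcmndb} with $\tfrac{u-v}{2}=n+s+1$ and $\tfrac{u+v}{2}=3n$, the parity of $n+s+1$ versus $s$ dictating which branch applies. The sign bookkeeping in both parity cases checks out.
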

\begin{proof} We prove \eqref{coro6_1}. From \eqref{th10}, using \eqref{eq.t3h2005} we have 
	\begin{equation*}
	\begin{split}
	2\sum_{k = 1}^n \binom {2n}{2k - 1}F_{2k + s}   &= F_{4n + s + 1}  - ( - 1)^s F_{2n - s - 1}\\
	&= F_{3n + (n + s + 1)}  + ( - 1)^{s + 1} F_{3n - (n + s + 1)}\\
	&= 
	\begin{cases}
	L_{n + s + 1} F_{3n}, & \text{\rm $n$ even;}\\ F_{n + s + 1} L_{3n}, & \text{\rm  $n$ odd.} \end{cases}
	\end{split}
	\end{equation*}
\end{proof}
\begin{corollary} For positive integer $n$,
	\begin{align*}
	2\sum_{k = 1}^n {\binom{2n - 1}{2k - 1}F_{2k - 1} } & =
	\begin{cases}
	F_{2n - 1} L_{n - 1} L_n , & \text{\rm$n$ odd;}\\  5F_{2n - 1} F_{n - 1} F_n, & \text{\rm$n$ even.} \end{cases}\\
	2\sum_{k = 1}^n {\binom{2n - 1}{2k - 1}L_{2k - 1} }&  = L_{4n - 2}  - L_{2n - 1}.
	\end{align*}
\end{corollary}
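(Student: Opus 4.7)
The plan is to obtain both identities by specialising the previous theorem: set $s=-1$ and then replace $n$ by $2n-1$ in identity \eqref{th10} and its Lucas companion, finally simplifying the resulting right-hand sides.

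For the Lucas identity the simplification is immediate. Substituting $s=-1$ into the Lucas version gives
\[
2\sum_{k=1}^{\left\lceil n/2\right\rceil}\binom{n}{2k-1}L_{2k-1}=L_{2n}-L_{n},
\]
and replacing $n$ by $2n-1$ (so that $\lceil(2n-1)/2\rceil=n$) yields exactly $L_{4n-2}-L_{2n-1}$ on the right, which is the stated result.

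For the Fibonacci identity the same substitution produces
\[
2\sum_{k=1}^{n}\binom{2n-1}{2k-1}F_{2k-1}=F_{4n-2}+F_{2n-1},
\]
and I would simplify this right-hand side in two steps. First, applying the identity $F_{r+s}-(-1)^sF_{r-s}=F_sL_r$ from Lemma~\ref{lem.crcmndb} with $r=s=2n-1$ gives $F_{4n-2}=F_{2n-1}L_{2n-1}$, so the right-hand side factors as $F_{2n-1}\bigl(L_{2n-1}+1\bigr)$. Second, I would evaluate $L_{2n-1}+1$ by parity using Vajda's identities \eqref{eq.iix4fk6} with $r=n$ and $s=n-1$, which give both $L_{2n-1}+(-1)^{n-1}=L_{n-1}L_n$ and $L_{2n-1}-(-1)^{n-1}=5F_{n-1}F_n$. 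For $n$ odd this means $L_{2n-1}+1=L_{n-1}L_n$, delivering $F_{2n-1}L_{n-1}L_n$; for $n$ even it means $L_{2n-1}+1=5F_{n-1}F_n$, delivering $5F_{2n-1}F_{n-1}F_n$, matching the two cases in the corollary.

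I do not expect any real obstacle here: the derivation is entirely mechanical once the right specialisation is chosen. The only mildly delicate point is matching the sign $(-1)^{n-1}$ against the constant $+1$, which is what forces the parity split in the Fibonacci case and determines which of the two Vajda identities applies.
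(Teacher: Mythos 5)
Your proof is correct and follows exactly the route the paper intends (the corollary is left unproved there, but by analogy with the neighbouring corollaries the intended argument is precisely to set $s=-1$, replace $n$ by $2n-1$ in \eqref{th10} and its Lucas companion, and simplify). Your handling of $F_{4n-2}+F_{2n-1}=F_{2n-1}(L_{2n-1}+L_1)$ via Lemma~\ref{lem.crcmndb} and the parity split is exactly what the simplification requires.
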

\begin{theorem} For non-negative integer $n$ and any integer $s$,
	\begin{align*}
	\sum_{k = 1}^{\left\lceil {n}/{2} \right\rceil } \binom {n}{2k-1} F_{6k + s} &= 2^{n-1} \big(F_{2n+3+s} - (-1)^n F_{n+3+s}\big), \\ 
	\sum_{k = 1}^{\left\lceil {n}/{2}  \right\rceil } \binom {n}{2k-1} L_{6k + s}& = 2^{n-1}\big(L_{2n+3+s} - (-1)^n L_{n+3+s}\big).
	\end{align*}
\end{theorem}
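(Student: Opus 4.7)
The plan is to apply the second fundamental lemma \eqref{eq.uglu0yf}--\eqref{eq.e120i21} with the parameter choice $x=z=j=1$, $r=3$, mirroring the strategy used in the earlier theorem that produced \eqref{F_id_new1}--\eqref{L_id_new1} from \eqref{eq.upvt5hu}--\eqref{eq.b7rf05c} with the same substitutions. With $jr=3$, the exponent $j(2rk+s)$ matches the desired $6k+s$, and the right-hand sides of \eqref{eq.uglu0yf}, \eqref{eq.e120i21} collapse to expressions in $(1\pm\alpha^3)^n$ and $(1\pm\beta^3)^n$, which are exactly the quantities handled by \eqref{eq.vi8gjsp}.

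Concretely, I would first substitute to obtain
\[
2\sqrt 5\sum_{k=1}^{\lceil n/2\rceil}\binom{n}{2k-1}F_{6k+s}=\alpha^{3+s}\bigl((1+\alpha^3)^n-(1-\alpha^3)^n\bigr)-\beta^{3+s}\bigl((1+\beta^3)^n-(1-\beta^3)^n\bigr),
\]
and the analogous identity with $+$ on the $\beta$ term for the Lucas sum. Next I would apply \eqref{eq.vi8gjsp}, namely $1+\alpha^3=2\alpha^2$, $1-\alpha^3=-2\alpha$, and the $\beta$-analogues, so that
\[
(1+\alpha^3)^n-(1-\alpha^3)^n=2^n\alpha^{2n}-(-2)^n\alpha^n=2^n\bigl(\alpha^{2n}-(-1)^n\alpha^n\bigr),
\]
and similarly for $\beta$. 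Factoring out $2^n$ and combining like powers of $\alpha$ and $\beta$ yields
\[
2\sqrt 5\sum_{k=1}^{\lceil n/2\rceil}\binom{n}{2k-1}F_{6k+s}=2^n\bigl[(\alpha^{2n+3+s}-\beta^{2n+3+s})-(-1)^n(\alpha^{n+3+s}-\beta^{n+3+s})\bigr],
\]
and the Binet formulas \eqref{binet} finish the Fibonacci identity. The Lucas identity follows identically, except that the $\beta$-terms add, producing Lucas numbers on the right.

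There is no real obstacle here; the argument is essentially a direct computation in the spirit of the preceding theorems in this section. The only point requiring a little care is the bookkeeping of the shift $r+s=3+s$ appearing in the prefactors $\alpha^{j(r+s)}$, $\beta^{j(r+s)}$ on the right-hand sides of \eqref{eq.uglu0yf} and \eqref{eq.e120i21} (as opposed to $\alpha^{js}$ in \eqref{eq.upvt5hu}, \eqref{eq.b7rf05c}), since this is precisely what accounts for the appearance of the $+3$ in the subscripts $2n+3+s$ and $n+3+s$ on the right-hand side, and the consequent sign change $-(-1)^n$ in place of $+(-1)^n$ compared with \eqref{F_id_new1}, \eqref{L_id_new1}.
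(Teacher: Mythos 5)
Your proposal is correct and follows exactly the paper's own route: substituting $x=z=j=1$, $r=3$ into \eqref{eq.uglu0yf} and \eqref{eq.e120i21}, applying \eqref{eq.vi8gjsp}, and simplifying via the Binet formulas. The computation, including the handling of the shift $\alpha^{j(r+s)}=\alpha^{3+s}$ that produces the $+3$ in the subscripts, checks out.
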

\begin{proof} 	Set $x=z=j=1$ and $r=3$ in \eqref{eq.uglu0yf} and \eqref{eq.e120i21}, use~\eqref{eq.vi8gjsp} and simplify.
\end{proof}
\begin{corollary} For positive integer $n$ and any integer $s$,
	\begin{align*}
	\sum_{k = 1}^{n} \binom {2n-1}{2k-1} F_{6k + s} &= 4^{n-1} \big(F_{4n+1+s} + F_{2n+2+s}\big), \\
	\sum_{k = 1}^{n} \binom {2n-1}{2k-1} L_{6k + s}& = 4^{n-1}\big (L_{4n+1+s} + L_{2n+2+s}\big).
	\end{align*}
\end{corollary}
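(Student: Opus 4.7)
The statement is a direct corollary of the immediately preceding theorem, so the plan is simply to perform the substitution $n \mapsto 2n-1$ in both identities of that theorem and simplify the resulting expressions. This is the same pattern used earlier in the paper (for instance, in the corollary following \eqref{F_id_new1}--\eqref{L_id_new1}), so one expects no new difficulties.

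Concretely, with $n$ replaced by $2n-1$ (a positive integer, since $n\ge 1$) I would track the four pieces separately: the upper summation limit $\lceil n/2 \rceil$ becomes $\lceil(2n-1)/2\rceil = n$, matching the stated upper limit; the prefactor $2^{n-1}$ becomes $2^{2n-2}=4^{n-1}$; the parity factor $(-1)^n$ becomes $(-1)^{2n-1}=-1$, which flips the sign of the second term inside the parentheses; and finally the Fibonacci/Lucas indices shift as $2n+3+s\mapsto 2(2n-1)+3+s = 4n+1+s$ and $n+3+s\mapsto (2n-1)+3+s = 2n+2+s$. Collecting these substitutions in the Fibonacci identity of the theorem gives
\[
\sum_{k=1}^{n}\binom{2n-1}{2k-1}F_{6k+s} \;=\; 2^{2n-2}\bigl(F_{4n+1+s}-(-1)F_{2n+2+s}\bigr) \;=\; 4^{n-1}\bigl(F_{4n+1+s}+F_{2n+2+s}\bigr),
\]
which is exactly the claimed Fibonacci identity; the Lucas case is obtained in the same way from the Lucas identity of the preceding theorem.

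Since the argument is nothing more than bookkeeping of a single substitution, there is no real obstacle; the only point requiring mild care is the sign produced by $(-1)^{2n-1}=-1$, which turns the subtraction in the theorem into the addition seen in the corollary. Accordingly, the proof itself can be compressed to a one-line invocation along the lines of ``replace $n$ by $2n-1$ in each identity of the preceding theorem and use $(-1)^{2n-1}=-1$.''
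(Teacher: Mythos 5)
Your proof is correct and matches the paper's intended route exactly: the paper leaves this corollary without an explicit proof, but (as with the analogous corollary after \eqref{F_id_new1}--\eqref{L_id_new1}) the argument is precisely the substitution $n\mapsto 2n-1$ in the preceding theorem, with the sign flip from $(-1)^{2n-1}=-1$ accounting for the change from subtraction to addition. All your index and prefactor computations check out.
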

\begin{theorem} For non-negative integer $n$ and any integer $s$,
	\begin{align*}
	\sum_{k = 1}^{\left\lceil {n}/{2} \right\rceil} \binom {n}{2k-1} 4^k F_{2k + s} &=
	\begin{cases}
	F_{3n + 1 + s} - 5^{{n}/{2}}F_{s+1}, & \text{\rm$n$ even;}\\ F_{3n + 1 + s} + 5^{(n-1)/2}L_{s+1}, & \text{\rm $n$ odd,
	} \end{cases}
	\\
	\sum_{k = 1}^{\left\lceil {n}/{2} \right\rceil} \binom {n}{2k-1} 4^k L_{2k + s} &=
	\begin{cases}
	L_{3n + 1 + s} - 5^{{n}/{2}} L_{s+1}, & \text{\rm$n$ even;}\\  L_{3n + 1 + s} + 5^{(n+1)/2}F_{s+1}, & \text{\rm$n$ odd.} \end{cases}
	\end{align*}
\end{theorem}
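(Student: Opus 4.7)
The plan is to invoke the fundamental lemma of Section 4, specializing \eqref{eq.uglu0yf} and \eqref{eq.e120i21} with $x = j = r = 1$ and $z = 2$. With this choice the factor $x^{n-2k+1} z^{2k-1}$ becomes $2^{2k-1} = \tfrac12 \cdot 4^k$, and the Fibonacci/Lucas index $j(2rk + s)$ reduces to $2k + s$, so the left-hand sides of both identities match the sums of interest up to the constant factor $\tfrac12$.

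For the right-hand sides, note that $x + \alpha^{jr} z = 1 + 2\alpha$, $x - \alpha^{jr} z = 1 - 2\alpha$, and similarly for $\beta$. By identities \eqref{eq.ss9cyu8},
\[
1 + 2\alpha = \alpha^3, \qquad 1 - 2\alpha = -\sqrt 5, \qquad 1 + 2\beta = \beta^3, \qquad 1 - 2\beta = \sqrt 5.
\]
Substituting these into \eqref{eq.uglu0yf} and \eqref{eq.e120i21} yields
\[
\sqrt 5 \sum_{k=1}^{\lceil n/2\rceil} \binom{n}{2k-1} 4^k F_{2k+s} = \alpha^{3n+1+s} - \beta^{3n+1+s} - (-1)^n \sqrt 5^{\,n}\, \alpha^{1+s} + \sqrt 5^{\,n}\, \beta^{1+s},
\]
and, for the Lucas version, the analogous expression with the last two signs flipped so that the $\beta$-terms add to the $\alpha$-terms.

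The last step is a parity split on $n$ combined with the Binet formulas \eqref{binet}. For $n$ even, the auxiliary terms collapse to $\sqrt 5^{\,n}(\beta^{1+s} - \alpha^{1+s}) = -5^{n/2}\sqrt 5\, F_{s+1}$ in the Fibonacci case and to $-\sqrt 5^{\,n}(\alpha^{1+s} + \beta^{1+s}) = -5^{n/2} L_{s+1}$ in the Lucas case; for $n$ odd, the roles of sum and difference swap, giving $+\sqrt 5^{\,n}(\alpha^{1+s} + \beta^{1+s}) = 5^{(n-1)/2}\sqrt 5\, L_{s+1}$ and $+\sqrt 5^{\,n}(\alpha^{1+s} - \beta^{1+s}) = 5^{(n+1)/2} F_{s+1}$ respectively. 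Dividing the Fibonacci identity by $\sqrt 5$ and keeping the Lucas identity as is produces exactly the four cases stated in the theorem.

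There is no real obstacle: the manipulation follows the same template as the proof of Theorem \ref{thm.gybpu6k}, and the only subtlety is bookkeeping the sign $(-1)^n$ arising from $(1-2\alpha)^n = (-\sqrt 5)^n$, together with the mismatch between $1-2\alpha = -\sqrt 5$ and $1-2\beta = +\sqrt 5$, which together determine whether the residual terms combine into $F_{s+1}$ or $L_{s+1}$ in each parity case. This justifies the authors' remark that the calculations are routine and may be omitted.
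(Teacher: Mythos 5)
Your proof is correct and follows exactly the route the paper takes: specializing \eqref{eq.uglu0yf} and \eqref{eq.e120i21} with $x=j=r=1$, $z=2$, invoking \eqref{eq.ss9cyu8}, and splitting on the parity of $n$ via the Binet formulas. The paper omits these "straightforward" calculations, and your bookkeeping of the $(-1)^n$ sign and the $\alpha$/$\beta$ asymmetry supplies them accurately.
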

\begin{proof} Set $x=j=r=1$ and $z=2$ in \eqref{eq.uglu0yf} and \eqref{eq.e120i21}, use~\eqref{eq.ss9cyu8} and simplify.
\end{proof}
\begin{theorem} For non-negative integer $n$ and any integer $s$,
	\begin{align*}
	\sum_{k = 1}^{\left\lceil {n}/{2} \right\rceil} \binom {n}{2k-1}  \frac{F_{2k + s}}{4^k} &=
	\frac{1}{2^{n+2}}\begin{cases}
	(-1)^{s+1} F_{2n - 1 - s} + 5^{{n}/{2}}F_{n+s+1}, & \text{\rm$n$ even;}\\ (-1)^{s+1} F_{2n - 1 - s} + 5^{(n-1)/2}L_{n+s+1}, & \text{\rm$n$ odd,} \end{cases}
	\\
	\sum_{k = 1}^{\left\lceil {n}/{2} \right\rceil} \binom {n}{2k-1}  \frac{L_{2k + s}}{4^k} &=
	\frac{1}{2^{n+2}}\begin{cases}
	(-1)^s L_{2n - 1 - s} + 5^{{n}/{2}}L_{n+s+1}, & \text{\rm$n$ even;}\\ (-1)^s L_{2n - 1 - s} + 5^{(n+1)/{2}}F_{n+s+1}, & \text{\rm$n$ odd.} \end{cases}
	\end{align*}
\end{theorem}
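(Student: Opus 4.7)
The proof plan mirrors exactly the template used for the preceding theorem (the one with $\binom{n}{2k}$ denominators $4^k$), but applied to the odd-index fundamental lemma \eqref{eq.uglu0yf}–\eqref{eq.e120i21} instead of \eqref{eq.upvt5hu}–\eqref{eq.b7rf05c}. I would set $x=2$, $z=-1$, and $j=r=1$. With these choices the weight in the sum becomes $x^{n-2k+1}z^{2k-1}=-2^{n+1}/4^{k}$, so the binomial sum $\sum \binom{n}{2k-1}F_{2k+s}/4^{k}$ (respectively $L_{2k+s}/4^{k}$) appears on the left-hand side with an explicit constant multiplier $-2^{n+1}$.

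For the right-hand side I would apply the identities in \eqref{eq.dirp56v}, namely $2-\alpha=\beta^{2}$, $2-\beta=\alpha^{2}$, $2+\alpha=\alpha\sqrt 5$, $2+\beta=-\beta\sqrt 5$, to evaluate the four needed $n$-th powers:
\begin{equation*}
(2+\alpha)^{n}=\alpha^{n}5^{n/2},\quad (2-\alpha)^{n}=\beta^{2n},\quad (2+\beta)^{n}=(-1)^{n}\beta^{n}5^{n/2},\quad (2-\beta)^{n}=\alpha^{2n}.
\end{equation*}
Substituting into \eqref{eq.uglu0yf} and \eqref{eq.e120i21} and collecting, the Fibonacci identity reduces to
\begin{equation*}
-2\sqrt 5\,2^{n+1}\sum_{k}\binom{n}{2k-1}\frac{F_{2k+s}}{4^{k}} = -5^{n/2}\bigl(\alpha^{n+s+1}-(-1)^{n}\beta^{n+s+1}\bigr) + \bigl(\alpha^{s+1}\beta^{2n}-\beta^{s+1}\alpha^{2n}\bigr),
\end{equation*}
with the analogous Lucas identity obtained by flipping the minus sign in the second bracket to a plus. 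The second bracket simplifies immediately via $(\alpha\beta)^{s+1}=(-1)^{s+1}$: the Fibonacci version gives $(-1)^{s}\sqrt 5\,F_{2n-s-1}$, and the Lucas version gives $-(-1)^{s+1}L_{2n-s-1}=(-1)^{s}L_{2n-s-1}$.

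The remaining step is the parity case analysis for the $5^{n/2}$-weighted bracket. When $n$ is even, $\alpha^{n+s+1}-(-1)^{n}\beta^{n+s+1}=\sqrt 5\,F_{n+s+1}$ and the $+$ variant yields $L_{n+s+1}$; when $n$ is odd the roles swap, giving $L_{n+s+1}$ and $\sqrt 5\,F_{n+s+1}$ respectively. Absorbing the resulting $\sqrt 5$ into $5^{n/2}$ produces $5^{n/2}$ when $n$ is even, and $5^{(n-1)/2}$ or $5^{(n+1)/2}$ when $n$ is odd, which is precisely the pattern of exponents appearing in the four branches of the statement. Dividing through by $-2^{n+2}\sqrt 5$ (Fibonacci case) or $-2^{n+2}$ (Lucas case) yields the claimed formulas.

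The only nontrivial aspect is bookkeeping: two independent binary choices (Fibonacci vs.\ Lucas, $n$ even vs.\ odd) produce the four branches, and one must be careful with the signs coming from $(-1)^{n}$, $(\alpha\beta)^{s+1}$, and the overall $-2^{n+1}$ factor on the left. All else is a direct imitation of the preceding theorem's proof, so I expect no real obstacle beyond verifying these signs.
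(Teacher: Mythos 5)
Your proposal is exactly the paper's proof: the authors simply say to set $x=2$, $z=-1$, $j=r=1$ in \eqref{eq.uglu0yf} and \eqref{eq.e120i21}, use \eqref{eq.dirp56v}, and simplify, with all calculations omitted, and your worked-out details (the weight $-2^{n+1}/4^k$, the four evaluations of $(2\pm\alpha)^n$, $(2\pm\beta)^n$, and the parity split) are the intended ones and lead correctly to the stated formulas. The only quibble is a bookkeeping slip in one intermediate line: the Lucas bracket $\alpha^{s+1}\beta^{2n}+\beta^{s+1}\alpha^{2n}$ equals $(\alpha\beta)^{s+1}L_{2n-s-1}=(-1)^{s+1}L_{2n-s-1}$ (not $(-1)^{s}L_{2n-s-1}$); the extra sign you wrote is exactly the one supplied by the subsequent division by $-2^{n+2}$, so the final result is unaffected.
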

\begin{proof} Set $x=2$, $z=-1$, $j=r=1$ in \eqref{eq.uglu0yf} and \eqref{eq.e120i21}, use~\eqref{eq.dirp56v} and simplify.
\end{proof}
\begin{theorem} For non-negative integer $n$ and any integer $s$,
	\begin{align*}
	6\sum_{k = 1}^{\left\lceil {n}/{2} \right\rceil} \binom {n}{2k-1} \frac{F_{6k + s}}{9^k} &= \left(\frac{2}{3}\right)^n
	\begin{cases}
	(-1)^{s+1} F_{n - 3 - s} + 5^{{n}/{2}}F_{n+s+3}, & \text{\rm$n$ even};\\(-1)^{s+1} F_{n - 3 - s} + 5^{(n-1)/{2}}L_{n+s+3}, & \text{\rm$n$ odd,} \end{cases}
	\\
	6\sum_{k = 1}^{\left\lceil {n}/{2} \right\rceil}\binom {n}{2k-1} \frac{L_{6k + s}}{9^k} &= \left(\frac{2}{3}\right)^n
	\begin{cases}
	(-1)^s L_{n - 3 - s} + 5^{{n}/{2}}L_{n+s+3}, & \text{\rm$n$ even;}\\ (-1)^s L_{n - 3 - s} + 5^{({n+1})/{2}}F_{n+s+3}, & \text{\rm$n$ odd.} \end{cases}
	\end{align*}
\end{theorem}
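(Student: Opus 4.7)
The plan is to proceed in exact parallel with the preceding theorems in this section by specializing the fundamental identities \eqref{eq.uglu0yf} and \eqref{eq.e120i21}. I would set $j=1$, $r=3$, $x=3$, $z=-1$: the choice $r=3$ produces the index $6k+s$; the choice $z=-1$ gives $z^{2k-1}=-1$; and $x=3$ combined with $z=-1$ yields the weight $x^{n-2k+1}z^{2k-1}=-3^{n+1}/9^k$, which accounts simultaneously for the $1/9^k$ factor on the left-hand side and, after the subsequent division and scaling by $6$, for the $(2/3)^n$ factor on the right.

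With these substitutions, the right-hand sides of \eqref{eq.uglu0yf} and \eqref{eq.e120i21} involve the four quantities $3\mp\alpha^{3}$ and $3\mp\beta^{3}$, which are exactly those evaluated in Lemma \ref{lem.h02mtu1}, identity \eqref{eq.bt93a3y}: $3-\alpha^{3}=2\beta$, $3+\alpha^{3}=2\alpha\sqrt{5}$, $3-\beta^{3}=2\alpha$, $3+\beta^{3}=-2\beta\sqrt{5}$. Expanding with these and grouping, the right-hand side splits into a ``crossed'' group $\alpha^{3+s}\beta^{n}\pm\alpha^{n}\beta^{3+s}$, which collapses via $\alpha\beta=-1$ to $(-1)^{n}(\alpha^{s+3-n}\pm\beta^{s+3-n})$ and hence, by Binet, to either $(-1)^{n}\sqrt{5}\,F_{s+3-n}$ or $(-1)^{n}L_{s+3-n}$; and a ``$5^{n/2}$-group'' $5^{n/2}(\alpha^{n+s+3}\mp(-1)^{n}\beta^{n+s+3})$, which is a Fibonacci or Lucas number of index $n+s+3$, with the factor $5^{n/2}$ absorbing an extra $\sqrt{5}$ when $n$ is odd to give $5^{(n\pm1)/2}$ as appropriate. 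Dividing the resulting equation by the leading coefficient $\mp2\sqrt{5}\cdot 3^{n+1}$ (Fibonacci case) or $\mp2\cdot 3^{n+1}$ (Lucas case) produced on the left-hand side, and multiplying by $6$, yields the common factor $(2/3)^{n}$.

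The one genuine bookkeeping step---the place where a sign error is most plausible---is converting the negative-index term $F_{s+3-n}$ (respectively $L_{s+3-n}$) that emerges from the crossed group into the positive-index $F_{n-s-3}$ (respectively $L_{n-s-3}$) that appears in the statement. Using $F_{-m}=(-1)^{m-1}F_{m}$ and $L_{-m}=(-1)^{m}L_{m}$, one obtains $F_{s+3-n}=(-1)^{n-s}F_{n-s-3}$ and $L_{s+3-n}=(-1)^{n-s-3}L_{n-s-3}$. Combining these with the $(-1)^{n}$ inherited from the crossed group and with the sign in the leading coefficient, one checks that in the Fibonacci identity the residual factor is $(-1)^{s+1}$ for both parities of $n$, and in the Lucas identity it is $(-1)^{s}$ for both parities, matching the statement precisely. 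Everything else is routine Binet manipulation entirely parallel to the two immediately preceding theorems, so the proof would conclude exactly in their style: ``the calculations are omitted.''
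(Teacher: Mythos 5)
Your proposal is correct and coincides exactly with the paper's proof, which reads in full: ``Set $x=r=3$, $z=-1$, $j=1$ in \eqref{eq.uglu0yf} and \eqref{eq.e120i21}, use~\eqref{eq.bt93a3y} and simplify.'' Your sign bookkeeping (the crossed group contributing $(-1)^s\sqrt 5\,F_{n-s-3}$, which becomes $(-1)^{s+1}$ after dividing by the negative leading coefficient $-2\sqrt5\cdot3^{n+1}$) checks out.
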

\begin{proof} Set $x=r=3$, $z=-1$, $j=1$ in \eqref{eq.uglu0yf} and \eqref{eq.e120i21}, use~\eqref{eq.bt93a3y} and simplify.
\end{proof}
\begin{theorem} For non-negative integer $n$ and any integer $s$,
	\begin{align*}
	\sum_{k = 1}^{\left\lceil {n}/{2} \right\rceil} \binom {n}{2k-1} 9^{k} F_{6k + s} &= 2^{n-1}
	\begin{cases}
	F_{4n + 3 + s} - 5^{{n}/{2}}F_{2n+s+3}, & \text{\rm$n$ even;}\\  F_{4n + 3 + s} + 5^{({n-1})/{2}}L_{2n+s+3}, & \text{\rm$n$ odd.} \end{cases}
	\\
	\sum_{k = 1}^{\left\lceil {n}/{2} \right\rceil} \binom {n}{2k-1} 9^{k} L_{6k + s}& = {2^{n-1}}
	\begin{cases}
	L_{4n + 3 + s} - 5^{{n}/{2}}L_{2n+s+3}, & \text{\rm$n$ even;}\\ L_{4n + 3 + s} + 5^{({n+1})/{2}}F_{2n+s+3}, & \text{\rm$n$ odd.} \end{cases}
	\end{align*}
\end{theorem}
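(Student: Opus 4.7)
The strategy directly parallels that of the immediately preceding theorem (the $\sum\binom{n}{2k}9^k F_{6k+s}$ identities), but with the odd-index fundamental identities \eqref{eq.uglu0yf} and \eqref{eq.e120i21} replacing \eqref{eq.upvt5hu} and \eqref{eq.b7rf05c}. The plan is to set $x=j=1$ and $z=r=3$ in \eqref{eq.uglu0yf} and \eqref{eq.e120i21}, so that the left-hand sides become (up to the constant prefactor $2\sqrt 5$ or $2$) the sums appearing in the statement, while the right-hand sides turn into linear combinations of powers of $1\pm 3\alpha^3$ and $1\pm 3\beta^3$.

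Next I would substitute the four evaluations from \eqref{eq.lapedpo}, namely $1+3\alpha^3=2\alpha^4$, $1-3\alpha^3=-2\alpha^2\sqrt5$, $1+3\beta^3=2\beta^4$ and $1-3\beta^3=2\beta^2\sqrt5$. After raising to the $n$-th power, the expanded right-hand sides split into a main term proportional to $\alpha^{4n+3+s}\mp\beta^{4n+3+s}$, which Binet \eqref{binet} converts to $\sqrt5\,F_{4n+3+s}$ or $L_{4n+3+s}$, and a cross term proportional to $(-1)^n\alpha^{2n+3+s}\mp\beta^{2n+3+s}$ carrying a factor of $5^{n/2}$ coming from $(\pm 2\alpha^2\sqrt5)^n$ and $(2\beta^2\sqrt5)^n$.

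The parity split in the conclusion is driven entirely by this cross term. When $n$ is even, $(-1)^n=1$ turns the combination into a pure Fibonacci or Lucas expression and $5^{n/2}$ is already an integer, yielding the upper branches. When $n$ is odd, $(-1)^n=-1$ swaps the Fibonacci and Lucas outputs of the Binet conversion, and one stray factor of $\sqrt5$ merges either with the $\sqrt5\,F$ on the Fibonacci side (promoting $F\to L$ and $5^{n/2}\to 5^{(n-1)/2}$) or with a Lucas combination on the Lucas side (promoting $L\to 5F$ and $5^{n/2}\to 5^{(n+1)/2}$). Dividing through by $2\sqrt5$ (Fibonacci case) or $2$ (Lucas case) produces the stated identities. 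The only real obstacle is the careful bookkeeping of the $(-1)^n$ and $\sqrt5$ factors across the two parity cases and between the Fibonacci and Lucas versions; once the template from the analogous even-index theorem is reused, the rest is routine algebra.
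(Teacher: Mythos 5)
Your plan is exactly the paper's proof: set $x=j=1$, $z=r=3$ in \eqref{eq.uglu0yf} and \eqref{eq.e120i21}, invoke \eqref{eq.lapedpo}, and sort the resulting $(-1)^n$ and $\sqrt5$ factors by the parity of $n$. One bookkeeping point you gloss over: with $z=3$ the weight in \eqref{eq.uglu0yf} is $z^{2k-1}=3^{2k-1}=9^{k}/3$, not $9^{k}$, so the left-hand side is the stated sum divided by $3$ as well as by $2\sqrt5$ (resp.\ $2$); carried through, your computation yields $3\cdot 2^{n-1}(\cdots)$ on the right rather than $2^{n-1}(\cdots)$ (check $n=1$, $s=0$: the sum is $9F_6=72$ while the stated right-hand side is $F_7+L_5=24$). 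This points to a missing factor of $3$ in the theorem as printed rather than to a defect in your method, but your assertion that the only prefactor is $2\sqrt5$ or $2$ is where the discrepancy hides.
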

\begin{proof} Set $x=j=1$ and $z=r=3$ in \eqref{eq.uglu0yf} and \eqref{eq.e120i21}, use \eqref{eq.lapedpo} and simplify.
\end{proof}

\section{Binomial summation  identities, Part 3}

In this section we introduce the following results.
\begin{lemma} For integers $j$, $r$, $s$, $n$ with $n$ non-negative, we have the identi\-ties
	\begin{equation}
	\label{eq.ag3ooim}
	\begin{split}
	\sqrt 5 \sum_{k = 0}^{\left\lfloor {n}/{2} \right\rfloor } (- 1)^k \binom {n}{2k}&x^{n - 2k} z^{2k} F_{j(2rk + s)} \\
	& = \alpha^{js} \sqrt{(x^2  + \alpha^{2jr} z^2 )^{n}} \cos \left( {n\arctan \Bigl( {\frac{{\alpha^{jr} z}}{x}} \Bigr)} \right)\\
	&\quad - \beta^{js} \sqrt{(x^2 + \beta^{2jr} z^2 )^{n}} \cos \left( {n\arctan \Bigl( {\frac{{\beta^{jr} z}}{x}} \Bigr)} \right),
	\end{split}
	\end{equation}
	\begin{equation}
	\label{eq.g0km87g}
	\begin{split}
	\sum_{k = 0}^{\left\lfloor {n}/{2} \right\rfloor } (- 1)^k \binom {n}{2k}&x^{n - 2k} z^{2k} L_{j(2rk + s)} \\
	& = \alpha^{js} \sqrt{(x^2  + \alpha^{2jr} z^2 )^{n}} \cos \left( {n\arctan \Bigl( {\frac{{\alpha^{jr} z}}{x}} \Bigr)} \right)\\
	&\quad + \beta^{js} \sqrt{(x^2  + \beta^{2jr} z^2 )^{n}} \cos \left( {n\arctan \Bigl( {\frac{{\beta^{jr} z}}{x}} \Bigr)} \right),
	\end{split}
	\end{equation}
	\begin{equation*}
	\label{eq.dtvtoob}
	\begin{split}
	\sqrt 5 \sum_{k = 1}^{\left\lceil {n}/{2} \right\rceil } (- 1)^{k - 1}&\binom {n}{2k - 1}x^{n - 2k + 1} z^{2k - 1} F_{j(2rk + s)} \\
	&= \alpha^{js} \sqrt{(x^2 + \alpha^{2jr} z^2 )^{n}} \sin \left( {n\arctan \Bigl( {\frac{{\alpha^{jr} z}}{x}} \Bigr)} \right)\\
	&\quad- \beta^{js} \sqrt{(x^2 + \beta^{2jr} z^2 )^{n}}\sin \left( {n\arctan \Bigl( {\frac{{\beta^{jr} z}}{x}} \Bigr)} \right),
	\end{split}
	\end{equation*}
	\begin{equation*}
	\begin{split}
	\sum_{k = 1}^{\left\lceil {n}/{2} \right\rceil } ( - 1)^{k - 1} &\binom {n}{2k - 1}x^{n - 2k + 1} z^{2k - 1} L_{j(2rk + s)} \\
	&= \alpha^{js} \sqrt{(x^2 + \alpha^{2jr} z^2 )^{n}} \sin \left( {n\arctan \Bigl( {\frac{{\alpha^{jr} z}}{x}} \Bigr)} \right)\\
	&\quad + \beta^{js} \sqrt{(x^2 + \beta^{2jr} z^2 )^{n}} \sin \left( {n\arctan \Bigl( {\frac{{\beta^{jr} z}}{x}} \Bigr)} \right).
	\end{split}
	\end{equation*}
\end{lemma}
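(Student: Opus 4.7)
The plan is to apply the De Moivre-type identities obtained by comparing
\[
(x+iy)^n = \sum_{k=0}^{n}\binom{n}{k}x^{n-k}(iy)^{k}
\quad\text{with}\quad
(x+iy)^n = (x^2+y^2)^{n/2}\bigl(\cos(n\theta)+i\sin(n\theta)\bigr),
\]
where $\theta=\arctan(y/x)$. Separating real and imaginary parts yields
\begin{align*}
\sum_{k=0}^{\lfloor n/2\rfloor}(-1)^{k}\binom{n}{2k}x^{n-2k}y^{2k} &= (x^2+y^2)^{n/2}\cos(n\theta),\\
\sum_{k=1}^{\lceil n/2\rceil}(-1)^{k-1}\binom{n}{2k-1}x^{n-2k+1}y^{2k-1} &= (x^2+y^2)^{n/2}\sin(n\theta).
\end{align*}
Conceptually, these can be viewed as the real- and imaginary-part specialisations of Lemma~1 applied to the complex-valued template $h(z)=z^{s}(x+iz^{r})^{n}$ (and its conjugate), in direct analogy with the choice $h(z)=z^{s}(x\pm z^{r})^{n}$ used in \eqref{eq.upvt5hu}--\eqref{eq.e120i21}.

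To establish the cosine identity \eqref{eq.ag3ooim}, substitute $y=\alpha^{jr}z$ in the first display, multiply through by $\alpha^{js}$, and combine the exponents using $\alpha^{js}\cdot(\alpha^{jr})^{2k}=\alpha^{j(2rk+s)}$. Repeat the same step with $\beta$ in place of $\alpha$, subtract the two resulting equations, and divide by $\alpha-\beta=\sqrt 5$; Binet's formula $F_{m}=(\alpha^{m}-\beta^{m})/\sqrt 5$ converts the right-hand sum into the $F$-sum appearing on the left of \eqref{eq.ag3ooim}. Adding the two equations in place of subtracting, and using $L_{m}=\alpha^{m}+\beta^{m}$, produces \eqref{eq.g0km87g}. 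The two sine identities follow by the identical two-step procedure, but starting from the second (imaginary-part) display above rather than the first.

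No substantive obstacle arises: the computation is a mechanical combination of De Moivre's formula with the Binet formulas, entirely parallel to the proofs of the fundamental lemmas \eqref{eq.upvt5hu}--\eqref{eq.e120i21}. The only piece of bookkeeping that needs care is matching the power of $\alpha^{j}$ produced by the substitution $y=\alpha^{jr}z$ with the subscript of the Fibonacci or Lucas number on the left-hand side; once this matching is set up for the even-index (cosine) sums, the odd-index (sine) sums are handled identically with the stated prefactors.
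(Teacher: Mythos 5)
Your proposal is correct and is essentially the paper's own argument: the authors likewise start from the De Moivre expansion of $(x+iz^r)^n$ into its alternating even- and odd-index binomial sums and then feed it through Lemma~1 (i.e., substitute $\alpha^j z$ and $\beta^j z$ and combine via the Binet formulas), which is exactly the substitution-and-subtract/add step you carry out by hand. Your direct bookkeeping with $y=\alpha^{jr}z$ in fact matches the stated $z^{2k}$ powers slightly more cleanly than the paper's formal identification of $f(k)=2rk+s$, but the route is the same.
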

\begin{proof} In the identity
	\[
	z^s \sqrt{(x^2 + z^{2r} )^{n}} \cos \left(n\arctan \Bigl(\frac{z^r}{x}\Bigr)\right) = \sum_{k = 0}^{\left\lfloor {n}/{2} \right\rfloor }
	(- 1)^k \binom {n}{2k}x^{n - 2k} z^{2rk + s},
	\]
	identify $h(z) = z^s \sqrt{(x^2 + z^{2r} )^{n}} \cos \big(n\arctan (\frac{z^r}{x})\big)$, $g(k) = (- 1)^k \binom {n}{2k}x^{n - 2k}$, $f(k) = 2rk + s$, $c_1  = 0$, $c_2  = \left\lfloor {n/2} \right\rfloor$, and use these in \eqref{eq.y4g4no6} and \eqref{eq.r2vyn7u}. This proves \eqref{eq.ag3ooim} and \eqref{eq.g0km87g}.
	
	For the other two identities use
	\[
	z^{s+r} \sqrt{(x^2  + z^{2r} )^{n}} \sin \left(n\arctan \Bigl(\frac{z^r}{x}\Bigr)\right) = \sum_{k = 1}^{\left\lceil {n}/{2} \right\rceil } {( - 1)^{k - 1} \binom n{2k - 1}x^{n - 2k + 1} z^{2rk + s} },
	\]
	and identify $h(z) = z^{s+r} \sqrt{(x^2 + z^{2r} )^{n}} \sin\big(n\arctan (\frac{z^r}{x})\big)$, $f(k) = 2rk + s$, $c_1  = 1$, $c_2  = \left\lceil {n/2} \right\rceil$ and $g(k) = (- 1)^{k-1} \binom n{2k - 1}x^{n - 2k + 1}$.
\end{proof}

We give an example. The next lemma proves useful.
\begin{lemma}
	For integers $r$ and $n$,
	\begin{align*}
	\cos \big(2n\arctan (\alpha ^r )\big)& = (- 1)^n \cos \big(2n\arctan (\beta ^r )\big),\\ 
	\cos \big((2n - 1)\arctan (\alpha ^r )\big)& = (- 1)^{n + r + 1} \sin\big ((2n - 1)\arctan (\beta ^r )\big),\\
	\sin\big (2n\arctan (\alpha ^r )\big) &= (- 1)^{n + r + 1} \sin \big(2n\arctan (\beta ^r )\big),\\
	\sin \big((2n - 1)\arctan (\alpha ^r )\big)& = (- 1)^{n - 1} \cos \big((2n - 1)\arctan (\beta ^r )\big).
	\end{align*}
\end{lemma}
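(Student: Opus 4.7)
My plan is to reduce all four identities to a single relation between $A := \arctan(\alpha^r)$ and $B := \arctan(\beta^r)$, namely
\[
A + (-1)^r B = \frac{\pi}{2}.
\]
To derive this I would use $\alpha\beta = -1$, which yields $\beta^r = (-1)^r/\alpha^r$, and observe that $\alpha > 0$ makes $\alpha^r$ positive for every integer $r$. Hence the classical identity $\arctan(x) + \arctan(1/x) = \pi/2$ applies with $x = \alpha^r$. When $r$ is even, $\beta^r = 1/\alpha^r > 0$ and we get $A + B = \pi/2$ directly; when $r$ is odd, $\beta^r = -1/\alpha^r < 0$, so $\arctan(\beta^r) = -\arctan(1/\alpha^r) = A - \pi/2$, giving $A - B = \pi/2$. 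The two cases unify as $A + (-1)^r B = \pi/2$.

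Multiplying the key relation by $2n$ and by $2n-1$ gives
\[
2nA = n\pi - 2n(-1)^r B, \qquad (2n-1)A = \frac{(2n-1)\pi}{2} - (2n-1)(-1)^r B.
\]
Applying the cosine/sine subtraction formulas and plugging in $\cos(n\pi) = (-1)^n$, $\sin(n\pi) = 0$, $\cos((2n-1)\pi/2) = 0$, and $\sin((2n-1)\pi/2) = (-1)^{n-1}$, one of the two cross-terms always vanishes. In the $2n$ case the $\sin(n\pi)$ term dies, so cosines map to cosines and sines to sines; in the $2n-1$ case the $\cos((2n-1)\pi/2)$ term dies, so cosines swap to sines and vice versa. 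Since cosine is even and sine is odd, I can then use $\cos\!\big(2n(-1)^r B\big) = \cos(2nB)$ and $\sin\!\big(2n(-1)^r B\big) = (-1)^r \sin(2nB)$ (and likewise with $2n-1$) to strip the $(-1)^r$ out of the argument. Assembling the sign factors produces exactly $(-1)^n$, $(-1)^{n+r+1}$, $(-1)^{n+r+1}$, and $(-1)^{n-1}$ for the four displayed identities.

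The argument needs none of the Binet-level machinery from earlier sections; the only real difficulty is the sign bookkeeping. Care is required because the identity $\arctan(x)+\arctan(1/x) = \pm\pi/2$ changes sign with $x$, forcing a case split on the parity of $r$; and one must correctly combine three separate sources of $\pm 1$: the $(-1)^r$ escaping from the odd function $\sin$, the $(-1)^n$ or $(-1)^{n-1}$ from the evaluation at integer multiples of $\pi/2$, and the implicit minus signs from the subtraction formula itself. Once these are reconciled, all four identities drop out on a single line each.
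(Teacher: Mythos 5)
Your proof is correct and follows essentially the same route as the paper: both establish $\arctan(\alpha^r)=\frac{\pi}{2}-(-1)^r\arctan(\beta^r)$ from $\arctan x+\arctan(1/x)=\pi/2$ (for $x>0$) together with $\alpha\beta=-1$, and then apply the trigonometric addition formulas. Your write-up merely makes the sign bookkeeping explicit where the paper leaves it to the reader.
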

\begin{proof}
	On account of identities
	$\arctan x + \arctan(1/x) = \pi/2$, if $x>0$, and $\alpha^r=(-1)^r\beta^{-r}$,
	we have
	$$
	\arctan(\alpha^r) = \frac\pi2 - (-1)^r\arctan (\beta^r), 
	$$
	from which the identities follow upon applying addition formulas for trigonometric functions.
\end{proof}
\begin{theorem} For non-negative integer $n$ and integers $j$, $r$, $s$,
	\begin{align}
	\sum_{k = 0}^{n} (- 1)^k \binom {2n}{2k} F_{j(2rk + s)}
&	= 
	\begin{cases}
	\sqrt{5^n} F_{jr}^n F_{jrn + js} \cos \big(2n\arctan (\alpha ^{jr})\big), & \text{\rm $jr$ odd;}\\ 
	L_{jr}^n F_{jrn + js} \cos \big(2n\arctan (\alpha ^{jr} )\big), & \text{\rm $jr$ even, $n$ even;}\\ 
	\frac{1}{\sqrt5}L_{jr}^n L_{jrn + js} \cos\big (2n\arctan (\alpha ^{jr} )\big), & \text{\rm $jr$ even, $n$ odd,} \end{cases}\nonumber\\
	\label{eq.vfsdrta}
	\sum_{k = 0}^{n} (- 1)^k \binom {2n}{2k} L_{j(2rk + s)} 
&	= 
	\begin{cases}
	\sqrt{5^n} F_{jr}^n L_{jrn + js} \cos \big(2n\arctan (\alpha ^{jr} )\big), & \text{\rm $jr$ odd;}\\
	L_{jr}^n L_{jrn + js} \cos \big(2n\arctan (\alpha ^{jr} )\big), & \text{\rm $jr$ even, $n$ even;}\\
	\sqrt5L_{jr}^n F_{jrn + js} \cos \big(2n\arctan (\alpha ^{jr} )\big), & \text{\rm $jr$ even, $n$ odd.} \end{cases}
	\end{align}
\end{theorem}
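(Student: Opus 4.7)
The plan is to specialize the identities \eqref{eq.ag3ooim} and \eqref{eq.g0km87g} by taking $x=z=1$ and replacing $n$ by $2n$. Since $2n$ is even, the outer square roots collapse to $(1+\alpha^{2jr})^n$ and $(1+\beta^{2jr})^n$ (both bases are manifestly positive reals), so the two specializations read
\[
\sqrt 5\sum_{k=0}^n(-1)^k\binom{2n}{2k}F_{j(2rk+s)} = \alpha^{js}(1+\alpha^{2jr})^n C_\alpha - \beta^{js}(1+\beta^{2jr})^n C_\beta
\]
and
\[
\sum_{k=0}^n(-1)^k\binom{2n}{2k}L_{j(2rk+s)} = \alpha^{js}(1+\alpha^{2jr})^n C_\alpha + \beta^{js}(1+\beta^{2jr})^n C_\beta,
\]
where I abbreviate $C_\alpha=\cos\bigl(2n\arctan(\alpha^{jr})\bigr)$ and $C_\beta=\cos\bigl(2n\arctan(\beta^{jr})\bigr)$.

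Next I would evaluate $1+\alpha^{2jr}$ via \eqref{eq.z1yfyw6} with $p=0$ and $q=jr$: the value is $\alpha^{jr}\sqrt 5\,F_{jr}$ when $jr$ is odd, and $\alpha^{jr}L_{jr}$ when $jr$ is even. For the companion quantity I would use $\beta^{2jr}=\alpha^{-2jr}$ together with $(-1)^{jr}\alpha^{-jr}=\beta^{jr}$ to obtain $1+\beta^{2jr}=\beta^{jr}L_{jr}$ in the even case and $1+\beta^{2jr}=-\beta^{jr}\sqrt 5\,F_{jr}$ in the odd case; in the latter, raising to the $n$th power contributes an extra factor $(-1)^n$. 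The auxiliary lemma stated just before the theorem then supplies $C_\beta=(-1)^nC_\alpha$.

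Substituting these evaluations and factoring $C_\alpha$ out, the residual bracket is $\alpha^{jrn+js}\pm\beta^{jrn+js}$, which Binet identifies as $\sqrt 5\,F_{jrn+js}$ or $L_{jrn+js}$ according to the sign. When $jr$ is odd, the $(-1)^n$ from $(1+\beta^{2jr})^n$ and the $(-1)^n$ from $C_\beta$ cancel, giving $\sqrt 5\,F_{jrn+js}$ uniformly and yielding the first case (with prefactor $\sqrt{5^n}F_{jr}^n$ after dividing by $\sqrt 5$). When $jr$ is even only the $\cos$-relation contributes a sign, so the final bracket is $\alpha^{jrn+js}-\beta^{jrn+js}$ for $n$ even and $\alpha^{jrn+js}+\beta^{jrn+js}$ for $n$ odd, producing the remaining two cases. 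The Lucas sum is handled identically, with the opposite sign between the $\alpha$- and $\beta$-pieces flipping the roles of $F_{jrn+js}$ and $L_{jrn+js}$ in the bracket. The main obstacle is purely bookkeeping: tracking the two independent sources of $(-1)^n$ across all parities and balancing the $\sqrt 5$ factors so that each case emerges with the advertised prefactor $\sqrt{5^n}F_{jr}^n$, $L_{jr}^n$, or $\tfrac{1}{\sqrt 5}L_{jr}^n$.
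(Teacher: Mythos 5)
Your proposal is correct and follows essentially the same route as the paper: specialize \eqref{eq.ag3ooim} and \eqref{eq.g0km87g} at $x=z=1$ with $n\mapsto 2n$, evaluate $1+\alpha^{2jr}$ and $1+\beta^{2jr}$ by parity of $jr$ (the paper cites \eqref{eq.u9uuagc} and \eqref{eq.syrjcay}, which encode the same facts you extract from \eqref{eq.z1yfyw6}), invoke the cosine relation $C_\beta=(-1)^nC_\alpha$, and finish with Binet. The sign bookkeeping you describe checks out in all cases.
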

\begin{proof} The choice $x=z=1$ in \eqref{eq.ag3ooim} and \eqref{eq.g0km87g}, noting also identities~\eqref{eq.u9uuagc} and \eqref{eq.syrjcay}, with $q=jr$, $jr$ odd, gives
	\begin{align*}
	\sum_{k = 0}^{n} (- 1)^k \binom {2n}{2k}F_{j(2rk + s)} &= 5^{(n-1)/2} F_{jr}^n \big(\alpha^{jrn + js} - \beta^{jrn + js}\big)\cos \big(2n\arctan (\alpha ^{jr} )\big),\\
	\sum_{k = 0}^{n} (- 1)^k \binom {2n}{2k}L_{j(2rk + s)} &= 5^{n/2} F_{jr}^n \big(\alpha^{jrn + js} + \beta ^{jrn + js}\big)\cos \big(2n\arctan (\alpha ^{jr} )\big),
	\end{align*}
	from which the stated identities for $jr$ odd follow. 
	
	Similarly, $x=z=1$ in \eqref{eq.ag3ooim} and \eqref{eq.g0km87g}, with $jr$ even, gives
	\begin{align*}
	\sum_{k = 0}^{n} (- 1)^k \binom {2n}{2k}F_{j(2rk + s)} &= \frac{L_{jr}^n}{\sqrt 5 } \big(\alpha^{jrn + js} - (- 1)^n \beta^{jrn + js}\big )\cos \big(2n\arctan (\alpha ^{jr} )\big),\\
	\sum_{k = 0}^{n} (- 1)^k \binom {2n}{2k}L_{j(2rk + s)}& = L_{jr}^n \big(\alpha^{jrn + js} + (- 1)^n \beta^{jrn + js} \big)\cos \big(2n\arctan (\alpha ^{jr} )\big),
	\end{align*}
	and hence the stated identities for $jr$ even.
\end{proof}
So far in this paper we have been concerned with identities that are linear in the Fibonacci and Lucas numbers. In the remaining three sections we will present results containing higher order binomial Fibonacci identities.

\section{Quadratic binomial summation  identities}

Here we will derive a pair of binomial identities involving products of Fibonacci and Lucas numbers. We require the results stated in the next two lemmas.
\begin{lemma}\label{lem.lk77qra}
	If $k$, $r$ and $s$ are integers, then
	\begin{align}
	5F_{k + r} F_{k + s}  &= L_{2k + r + s}  - ( - 1)^{k + s} L_{r - s},\label{eq.f5dydlo}\\ 
	L_{k + r} F_{k + s}  &= F_{2k + r + s}  - ( - 1)^{k + s} F_{r - s},\label{eq.eh9zyvi}\\ 
	L_{k + r} L_{k + s} & = L_{2k + r + s}  + ( - 1)^{k + s} L_{r - s}\label{eq.z9rhta5}.
	\end{align}
\end{lemma}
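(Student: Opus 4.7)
The plan is to prove each identity by direct substitution into Vajda's identities from Lemma \ref{lem.crcmndb}. Recall that Vajda gives
\[
L_s L_r = L_{r+s} + (-1)^s L_{r-s},\quad L_{r+s} - (-1)^s L_{r-s} = 5 F_s F_r,\quad L_r F_s = F_{r+s} - (-1)^s F_{r-s}.
\]
The first step is to replace $r$ with $k+r$ and $s$ with $k+s$ in each of these three identities. Then $(k+r)+(k+s) = 2k+r+s$ appears in the first subscript on the right, $(k+r)-(k+s) = r-s$ appears in the second subscript on the right (note that the dependence on $k$ drops out of this difference, which is exactly why the substitution works cleanly), and the sign $(-1)^s$ becomes $(-1)^{k+s}$. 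This substitution produces \eqref{eq.f5dydlo}, \eqref{eq.eh9zyvi} and \eqref{eq.z9rhta5} in one line each.

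A self-contained alternative, which avoids appealing to Vajda, is to apply the Binet formulas \eqref{binet} directly. For instance, to obtain \eqref{eq.f5dydlo} one expands
\[
5 F_{k+r} F_{k+s} = (\alpha^{k+r} - \beta^{k+r})(\alpha^{k+s} - \beta^{k+s}) = \alpha^{2k+r+s} + \beta^{2k+r+s} - \alpha^{k+r}\beta^{k+s} - \alpha^{k+s}\beta^{k+r},
\]
and collects the cross terms using $\alpha\beta = -1$, namely $\alpha^{k+r}\beta^{k+s} = (\alpha\beta)^{k+s}\alpha^{r-s} = (-1)^{k+s}\alpha^{r-s}$ and symmetrically $\alpha^{k+s}\beta^{k+r} = (-1)^{k+s}\beta^{r-s}$. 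The remaining expression is $L_{2k+r+s} - (-1)^{k+s} L_{r-s}$. The proofs of \eqref{eq.eh9zyvi} and \eqref{eq.z9rhta5} are completely analogous, the only differences being which signs appear in the expansion and whether one divides by $\alpha-\beta$ at the end.

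There is essentially no obstacle here; the result is a routine product-to-sum formula, and the only thing to verify is that the cross-term exponents line up to give $r-s$ (independent of $k$) and that the sign parity works out to $(-1)^{k+s}$. Since the paper has already established Lemma \ref{lem.crcmndb}, I would present the proof as a one-line substitution into Vajda's identities rather than reprove them via Binet.
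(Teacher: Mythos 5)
Your proposal is correct and matches the paper's approach: the paper's proof simply observes that these are variations on Vajda's identities (the paper's Lemma \ref{lem.crcmndb}), which is exactly your substitution $r\mapsto k+r$, $s\mapsto k+s$. Your write-up is in fact more explicit than the paper's one-line citation, and the Binet-formula alternative you sketch is also valid (using $(\alpha-\beta)^2=5$), but neither adds anything essentially new.
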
 \begin{proof} These are variations on Vajda \cite[Identities (15b), (17a), (17b)]{vajda}.
\end{proof}
\begin{lemma}
	If $n$ is a positive integer, then
	\begin{align}\label{eq.wj9bwlz}
	\sum_{k = 0}^{\left\lfloor {n}/{2} \right\rfloor } {\binom n{2k}( - 1)^k } & = (\sqrt 2 )^n \cos\Big(\frac{n\pi}{4}\Big),\\
	\label{eq.hqs0jwi}
	\sum_{k = 0}^{\lfloor n/2\rfloor} \binom{n}{2k}&  = 2^{n-1} ,\\
	\label{eq.k1iddb5}
	\sum_{k = 0}^{\lfloor n/2\rfloor} \binom{n - 1}{2k} & = 2^{n - 2},\quad n\geq2, 
\end{align}
\begin{align}
	\label{eq.rkbacph}
	\sum_{k = 0}^{\left\lfloor {n}/{2} \right\rfloor } {\binom n{2k} ( - 4)^k } & = \sqrt{ 5^n} \cos (n\arctan 2),\\
	\label{eq.cykxsdk}
	\sum_{k = 0}^{\left\lfloor {n}/{2} \right\rfloor } {\binom n{2k} ( - 5)^k }& = \sqrt {6^n} \cos \big(n\arctan\sqrt5\big).
	\end{align}
\end{lemma}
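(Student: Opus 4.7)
The plan is to derive all five identities from a single binomial-theorem master formula and then specialize the parameter. The starting point is the even/odd split of the binomial expansion:
\begin{equation*}
\tfrac12\bigl((x+z)^n + (x-z)^n\bigr) = \sum_{k=0}^{\lfloor n/2\rfloor}\binom{n}{2k}x^{n-2k}z^{2k}.
\end{equation*}
Specializing $x=z=1$ immediately gives $\sum_k\binom{n}{2k}=2^{n-1}$, proving \eqref{eq.hqs0jwi}, and the same specialization with $n$ replaced by $n-1$ yields $\sum_{k=0}^{\lfloor(n-1)/2\rfloor}\binom{n-1}{2k}=2^{n-2}$, which for $n\ge 2$ agrees with \eqref{eq.k1iddb5} because the extra terms (if $\lfloor n/2\rfloor>\lfloor(n-1)/2\rfloor$) involve $\binom{n-1}{2k}$ with $2k\ge n$ and therefore vanish.

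For the three identities carrying alternating signs and a trigonometric right-hand side, the plan is to put $x=1$ and $z=iy$ in the master formula, so that $z^{2k}=(-1)^ky^{2k}$ and
\begin{equation*}
\sum_{k=0}^{\lfloor n/2\rfloor}\binom{n}{2k}(-1)^k y^{2k} = \tfrac12\bigl((1+iy)^n+(1-iy)^n\bigr) = \operatorname{Re}\bigl((1+iy)^n\bigr).
\end{equation*}
Writing $1+iy$ in polar form as $\sqrt{1+y^2}\,e^{i\arctan y}$ and applying De Moivre's theorem gives
\begin{equation*}
\operatorname{Re}\bigl((1+iy)^n\bigr) = (1+y^2)^{n/2}\cos\bigl(n\arctan y\bigr).
\end{equation*}
Setting $y=1$ yields \eqref{eq.wj9bwlz} since $\arctan 1=\pi/4$ and $1+y^2=2$; setting $y=2$ yields \eqref{eq.rkbacph} since $(-1)^ky^{2k}=(-4)^k$ and $1+y^2=5$; setting $y=\sqrt5$ yields \eqref{eq.cykxsdk} since $(-1)^ky^{2k}=(-5)^k$ and $1+y^2=6$.

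None of the steps is a genuine obstacle. The only thing to be careful about is the range of summation in \eqref{eq.k1iddb5}: the stated upper limit is $\lfloor n/2\rfloor$ rather than $\lfloor(n-1)/2\rfloor$, so one must remark that the extra index $k=n/2$ (which appears when $n$ is even) contributes $\binom{n-1}{n}=0$, so the sum is unaffected. Everything else reduces to the binomial theorem and De Moivre's formula, so I would present the proof in two short blocks: one sentence deriving the master identity, one block handling the three cases $y\in\{1,2,\sqrt5\}$ simultaneously through the complex-number argument, and one line handling the two purely binomial sums \eqref{eq.hqs0jwi}, \eqref{eq.k1iddb5}.
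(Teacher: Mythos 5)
Your proof is correct and follows essentially the same route as the paper: the paper specializes its master identity $2\sum_{k}\binom{n}{2k}x^{n-2k}z^{2k}=(x+z)^n+(x-z)^n$ at $z=i$ with $x=1,\tfrac12,\tfrac{1}{\sqrt5}$ (equivalent by homogeneity to your $x=1$, $z=iy$ with $y=1,2,\sqrt5$) and at $x=z=1$, invoking the same polar-form/De Moivre evaluation of $\operatorname{Re}\bigl((1+iy)^n\bigr)$. Your extra remark about the vanishing term $\binom{n-1}{2k}$ for $2k\ge n$ in \eqref{eq.k1iddb5} is a detail the paper glosses over, and it is handled correctly.
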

\begin{proof} Setting $x=1$, $z=i$ in~\eqref{eq.b5o7ghm} produces \eqref{eq.wj9bwlz}, while $x=z=1$ gives $2\sum\limits_{k = 0}^{\left\lfloor {n/2} \right\rfloor } \binom{n}{2k}  = 2^n$, from which \eqref{eq.hqs0jwi} and \eqref{eq.k1iddb5} follow. Use of $x=\frac12$, $z=i$, $r=1$, $s=0$ in~\eqref{eq.b5o7ghm} proves~\eqref{eq.rkbacph} while $x=\frac{1}{\sqrt5}$, $z=i$, $r=1$, $s=0$ produces~\eqref{eq.cykxsdk}.
\end{proof}
\begin{theorem}
	If $n$ is a non-negative integer and $r$ and $s$ are integers, then
	\begin{align}\label{eq.jufkvyw}
	10\sum_{k = 0}^{\left\lfloor {n/2} \right\rfloor } {\binom n{2k}F_{k + r} F_{k + s} }  &= L_{2n + r + s}  + ( - 1)^{r + s} L_{n - r - s}- ( - 1)^s 2^{{n}/{2}+1} \cos\Big(\frac{n\pi}{4}\Big)L_{r - s},\\
	\label{eq.jt8fb22}
	2\sum_{k = 0}^{\left\lfloor {n/2} \right\rfloor } {\binom n{2k}L_{k + r} F_{k + s} }  &= F_{2n + r + s}  - ( - 1)^{r + s} F_{n - r - s}- ( - 1)^s 2^{{n}/{2}+1} \cos\Big(\frac{n\pi}{4}\Big)F_{r - s},\\
	\label{eq.xjn6jyy}
	2\sum_{k = 0}^{\left\lfloor {n/2} \right\rfloor } {\binom n{2k}L_{k + r} L_{k + s} }  &= L_{2n + r + s}  + ( - 1)^{r + s} L_{n - r - s}
	 + ( - 1)^s 2^{{n}/{2}+1} \cos\Big(\frac{n\pi}{4}\Big)L_{r - s}.
	\end{align}
\end{theorem}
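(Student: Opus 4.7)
The plan is to reduce each of the three identities to the linear sums already established (namely \eqref{eq.ffgnfzm} and \eqref{eq.j0dzhsw}) together with the elementary binomial sum \eqref{eq.wj9bwlz}. The key observation is that Lemma~\ref{lem.lk77qra} converts every bilinear term appearing on the left-hand sides into a single Fibonacci or Lucas number of argument $2k+r+s$ plus a term in which the $k$-dependence has been pushed into an alternating sign $(-1)^{k+s}$.

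Concretely, I would begin with \eqref{eq.jufkvyw}. Multiplying the sum by $10$ and applying \eqref{eq.f5dydlo} gives
\begin{equation*}
10\sum_{k=0}^{\lfloor n/2\rfloor}\binom{n}{2k}F_{k+r}F_{k+s}
= 2\sum_{k=0}^{\lfloor n/2\rfloor}\binom{n}{2k}L_{2k+r+s}
- 2(-1)^{s}L_{r-s}\sum_{k=0}^{\lfloor n/2\rfloor}\binom{n}{2k}(-1)^{k}.
\end{equation*}
Now \eqref{eq.j0dzhsw} with the replacement $s\mapsto r+s$ evaluates the first sum as $L_{2n+r+s}+(-1)^{r+s}L_{n-r-s}$, while \eqref{eq.wj9bwlz} evaluates the second sum as $(\sqrt{2})^{n}\cos(n\pi/4)$. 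Combining these and using $2\cdot2^{n/2}=2^{n/2+1}$ yields exactly the right-hand side of \eqref{eq.jufkvyw}.

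The proofs of \eqref{eq.jt8fb22} and \eqref{eq.xjn6jyy} follow the same template. For \eqref{eq.jt8fb22} I would apply \eqref{eq.eh9zyvi} to $L_{k+r}F_{k+s}$, split the sum, use \eqref{eq.ffgnfzm} with $s\mapsto r+s$ on the first piece and \eqref{eq.wj9bwlz} on the second. For \eqref{eq.xjn6jyy} I would apply \eqref{eq.z9rhta5}; note that here the sign in front of the alternating part is $+$ rather than $-$, which accounts for the final plus sign of the cosine term. In both cases the structure of the computation is identical to the one for \eqref{eq.jufkvyw}.

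There is no real obstacle; the calculations are straightforward once the product-to-sum identities of Lemma~\ref{lem.lk77qra} are invoked. The only point requiring a modicum of care is bookkeeping of the signs $(-1)^{s}$ versus $(-1)^{r+s}$ (the first coming from pulling $(-1)^{k+s}$ out of the $k$-sum, the second from the reindexing $s\mapsto r+s$ inside the linear identities \eqref{eq.ffgnfzm}, \eqref{eq.j0dzhsw}), and the observation that $2(\sqrt{2})^{n}=2^{n/2+1}$ in order to match the stated form of the answers.
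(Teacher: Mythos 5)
Your argument is correct and is essentially identical to the paper's own proof: both apply the product-to-sum identities \eqref{eq.f5dydlo}, \eqref{eq.eh9zyvi}, \eqref{eq.z9rhta5} termwise, then evaluate the resulting linear sums via \eqref{eq.j0dzhsw} (resp.\ \eqref{eq.ffgnfzm}) with $s\mapsto r+s$ and the alternating binomial sum \eqref{eq.wj9bwlz}. The sign bookkeeping and the factor $2(\sqrt 2)^n=2^{n/2+1}$ are handled correctly.
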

\begin{proof} From~\eqref{eq.f5dydlo}, we get
	\[
	5\sum_{k = 0}^{\left\lfloor {n/2} \right\rfloor } {\binom n{2k}F_{k + r} F_{k + s} }= \sum_{k = 0}^{\left\lfloor {n/2} \right\rfloor } {\binom n{2k}L_{2k + r + s} }  - ( - 1)^s L_{r - s} \sum_{k = 0}^{\left\lfloor {n/2} \right\rfloor } {( - 1)^k \binom n{2k}}
	\]
	and hence~\eqref{eq.jufkvyw}, upon use of~\eqref{eq.j0dzhsw} and \eqref{eq.wj9bwlz}. The proof of \eqref{eq.jt8fb22},  \eqref{eq.xjn6jyy} is similar.
\end{proof}
\begin{theorem} If $n$ is a positive integer and $r$ and $s$ are any integers, then
	\begin{equation}\label{eq.ul7brs8}
	10\sum_{k = 0}^n {\binom{2n}{2k}4^{k} F_{k + r} F_{k + s} } = L_{6n + r + s}  + 5^n L_{r + s}  - ( - 1)^s 5^n 2L_{r - s} \cos \big(2n\arctan 2\big),
	\end{equation}
	\begin{equation}\label{eq.yfrpnwe}
	\begin{split}
	10\sum_{k = 0}^{n - 1} \binom{2n - 1}{2k}&4^{k} F_{k + r} F_{k + s}  \\
	&\quad= L_{6n + r + s - 3}  - 5^n F_{r + s}  - ( - 1)^s 5^{n - \frac12} 2L_{r - s} \cos \big((2n - 1)\arctan 2\big),
	\end{split}
	\end{equation}
	\begin{equation}
	2\sum_{k = 0}^n {\binom{2n}{2k}4^{k} L_{k + r} F_{k + s} } = F_{6n + r + s}  + 5^n F_{r + s}  - ( - 1)^s 5^n 2F_{r - s} \cos \big(2n\arctan 2\big),\nonumber
		\end{equation}
	\begin{equation}
	\begin{split}
	2\sum_{k = 0}^{n - 1} \binom{2n - 1}{2k+1}&4^{k} L_{k + r} F_{k + s} \\
	&\quad= F_{6n + r + s - 3}  - 5^{n - 1} L_{r + s}  - ( - 1)^s 5^{n - \frac12} 2F_{r - s} \cos \big((2n - 1)\arctan 2\big),\nonumber
	\end{split}
	\end{equation}
	\begin{equation}
	2\sum_{k = 0}^n {\binom{2n}{2k}4^{k} L_{k + r} L_{k + s} } = L_{6n + r + s}  + 5^n L_{r + s}  + ( - 1)^s 5^n 2L_{r - s} \cos\,(2n\arctan 2),\nonumber
		\end{equation}
	\begin{equation}
	\begin{split}
	2\sum_{k = 0}^{n - 1} \binom{2n - 1}{2k}&4^{k} L_{k + r} L_{k + s}  \\
	&\qquad= L_{6n + r + s - 3}  - 5^n F_{r + s}  + ( - 1)^s 5^{n - \frac12} 2L_{r - s} \cos\big ((2n - 1)\arctan 2\big).\nonumber
	\end{split}
	\end{equation}
\end{theorem}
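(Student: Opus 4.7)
The plan is to reduce each of the six identities to a combination of two simpler binomial sums already evaluated earlier in the paper, by means of the product-to-single conversions of Lemma~\ref{lem.lk77qra}. For the Fibonacci-Fibonacci case \eqref{eq.ul7brs8}, substituting \eqref{eq.f5dydlo} and observing that $(-1)^{k+s} 4^k = (-1)^s (-4)^k$ gives
\[
10\sum_{k=0}^n \binom{2n}{2k} 4^k F_{k+r}F_{k+s} = 2\sum_{k=0}^n \binom{2n}{2k} 4^k L_{2k+(r+s)} - 2(-1)^s L_{r-s}\sum_{k=0}^n \binom{2n}{2k}(-4)^k.
\]
The first sum is evaluated by the even branch of the Lucas companion to \eqref{eq.bhdgtf0} in Theorem~\ref{thm.gybpu6k} with $n \to 2n$ and $s \to r+s$, giving $L_{6n+r+s} + 5^n L_{r+s}$. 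The second sum is evaluated by \eqref{eq.rkbacph} with $n \to 2n$, giving $5^n \cos(2n\arctan 2)$. Combining yields the stated right-hand side.

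The remaining five identities follow the identical template: for the $L_{k+r}F_{k+s}$ sums invoke \eqref{eq.eh9zyvi} together with the Fibonacci form of Theorem~\ref{thm.gybpu6k}; for the $L_{k+r}L_{k+s}$ sums invoke \eqref{eq.z9rhta5} together with its Lucas form. The sign of the $\cos$ correction on the right-hand side mirrors the sign of the $(-1)^{k+s}$ correction in Lemma~\ref{lem.lk77qra}: minus in \eqref{eq.f5dydlo} and \eqref{eq.eh9zyvi} for the first four identities, plus in \eqref{eq.z9rhta5} for the last two. For each $(2n-1)$-version one selects the odd branch of Theorem~\ref{thm.gybpu6k}, producing the shifted subscript $3(2n-1)+(r+s) = 6n+r+s-3$ in the main term together with the $F \leftrightarrow L$ swap in the non-cosine correction (so e.g.\ $L_{r+s}$ becomes $F_{r+s}$), while \eqref{eq.rkbacph} with $n \to 2n-1$ supplies the factor $5^{n-\frac12}\cos\bigl((2n-1)\arctan 2\bigr)$.

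The main obstacle is purely bookkeeping: one must consistently pair the parity of $N \in \{2n,\,2n-1\}$ with the correct branch of Theorem~\ref{thm.gybpu6k}, and track how that parity flip swaps Fibonacci and Lucas numbers in the non-cosine term. No new structural identity is required beyond Lemma~\ref{lem.lk77qra} and the cosine sum evaluation \eqref{eq.rkbacph}; everything else is algebraic simplification that the author can reasonably omit.
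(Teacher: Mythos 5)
Your proposal is correct and is essentially identical to the paper's own proof: the authors likewise split the product via \eqref{eq.f5dydlo} (and \eqref{eq.eh9zyvi}, \eqref{eq.z9rhta5} for the other cases), evaluate the resulting $L_{2k+r+s}$ sum with Theorem~\ref{thm.gybpu6k}, and evaluate the alternating sum with \eqref{eq.rkbacph}. Your bookkeeping of the parity branches and the $F\leftrightarrow L$ swap in the odd case checks out.
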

\begin{proof}
	Using~\eqref{eq.f5dydlo} we have
	\[
	10\sum_{k = 0}^{\left\lfloor {n/2} \right\rfloor } {\binom n{2k}4^{k} F_{k + r} F_{k + s} }= 2\sum_{k = 0}^{\left\lfloor {n/2} \right\rfloor } {\binom n{2k}4^{k} L_{2k + r + s} }  - ( - 1)^s 2L_{r - s} \sum_{k = 0}^{\left\lfloor {n/2} \right\rfloor } {( - 4)^k \binom n{2k}},
	\]
	from which \eqref{eq.ul7brs8} and \eqref{eq.yfrpnwe} now follow on account of \eqref{eq.bhdgtf0} and \eqref{eq.rkbacph}.
	
	The remaining identities can be similarly proved, using \eqref{eq.eh9zyvi}, \eqref{eq.z9rhta5} and the identities stated in Theorem~\ref{thm.gybpu6k}.
\end{proof}
\begin{theorem} If $n$ is a positive integer and $r$, $s$ and $j$ are any integers, then
	\begin{align}\label{eq.o7fv5yt}
	10\sum_{k = 0}^n {\binom{2n}{2k}F_{j(2k + r)} F_{j(2k + s)} } &= \left( {L_j^{2n}  + 5^n F_j^{2n} } \right)L_{j(2n + r + s)}  - ( - 1)^{js} 4^{n} L_{j(r - s)},\nonumber\\
	2\sum_{k = 0}^n {\binom{2n}{2k}L_{j(2k + r)} F_{j(2k + s)} }&= (L_j^{2n}  + 5^n F_j^{2n}) F_{j(2n + r + s)}  - ( - 1)^{js} 4^{n} F_{j(r - s)},\\
	2\sum_{k = 0}^n {\binom{2n}{2k}L_{j(2k + r)} L_{j(2k + s)} } &= (L_j^{2n}  + 5^n F_j^{2n}) L_{j(2n + r + s)}  + ( - 1)^{js} 4^{n} L_{j(r - s)},\nonumber
	\end{align}
	\vspace{-16pt}
	\begin{align*}
	10\sum_{k = 0}^{n - 1} &{\binom{2n - 1}{2k} F_{j(2k + r)} F_{j(2k + s)} } \\
	&= ( - 1)^j L_j^{2n - 1} L_{j(2n + r + s - 1)} - ( - 1)^j 5^n F_j^{2n - 1} F_{j(2n + r + s - 1)}- ( - 1)^{js} 2^{2n - 1} L_{j(r - s)}, \\
	2\sum_{k = 0}^{n - 1} &{\binom{2n - 1}{2k}L_{j(2k + r)} F_{j(2k + s)} }\\
	&= ( - 1)^j L_j^{2n - 1} F_{j(2n + r + s - 1)} - ( - 1)^j 5^{n - 1} F_j^{2n - 1} L_{j(2n + r + s - 1)} - ( - 1)^{js} 2^{2n - 1} F_{j(r - s)}, 
	\end{align*}
	\begin{equation*}
	\begin{split}
	2\sum_{k = 0}^{n - 1} &\binom{2n - 1}{2k}L_{j(2k + r)}  L_{j(2k + s)}\\
	  &= ( - 1)^j L_j^{2n - 1} L_{j(2n + r + s - 1)} - ( - 1)^j 5^n F_j^{2n - 1} F_{j(2n + r + s - 1)} + ( - 1)^{js} 2^{2n - 1} L_{j(r - s)}.
	\end{split}
	\end{equation*}
\end{theorem}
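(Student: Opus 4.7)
The plan is to linearise each product via Lemma \ref{lem.lk77qra}, reducing each sum on the left-hand side to a combination of two already-evaluated sums. First, I would rewrite identities \eqref{eq.f5dydlo}--\eqref{eq.z9rhta5} in the symmetric form
$$
5F_u F_v = L_{u+v} - (-1)^v L_{u-v},\qquad L_u F_v = F_{u+v} - (-1)^v F_{u-v},\qquad L_u L_v = L_{u+v} + (-1)^v L_{u-v}.
$$
Then, taking $u = j(2k+r)$ and $v = j(2k+s)$, one has $u+v = j(4k+r+s)$, $u-v = j(r-s)$, and $(-1)^v = (-1)^{js}$ since $(-1)^{2jk}=1$, yielding for example
$$
5F_{j(2k+r)}F_{j(2k+s)} = L_{j(4k+r+s)} - (-1)^{js}L_{j(r-s)},
$$
with analogous expressions for the $LF$ and $LL$ products.

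Next, I would multiply these by $\binom{2n}{2k}$ and sum over $0\le k\le n$. The sum splits into two pieces: a sum $\sum_k \binom{2n}{2k} L_{j(4k+r+s)}$, which is exactly the quantity evaluated by \eqref{eq.hrs065m} with $s$ replaced by $r+s$, plus a constant multiple of $\sum_{k=0}^n \binom{2n}{2k} = 2^{2n-1}$ given by \eqref{eq.hqs0jwi} (applied with $n$ replaced by $2n$). After multiplying through by $2$ to absorb the leading factor of $\tfrac12$ implicit in \eqref{eq.hrs065m} and using $2\cdot 2^{2n-1}=4^n$, the first even-index identity emerges verbatim. The $LF$ and $LL$ even-index identities follow by the same recipe, invoking \eqref{eq.yqfik8x} and \eqref{eq.hrs065m} respectively.

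For the three odd-index identities the plan is identical, with \eqref{eq.dv1m3af} and \eqref{eq.blds4xe} playing the roles of \eqref{eq.yqfik8x} and \eqref{eq.hrs065m}, and with $\sum_{k=0}^{n-1}\binom{2n-1}{2k} = 2^{2n-2}$ replacing \eqref{eq.hqs0jwi}; this value follows from \eqref{eq.k1iddb5} after replacing $n$ by $2n$ and noting $\binom{2n-1}{2n}=0$. The factor $(-1)^j$ appearing in \eqref{eq.dv1m3af} and \eqref{eq.blds4xe} is carried through unchanged, and the computation $2\cdot 2^{2n-2}=2^{2n-1}$ produces the stated coefficient of $L_{j(r-s)}$ or $F_{j(r-s)}$. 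No real obstacle arises: the only care required is tracking the sign $(-1)^{js}$ consistently and remembering that the $\sqrt 5$ implicit in \eqref{eq.f5dydlo} is responsible for the factor $10$ on the left of the two $FF$ identities, whereas the $LF$ and $LL$ identities begin with a factor of $2$.
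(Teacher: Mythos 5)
Your proposal is correct and follows essentially the same route as the paper: linearise each product via Lemma \ref{lem.lk77qra} (the paper substitutes $2kj$, $rj$, $sj$ into \eqref{eq.f5dydlo}--\eqref{eq.z9rhta5}, which is exactly your symmetric form with $u=j(2k+r)$, $v=j(2k+s)$), then evaluate the resulting sums with Theorem \ref{thm.rz6m834} and \eqref{eq.hqs0jwi}. All the bookkeeping you describe — the sign $(-1)^{js}$, the factors $4^n$ and $2^{2n-1}$, and the choice of \eqref{eq.yqfik8x}, \eqref{eq.hrs065m}, \eqref{eq.dv1m3af}, \eqref{eq.blds4xe} for the respective cases — matches the paper's computation.
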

\begin{proof}
	We prove~\eqref{eq.o7fv5yt}. The proof of each of the remaining identities is similar and requires the identities given in Theorem~\ref{thm.rz6m834}. In~\eqref{eq.f5dydlo} write $2kj$ for $k$, $rj$ for $r$ and $sj$ for $s$ to obtain
	$$5F_{j(2k + r)} F_{j(2k + s)}  = L_{j(4k + r + s)}  - ( - 1)^{js} L_{j(r - s)} .$$
	Thus,
	\[
	10\sum_{k = 0}^n {\binom{2n}{2k}F_{j(2k + r)} F_{j(2k + s)} }	= 2\sum_{k = 0}^n {\binom{2n}{2k}L_{j(4k + r + s)} }  - ( - 1)^{js} 2L_{j(r - s)}\, \sum_{k = 0}^n \binom{2n}{2k},
	\]
	from which \eqref{eq.o7fv5yt} follows after using~\eqref{eq.hrs065m} and \eqref{eq.hqs0jwi}.
\end{proof}
\begin{theorem}
	If $n$ is a positive integer and $r$ and $s$ are any integers, then
	\begin{align}\label{eq.x9c3jzd}
		5\sum_{k = 0}^{\left\lfloor {n/2} \right\rfloor } {\binom n{2k}F_{3k + r} F_{3k + s} } & = 2^{n - 1} \big( {L_{2n + r + s}  + ( - 1)^n L_{n + r + s} } \big) - ( - 1)^s \sqrt{2^n} \cos \Big(\frac{n\pi}{4}\Big) L_{r - s},\\
	\sum_{k = 0}^{\left\lfloor {n/2} \right\rfloor } {\binom n{2k}L_{3k + r} F_{3k + s} }  & = 2^{n - 1} \bigl( {F_{2n + r + s}  + ( - 1)^n F_{n + r + s} } \bigr) - ( - 1)^s \sqrt{2^n} \cos\Big(\frac{n\pi}{4}\Big)F_{r - s},\nonumber\\
	\sum_{k = 0}^{\left\lfloor {n/2} \right\rfloor } {\binom n{2k}L_{3k + r} L_{3k + s} }  &= 2^{n - 1} \bigl( {L_{2n + r + s}  + ( - 1)^n L_{n + r + s} } \bigr) + ( - 1)^s  \sqrt{2^n} \cos\Big(\frac{n\pi}{4}\Big)L_{r - s}.\nonumber
		\end{align}
\end{theorem}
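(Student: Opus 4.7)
The plan is to mirror the method already used in the preceding ``quadratic'' theorems: reduce each product $F_{3k+r}F_{3k+s}$, $L_{3k+r}F_{3k+s}$, or $L_{3k+r}L_{3k+s}$ to a linear combination involving $L_{6k+r+s}$ or $F_{6k+r+s}$ plus a constant times $(-1)^k$, and then invoke the already-established single-sum identities.

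Concretely, I would proceed as follows. First, in Lemma~\ref{lem.lk77qra} replace $k$ by $3k$, $r$ by $r$, and $s$ by $s$. Since $(-1)^{3k+s}=(-1)^{k+s}$, this yields
\begin{align*}
5F_{3k+r}F_{3k+s} &= L_{6k+r+s}-(-1)^{k+s}L_{r-s},\\
L_{3k+r}F_{3k+s} &= F_{6k+r+s}-(-1)^{k+s}F_{r-s},\\
L_{3k+r}L_{3k+s} &= L_{6k+r+s}+(-1)^{k+s}L_{r-s}.
\end{align*}
Next, for each identity I multiply by $\binom{n}{2k}$ and sum from $k=0$ to $\lfloor n/2\rfloor$. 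The ``diagonal'' piece yields $\sum\binom{n}{2k}L_{6k+r+s}$ or $\sum\binom{n}{2k}F_{6k+r+s}$, which by identities~\eqref{F_id_new1} and~\eqref{L_id_new1} (with the parameter $s$ there replaced by $r+s$) produce exactly
\[
2^{n-1}\bigl(L_{2n+r+s}+(-1)^n L_{n+r+s}\bigr) \quad \text{or} \quad 2^{n-1}\bigl(F_{2n+r+s}+(-1)^n F_{n+r+s}\bigr),
\]
respectively.

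The ``correction'' piece has the form $(-1)^s L_{r-s}\sum_{k=0}^{\lfloor n/2\rfloor}(-1)^k\binom{n}{2k}$ (or the analogous expression with $F_{r-s}$), and by the alternating binomial sum~\eqref{eq.wj9bwlz} this equals
\[
(-1)^s L_{r-s}\,(\sqrt 2)^n\cos\!\bigl(\tfrac{n\pi}{4}\bigr)=(-1)^s \sqrt{2^n}\,\cos\!\bigl(\tfrac{n\pi}{4}\bigr)L_{r-s},
\]
with the analogous Fibonacci version in the second identity. Assembling the diagonal and correction pieces, with signs $-,-,+$ as dictated by Lemma~\ref{lem.lk77qra}, gives the three stated identities term by term.

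There is no real obstacle here beyond careful bookkeeping; the only subtlety is the identity $(-1)^{3k+s}=(-1)^{k+s}$, which lets the $(-1)^k\binom{n}{2k}$ sum reduce to the cleanly evaluable form in~\eqref{eq.wj9bwlz}. The structure is exactly parallel to the proof of~\eqref{eq.jufkvyw}--\eqref{eq.xjn6jyy}, with the single-sum identities~\eqref{eq.ffgnfzm}, \eqref{eq.j0dzhsw} there replaced by~\eqref{F_id_new1}, \eqref{L_id_new1}. Hence all calculations are routine and can safely be omitted, as is done in the surrounding theorems.
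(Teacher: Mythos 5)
Your proposal is correct and follows essentially the same route as the paper: it applies Lemma~\ref{lem.lk77qra} with $k$ replaced by $3k$ (noting $(-1)^{3k+s}=(-1)^{k+s}$), then evaluates the resulting sums via \eqref{F_id_new1}, \eqref{L_id_new1} and the alternating binomial sum \eqref{eq.wj9bwlz}. The bookkeeping of signs and the substitution $s\mapsto r+s$ in the single-sum identities are all as in the paper's argument.
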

\begin{proof} We prove only \eqref{eq.x9c3jzd}. Using~\eqref{eq.f5dydlo}, write
	\begin{equation}\label{eq.k6ln87h}
	5F_{3k + r} F_{3k + s}  = L_{6k + r + s}  - ( - 1)^{k + s} L_{r - s} .
	\end{equation}
	Thus,
	\[
	5\sum_{k = 0}^{\left\lfloor {n/2} \right\rfloor } {\binom n{2k}F_{3k + r} F_{3k + s} }  = \sum_{k = 0}^{\left\lfloor {n/2} \right\rfloor } {\binom n{2k}L_{6k + r + s} }  - L_{r - s} \sum_{k = 0}^{\left\lfloor {n/2} \right\rfloor } {( - 1)^{k+s} \binom n{2k}} 
	\]
	and hence~\eqref{eq.x9c3jzd}, using \eqref{L_id_new1} and \eqref{eq.wj9bwlz}.
\end{proof}

Using \eqref{eq.rnor47h}, we have the following.
\begin{corollary}
	If $n$ is a positive odd integer and $r$ and $s$ are any integers, then
	\[
	\sum_{k = 0}^n {\binom{2n}{2k}F_{3k + r} F_{3k + s} }  = 2^{2n - 1}F_n F_{3n + r + s}.
	\]
\end{corollary}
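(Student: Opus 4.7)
The plan is to mirror the proof of the preceding Theorem, but perform the replacement $n \mapsto 2n$ throughout so that (eq.rnor47h) becomes available. The starting point is the intermediate identity (eq.k6ln87h), namely
\[
5F_{3k+r}F_{3k+s} = L_{6k+r+s} - (-1)^{k+s}L_{r-s},
\]
which is valid for all integers $k$, $r$, $s$. I would multiply both sides by $\binom{2n}{2k}$ and sum over $k$ from $0$ to $n$, yielding
\[
5\sum_{k=0}^{n} \binom{2n}{2k} F_{3k+r}F_{3k+s} = \sum_{k=0}^{n}\binom{2n}{2k} L_{6k+r+s} - (-1)^s L_{r-s} \sum_{k=0}^{n} (-1)^k \binom{2n}{2k}.
\]

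For the first sum on the right I would invoke (eq.rnor47h) with $s$ replaced by $r+s$; since $n$ is odd, the relevant case gives $2^{2n-1}\cdot 5\,F_n F_{3n+r+s}$. For the second sum I would apply (eq.wj9bwlz) with $n$ replaced by $2n$, obtaining
\[
\sum_{k=0}^{n}(-1)^k \binom{2n}{2k} = (\sqrt{2})^{2n}\cos\!\Big(\frac{2n\pi}{4}\Big) = 2^n \cos\!\Big(\frac{n\pi}{2}\Big).
\]
The decisive point is that for odd $n$ we have $\cos(n\pi/2)=0$, so the entire $L_{r-s}$ contribution vanishes.

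Combining these two evaluations, the right-hand side reduces to $2^{2n-1}\cdot 5\,F_n F_{3n+r+s}$, and dividing through by $5$ delivers the stated identity. There is essentially no obstacle: the only conceptual step is recognizing that the parity hypothesis on $n$ is exactly what kills the alternating binomial sum, which is what allows the quadratic identity to collapse into the clean product form $2^{2n-1}F_n F_{3n+r+s}$.
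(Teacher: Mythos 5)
Your proposal is correct and is essentially the paper's intended argument: the paper's one-line justification ``Using \eqref{eq.rnor47h}, we have the following'' amounts to exactly your computation, namely summing \eqref{eq.k6ln87h} against $\binom{2n}{2k}$, evaluating the Lucas sum via \eqref{eq.rnor47h} (odd case), and observing that the alternating binomial sum $2^{n}\cos(n\pi/2)$ from \eqref{eq.wj9bwlz} vanishes for odd $n$. All the individual evaluations check out, so nothing is missing.
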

\begin{theorem}
	If $n$ is a non-negative integer and $r$ and $s$ are any integers, then
	\begin{equation}\label{eq.lopcp6m}
	\begin{split}
	\sum_{k = 0}^{\left\lfloor {n/2} \right\rfloor } {\binom n{2k}5^{k-1} F_{3k + r} F_{3k + s} }  &= 2^{n-1}\big(2^{n} L_{2n + r + s}  + ( - 1)^n L_{3n + r + s}\big) \\
	&\quad - ( - 1)^s  \sqrt {6^n} \cos \big(n\arctan\sqrt 5\big)L_{r - s},
	\end{split}
	\end{equation}
	\begin{equation*}
	\begin{split}
	\sum_{k = 0}^{\left\lfloor {n/2} \right\rfloor } {\binom n{2k}5^k L_{3k + r} F_{3k + s} }  &= 2^{n - 1}\big(2^n F_{2n + r + s}  + ( - 1)^n F_{3n + r + s}\big) \\
	&\quad - ( - 1)^s \sqrt {6^n} \cos \big(n\arctan\sqrt 5\big) F_{r - s},
	\end{split}
	\end{equation*}
	\begin{equation*}
	\begin{split}
	\sum_{k = 0}^{\left\lfloor {n/2} \right\rfloor } {\binom n{2k}5^k L_{3k + r} L_{3k + s} }  &= 2^{n - 1}\big(2^n L_{2n + r + s}  + ( - 1)^n L_{3n + r + s}\big) \\
	&\quad + ( - 1)^s \sqrt{6^n} \cos \big(n\arctan\sqrt 5\big) L_{r - s}.
	\end{split}
	\end{equation*}
\end{theorem}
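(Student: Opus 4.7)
The plan is to mirror the proof of the immediately preceding theorem, with the weighted identity~\eqref{eq.a02pcu8} playing the role that \eqref{L_id_new1} did there, and identity~\eqref{eq.cykxsdk} replacing~\eqref{eq.wj9bwlz}. Concretely, I would start by specializing the product-to-sum formulas \eqref{eq.f5dydlo}, \eqref{eq.eh9zyvi} and \eqref{eq.z9rhta5} of Lemma~\ref{lem.lk77qra} at $k\mapsto 3k$; since $(-1)^{3k}=(-1)^k$, these become
\begin{align*}
5F_{3k+r}F_{3k+s}&=L_{6k+r+s}-(-1)^{k+s}L_{r-s},\\
L_{3k+r}F_{3k+s}&=F_{6k+r+s}-(-1)^{k+s}F_{r-s},\\
L_{3k+r}L_{3k+s}&=L_{6k+r+s}+(-1)^{k+s}L_{r-s}.
\end{align*}

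Next I would multiply each identity by $\binom{n}{2k}5^k$ and sum for $k=0,\ldots,\lfloor n/2\rfloor$. Every resulting left-hand side then splits cleanly into two pieces. The first piece, of the form $\sum\binom{n}{2k}5^{k}L_{6k+r+s}$ or $\sum\binom{n}{2k}5^{k}F_{6k+r+s}$, is exactly what identity~\eqref{eq.a02pcu8} (and its Fibonacci companion proved in the same theorem) closes into $2^{n-1}\bigl(2^{n}L_{2n+r+s}+(-1)^{n}L_{3n+r+s}\bigr)$, respectively its $F$-version. The second piece is a constant $L_{r-s}$ or $F_{r-s}$ multiplied by $(-1)^{s}\sum\binom{n}{2k}(-5)^{k}$, which by~\eqref{eq.cykxsdk} evaluates to $(-1)^{s}\sqrt{6^{n}}\cos\bigl(n\arctan\sqrt{5}\bigr)$. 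Reassembling the two pieces yields each of the three claimed identities.

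The main obstacle I expect is purely clerical: one has to track the factor of $5$ produced on the left of \eqref{eq.f5dydlo} against the weight $5^{k}$ inside the sum (and, for the second and third identities, the weight $5^{k}$ against the unit coefficient in \eqref{eq.eh9zyvi} and \eqref{eq.z9rhta5}), and verify that the signs $(-1)^{s}$, $(-1)^{n}$ and $(-1)^{k+s}$ combine consistently so that each constant term emerges with exactly the sign stated. No induction, no new Binet-type manipulation, and no auxiliary result beyond those already catalogued in the paper appear necessary.
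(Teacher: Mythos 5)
Your proposal is correct and follows exactly the paper's own route: specialize Lemma~\ref{lem.lk77qra} at $k\mapsto 3k$, weight by $\binom{n}{2k}5^k$, and close the two resulting pieces with \eqref{eq.a02pcu8} (and its Fibonacci companion) and \eqref{eq.cykxsdk}. The ``clerical'' factor of $5$ you flag is real --- carrying it through shows the left side of the first identity should carry the weight $5^{k+1}$ rather than the $5^{k-1}$ printed in the statement --- but that is a typo in the theorem, not a gap in your argument.
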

\begin{proof} We prove \eqref{eq.lopcp6m}. Using~\eqref{eq.k6ln87h} we have
	\[
	\sum_{k = 0}^{\left\lfloor {n}/{2} \right\rfloor } \binom n{2k}5^{k-1} F_{3k + r} F_{3k + s} = \sum_{k = 0}^{\left\lfloor {n}/{2} \right\rfloor }{\binom n{2k}5^k L_{6k + r + s} }  - ( - 1)^s L_{r - s} \sum_{k = 0}^{\left\lfloor {n}/{2} \right\rfloor } {\binom n{2k} ( - 5)^k }
	\]
	and hence~\eqref{eq.lopcp6m} using~\eqref{eq.a02pcu8} and~\eqref{eq.cykxsdk}.
\end{proof}

\section{Cubic binomial summation  identities}

In this section we derive some binomial identities involving the product of three Fibonacci and/or Lucas numbers. The identities stated in the next lemma are needed for this purpose.
\begin{lemma}
	For any integers $k$, $r$, $s$ and $t$,
	\begin{align}
	5F_{k + r} F_{k + s} F_{k + t}  &= F_{3k + r + s + t}  - ( - 1)^{k + r} F_{k + s + t - r}  - ( - 1)^{k + t} L_{s - t} F_{k + r},\label{eq.urjg5k6}\\
	5L_{k + r} F_{k + s} F_{k + t} &= L_{3k + r + s + t}  + ( - 1)^{k + r} L_{k + s + t - r}  - ( - 1)^{k + t} L_{s - t} L_{k + r}, \nonumber\\
	L_{k + r} L_{k + s} F_{k + t} & = F_{3k + r + s + t}  + ( - 1)^{k + r} F_{k + s + t - r}  - ( - 1)^{k + t} F_{s - t} L_{k + r}, \nonumber\\
	L_{k + r} L_{k + s} L_{k + t} & = L_{3k + r + s + t}  + ( - 1)^{k + r} L_{k + s + t - r}  + ( - 1)^{k + t} L_{s - t} L_{k + r}\label{eq.hwl0m82}. 
	\end{align}
\end{lemma}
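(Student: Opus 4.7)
The plan is to derive each of the four cubic identities by applying the quadratic relations of Lemma~\ref{lem.lk77qra} twice in succession: first to pair off two of the three factors (thereby linearising that quadratic subproduct) and then to linearise the remaining product of one Fibonacci/Lucas with the Lucas/Fibonacci just produced. In every case the pair that I choose to collapse first is $F_{k+s}F_{k+t}$ or $L_{k+s}L_{k+t}$ (or the mixed pair $L_{k+s}F_{k+t}$), because the accompanying sign $(-1)^{k+t}L_{s-t}$ or $(-1)^{k+t}F_{s-t}$ arising from \eqref{eq.f5dydlo}, \eqref{eq.eh9zyvi}, or \eqref{eq.z9rhta5} is precisely the one that survives in the last term of each claimed formula, with the unpaired factor $F_{k+r}$ or $L_{k+r}$ riding along.

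Concretely, for identity~\eqref{eq.urjg5k6}, I would use \eqref{eq.f5dydlo} to write $5F_{k+s}F_{k+t}=L_{2k+s+t}-(-1)^{k+t}L_{s-t}$ and then multiply through by $F_{k+r}$. The only nontrivial piece that remains is the mixed product $F_{k+r}L_{2k+s+t}$, which I would handle by a second application of \eqref{eq.eh9zyvi} with the substitutions $k\mapsto k$, $r\mapsto k+s+t$, $s\mapsto r$, producing $F_{3k+r+s+t}-(-1)^{k+r}F_{k+s+t-r}$. The three remaining identities follow the same two-step recipe: for $5L_{k+r}F_{k+s}F_{k+t}$ the second linearisation uses \eqref{eq.z9rhta5}; for $L_{k+r}L_{k+s}F_{k+t}$ one first collapses $L_{k+s}F_{k+t}$ via \eqref{eq.eh9zyvi} and then $L_{k+r}F_{2k+s+t}$ again via \eqref{eq.eh9zyvi}; and for the purely Lucas identity both steps invoke \eqref{eq.z9rhta5}.

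The only real obstacle is sign bookkeeping. In the third identity the second linearisation step naturally yields an index of the form $F_{r-(k+s+t)}$, which must be reflected through the negative-index rule $F_{-m}=(-1)^{m-1}F_m$, and the resulting parity $(-1)^{s+t}\cdot(-1)^{k+s+t-r}$ must be verified to collapse to $-(-1)^{k+r}$ so that the displayed sign comes out correctly; an analogous check is needed for the Lucas rule $L_{-m}=(-1)^m L_m$ in the Lucas--Lucas and mixed cases. No new identities beyond Lemma~\ref{lem.lk77qra} (together with the negative-index conventions recalled in Section~1) are required, so the proof is entirely a matter of routine substitution once the pairing order above is fixed.
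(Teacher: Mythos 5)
Your proposal is correct and matches the paper's own (very terse) proof, which simply states that these cubic identities ``can be derived from the identities stated in Lemma~\ref{lem.lk77qra}'' --- i.e., exactly the two-step linearisation you describe: collapse $F_{k+s}F_{k+t}$ (resp.\ $L_{k+s}F_{k+t}$, $L_{k+s}L_{k+t}$) first, then linearise the remaining product against the $(k+r)$-indexed factor. The sign bookkeeping you flag does close up, e.g.\ $-(-1)^{s+t}F_{r-k-s-t}=-(-1)^{s+t}(-1)^{k+s+t-r-1}F_{k+s+t-r}=+(-1)^{k+r}F_{k+s+t-r}$, which is the sign appearing in the third identity.
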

\begin{proof} These can be derived from the identities stated in Lemma~\ref{lem.lk77qra}. Identities~\eqref{eq.urjg5k6} and~\eqref{eq.hwl0m82} are also given in~\cite{kronenburg2}.
\end{proof}
\begin{theorem} If $n$ is a non-negative integer and $r$, $s$ and $t$ are any integers, then
	\begin{equation}\label{eq.sp3km0n}
	\begin{split}
	10&\sum_{k = 0}^{\left\lfloor {n}/{2} \right\rfloor } \binom n{2k}F_{2k + r} F_{2k + s} F_{2k + t}  = 2^n \big( {F_{2n + s + r + t}  + ( - 1)^n F_{n + s + r + t} } \bigr)\\
	&\quad\quad - ( - 1)^r \bigl( {F_{2n + s + t - r}  - ( - 1)^{s + t - r} F_{n - s - t + r} } \bigr) - ( - 1)^t L_{s - t} \bigl( {F_{2n + r}  - ( - 1)^r F_{n - r} } \bigr),
	\end{split}
	\end{equation}
	\begin{equation*}
	\begin{split}
	10&\sum_{k = 0}^{\left\lfloor {n}/{2} \right\rfloor } \binom n{2k}L_{2k + r} F_{2k + s} F_{2k + t}  = 2^n \bigl( {L_{2n + s + r + t}  + ( - 1)^n L_{n + s + r + t} } \bigr)\\
	&\quad\quad + ( - 1)^r \bigl( {L_{2n + s + t - r}  + ( - 1)^{s + t - r} L_{n - s - t + r} } \bigr) - ( - 1)^t L_{s - t} \bigl( {L_{2n + r}  + ( - 1)^r L_{n - r} } \bigr),
	\end{split}
	\end{equation*}
	\begin{equation*}
	\begin{split}
	2&\sum_{k = 0}^{\left\lfloor {n}/{2} \right\rfloor } \binom n{2k}L_{2k + r} L_{2k + s} F_{2k + t}  = 2^n \bigl( {F_{2n + s + r + t}  + ( - 1)^n F_{n + s + r + t} } \bigr)\\
	&\quad\quad + ( - 1)^r \bigl( {F_{2n + s + t - r}  - ( - 1)^{s + t - r} F_{n - s - t + r} } \bigr) - ( - 1)^t F_{s - t} \bigl( {L_{2n + r}  + ( - 1)^r L_{n - r} } \bigr),
	\end{split}
	\end{equation*}
	\begin{equation*}
	\begin{split}
	2&\sum_{k = 0}^{\left\lfloor {n}/{2} \right\rfloor } \binom n{2k}L_{2k + r} L_{2k + s} L_{2k + t}   = 2^n \bigl( {L_{2n + s + r + t}  + ( - 1)^n L_{n + s + r + t} } \bigr)\\
	&\quad\quad + ( - 1)^r \bigl( {L_{2n + s + t - r}  + ( - 1)^{s + t - r} L_{n - s - t + r} } \bigr)+ ( - 1)^t L_{s - t} \bigl( {L_{2n + r}  + ( - 1)^r L_{n - r} } \bigr).
	\end{split}
	\end{equation*}
\end{theorem}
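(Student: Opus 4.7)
The plan is to reduce each cubic product identity to a linear binomial sum that has already been established. I would only prove the first identity \eqref{eq.sp3km0n} in detail; the other three will follow the same template with different input identities.

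First I would substitute $k \mapsto 2k$ in the cubic product identity \eqref{eq.urjg5k6}, noting that $(-1)^{2k+r}=(-1)^r$ and $(-1)^{2k+t}=(-1)^t$, to obtain
\[
5F_{2k+r}F_{2k+s}F_{2k+t} = F_{6k+r+s+t} - (-1)^r F_{2k+s+t-r} - (-1)^t L_{s-t}\,F_{2k+r}.
\]
Multiplying by $2\binom{n}{2k}$ and summing $k$ from $0$ to $\lfloor n/2\rfloor$ splits the left side into three independent binomial sums:
\[
10\sum_{k=0}^{\lfloor n/2\rfloor}\binom{n}{2k}F_{2k+r}F_{2k+s}F_{2k+t}
= 2\,S_1 - 2(-1)^r\,S_2 - 2(-1)^t L_{s-t}\,S_3,
\]
where $S_1=\sum\binom{n}{2k}F_{6k+r+s+t}$, $S_2=\sum\binom{n}{2k}F_{2k+s+t-r}$ and $S_3=\sum\binom{n}{2k}F_{2k+r}$.

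Next, I would evaluate each piece using the already-proved linear sums. Identity \eqref{F_id_new1} handles $S_1$ directly, giving $2S_1=2^n\big(F_{2n+r+s+t}+(-1)^n F_{n+r+s+t}\big)$, while \eqref{eq.ffgnfzm} evaluates $S_2$ and $S_3$:
\[
2S_2 = F_{2n+s+t-r} - (-1)^{s+t-r}F_{n-s-t+r}, \qquad
2S_3 = F_{2n+r} - (-1)^r F_{n-r}.
\]
Substituting these back and collecting terms recovers \eqref{eq.sp3km0n} exactly. For the remaining three identities the procedure is identical, with the cubic expansion from the preceding lemma (the $LFF$, $LLF$, $LLL$ versions) replacing \eqref{eq.urjg5k6}, and with \eqref{L_id_new1}, \eqref{eq.j0dzhsw} used in place of \eqref{F_id_new1}, \eqref{eq.ffgnfzm} whenever Lucas linear sums appear instead of Fibonacci ones.

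The main obstacle is purely bookkeeping: one has to track the parity factors $(-1)^r$, $(-1)^t$, $(-1)^{s+t-r}$ carefully after the $k\mapsto 2k$ substitution, and match each of the three resulting subsums to the correct linear identity (distinguishing the $6k$-arguments from the $2k$-arguments so that \eqref{F_id_new1}/\eqref{L_id_new1} are applied to the first piece and \eqref{eq.ffgnfzm}/\eqref{eq.j0dzhsw} to the other two). No genuinely new computation is required beyond this careful substitution.
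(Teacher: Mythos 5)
Your proposal is correct and follows essentially the same route as the paper: substitute $k\mapsto 2k$ in \eqref{eq.urjg5k6} (so the parity factors collapse to $(-1)^r$ and $(-1)^t$), sum against $\binom{n}{2k}$, and evaluate the resulting three linear sums via \eqref{F_id_new1} for the $6k$-argument piece and \eqref{eq.ffgnfzm} for the two $2k$-argument pieces, with the Lucas analogues handling the remaining three identities. The bookkeeping you describe matches the paper's computation exactly.
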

\begin{proof} Write $2k$ for $k$ in~\eqref{eq.urjg5k6} and sum to obtain
	\[
	\begin{split}
	5\sum_{k = 0}^{\left\lfloor {n}/{2} \right\rfloor } \binom n{2k}&F_{2k + r} F_{2k + s} F_{2k + t}  = \sum_{k = 0}^{\left\lfloor {n}/{2} \right\rfloor } {\binom n{2k}F_{6k + r + s + t} } \\
	&- ( - 1)^r \sum_{k = 0}^{\left\lfloor {n}/{2} \right\rfloor } {\binom n{2k}F_{2k + s + t - r} }- ( - 1)^t L_{s - t} \sum_{k = 0}^{\left\lfloor {n}/{2} \right\rfloor } {\binom n{2k}F_{2k + r} },
	\end{split}
	\]
	whence~\eqref{eq.sp3km0n} in view of~\eqref{eq.ffgnfzm} and~\eqref{F_id_new1}.
\end{proof}

\section{Quartic binomial summation  identities}

We conclude our study with the derivation of some quartic binomial Fibonacci identities. The identities stated in the next lemma are required.
\begin{lemma}
	If $k$, $p$, $q$, $r$ and $s$ are any integers, then
	\begin{equation}\label{eq.dh8k87a}
	\begin{split}
	25F_{k + p} F_{k + q} F_{k + r} F_{k + s}&= L_{4k + p + q + r + s}  - ( - 1)^{s + k} L_{2k + p + q + r - s}  - ( - 1)^{r + k} L_{2k + p + q - r + s} \\
	&\quad- ( - 1)^{q + k} L_{p - q} L_{2k + r + s}  + ( - 1)^{r + s} L_{p + q - r - s}  + ( - 1)^{q + s} L_{p - q} L_{r - s}, 
	\end{split}
	\end{equation}
	\begin{equation*}
	\begin{split}
	5L_{k + p} F_{k + q} F_{k + r} F_{k + s}	&= F_{4k + p + q + r + s}  - ( - 1)^{s + k} F_{2k + p + q + r - s}  - ( - 1)^{r + k} F_{2k + p + q - r + s} \\
	&\quad- ( - 1)^{q + k} F_{p - q} L_{2k + r + s}  + ( - 1)^{r + s} F_{p + q - r - s}  + ( - 1)^{q + s} F_{p - q} L_{r - s}, 
	\end{split}
	\end{equation*}
	\begin{equation*}
	\begin{split}
	5L_{k + p} L_{k + q} F_{k + r} F_{k + s} 	&= L_{4k + p + q + r + s}  - ( - 1)^{s + k} L_{2k + p + q + r - s}  - ( - 1)^{r + k} L_{2k + p + q - r + s} \\
	&\quad+ ( - 1)^{q + k} L_{p - q} L_{2k + r + s}  + ( - 1)^{r + s} L_{p + q - r - s}  - ( - 1)^{q + s} L_{p - q} L_{r - s}, 
	\end{split}
	\end{equation*}
	\begin{equation*}
	\begin{split}
	L_{k + p} L_{k + q} L_{k + r} F_{k + s} &= F_{4k + p + q + r + s}  - ( - 1)^{s + k} F_{2k + p + q + r - s}  + ( - 1)^{r + k} F_{2k + p + q - r + s} \\
	&\quad+ ( - 1)^{q + k} L_{p - q} F_{2k + r + s}  - ( - 1)^{r + s} F_{p + q - r - s}  - ( - 1)^{q + s} L_{p - q} F_{r - s}, 
	\end{split}
	\end{equation*}
	\begin{equation}\label{eq.cfrtedwr}
	\begin{split}
	L_{k + p} L_{k + q} L_{k + r} L_{k + s} &= L_{4k + p + q + r + s}  + ( - 1)^{s + k} L_{2k + p + q + r - s}  + ( - 1)^{r + k} L_{2k + p + q - r + s} \\
	&\quad+ ( - 1)^{q + k} L_{p - q} L_{2k + r + s}  + ( - 1)^{r + s} L_{p + q - r - s}  + ( - 1)^{q + s} L_{p - q} L_{r - s}. 
	\end{split}
	\end{equation}
\end{lemma}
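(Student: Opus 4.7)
The strategy is to prove each of the five identities by splitting the quartic product into a product of two quadratic products, reducing each quadratic factor by the appropriate formula from Lemma~\ref{lem.lk77qra}, and then expanding the resulting $L\cdot L$ (or $L\cdot F$) ``diagonal'' cross-terms by product-to-sum formulas that rearrange \eqref{eq.t3h2005} and \eqref{eq.iix4fk6}, while leaving certain off-diagonal products intact so as to match the stated right-hand sides. Concretely, the five groupings I would use are $(FF)(FF)$ for \eqref{eq.dh8k87a}, $(LF)(FF)$ for the second, $(LL)(FF)$ for the third, $(LL)(LF)$ for the fourth, and $(LL)(LL)$ for \eqref{eq.cfrtedwr}. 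The numerical prefactors $25$, $5$, $5$, $1$, $1$ are then forced by the number of $5F\cdot F$ pairings used.

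To illustrate on \eqref{eq.dh8k87a}, I would write
\[
25 F_{k+p}F_{k+q}F_{k+r}F_{k+s} = \bigl(5F_{k+p}F_{k+q}\bigr)\bigl(5F_{k+r}F_{k+s}\bigr)
\]
and apply \eqref{eq.f5dydlo} to each factor, obtaining
\[
\bigl(L_{2k+p+q} - (-1)^{k+q}L_{p-q}\bigr)\bigl(L_{2k+r+s} - (-1)^{k+s}L_{r-s}\bigr).
\]
Expanding gives four terms. On the leading product $L_{2k+p+q}L_{2k+r+s}$ I would apply the Lucas product-to-sum rule $L_u L_v = L_{u+v}+(-1)^v L_{u-v}$ (which is \eqref{eq.iix4fk6} rearranged) to produce $L_{4k+p+q+r+s} + (-1)^{r+s}L_{p+q-r-s}$. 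On the cross term $L_{r-s}L_{2k+p+q}$ I would apply the same rule, then use $L_{-m}=(-1)^m L_m$ to bring the negative-index Lucas number into the canonical form, yielding $L_{2k+p+q+r-s}+(-1)^{r+s}L_{2k+p+q-r+s}$. The remaining two products $L_{p-q}L_{2k+r+s}$ and $L_{p-q}L_{r-s}$ stay intact. Collecting signs, in particular $(-1)^{k+s}\cdot(-1)^{r+s}=(-1)^{k+r}$, reproduces \eqref{eq.dh8k87a} exactly.

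The remaining four identities follow by exactly analogous pairings: use \eqref{eq.eh9zyvi} in place of \eqref{eq.f5dydlo} wherever an $L\cdot F$ factor appears, and \eqref{eq.z9rhta5} wherever an $L\cdot L$ factor appears. In each case only the ``diagonal'' product of the two $2k$-shifted terms is further expanded via product-to-sum, and only the cross term whose $2k$-shifted Lucas number multiplies a fixed-index Lucas is reduced; the other two off-diagonal products are precisely what appears on the stated right-hand side. The main obstacle I expect is sign book-keeping: with up to three independent parities $(-1)^{k+q}$, $(-1)^{k+s}$, $(-1)^{r+s}$ (and analogues in the other four cases) multiplying together, and the occasional normalization $L_{-m}=(-1)^m L_m$ needed for terms like $L_{r-s-2k-p-q}$, verifying that every coefficient on the right-hand side matches takes real care. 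A useful consistency check is that specializing $p=q$ or $r=s$ should recover cases of the cubic lemma already proved, and the symmetry under $(p,q)\leftrightarrow(r,s)$ built into the $(FF)(FF)$ pairing must be respected by the final form.
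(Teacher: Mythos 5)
Your proposal is correct and matches the paper's (very terse) proof, which simply states that these identities ``can be derived from the identities stated in Lemma~\ref{lem.lk77qra}'' --- i.e., exactly the pairwise quadratic decomposition you spell out, with the cross terms expanded via the product-to-sum rules of Lemma~\ref{lem.crcmndb}. Your sign bookkeeping (e.g., $(-1)^{k+s}(-1)^{r+s}=(-1)^{k+r}$ and the normalizations $L_{-m}=(-1)^mL_m$, $F_{-m}=(-1)^{m-1}F_m$) checks out in all five cases.
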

\begin{proof} These can be derived from the identities stated in Lemma~\ref{lem.lk77qra}. Identities~\eqref{eq.dh8k87a} and~\eqref{eq.cfrtedwr} are also given in~\cite{kronenburg2}.
\end{proof}
\begin{theorem} If $n$ is a positive integer and $p$, $q$, $r$ and $s$ are any integers, then
	\begin{equation}\label{eq.wqn2nu3}
	\begin{split}
	50\sum_{k = 0}^n \binom{2n}{2k} &F_{k + p} F_{k + q} F_{k + r} F_{k + s}  \\
	&\quad= (5^n  + 1)L_{2n + p + q + r + s}  - ( - 1)^s 2\cdot 5^{\frac{n}{2}} L_{n + p + q + r - s} \cos (2n\arctan \alpha )\\
	&\quad\quad-  2\cdot 5^{\frac{n}{2}} \cos (2n\arctan \alpha ) \big((-1)^r L_{n + p + q - r + s} + ( - 1)^q L_{p - q} L_{n + r + s}\big)\\
	&\quad\quad+ (-1)^s 4^n\big((- 1)^{r} L_{p + q - r - s}  + ( - 1)^{q} L_{p - q} L_{r - s}\big), 
	\end{split}
	\end{equation}
	\begin{equation*}
	\begin{split}
	10\sum_{k = 0}^n \binom{2n}{2k}&L_{k + p} F_{k + q} F_{k + r} F_{k + s}  \\
	&\quad= (5^n  + 1)F_{2n + p + q + r + s}  - ( - 1)^s 2\cdot 5^{{n}/{2}} F_{n + p + q + r - s} \cos (2n\arctan \alpha )\\
	&\quad\quad- 2\cdot 5^{{n}/{2}}\cos (2n\arctan \alpha )\big(( - 1)^r F_{n + p + q - r + s}  + ( - 1)^q F_{p - q} L_{n + r + s}\big)\\
	&\quad\quad + (-1)^s 4^n\big(( - 1)^{r} F_{p + q - r - s}  + ( - 1)^{q} F_{p - q} L_{r - s}\big), 
	\end{split}
	\end{equation*}
	\begin{equation*}
	\begin{split}
	10\sum_{k = 0}^n \binom{2n}{2k} &L_{k + p} L_{k + q} F_{k + r} F_{k + s}  \\
	&\quad= (5^n  + 1)L_{2n + p + q + r + s}  - ( - 1)^s2\cdot 5^{{n}/{2}} L_{n + p + q + r - s} \cos (2n\arctan \alpha )\\
	&\quad\quad- 2\cdot5^{{n}/{2}}\cos (2n\arctan \alpha ) \big(( - 1)^r  L_{n + p + q - r + s}  - ( - 1)^q L_{p - q} L_{n + r + s}\big)\\
	&\quad\quad +(-1)^s4^n\big( ( - 1)^{r} L_{p + q - r - s}  - ( - 1)^{q} L_{p - q} L_{r - s}\big), 
	\end{split}
	\end{equation*}
	\begin{equation*}
	\begin{split}
	2\sum_{k = 0}^n \binom{2n}{2k} &L_{k + p} L_{k + q} L_{k + r} F_{k + s}  \\
	&\quad= (5^n  + 1)F_{2n + p + q + r + s}  - ( - 1)^s 2\cdot 5^{{n}/{2}} F_{n + p + q + r - s} \cos (2n\arctan \alpha )\\
	&\quad\quad+2\cdot 5^{{n}/{2}}\cos (2n\arctan \alpha ) \big( ( - 1)^r F_{n + p + q - r + s} + ( - 1)^q L_{p - q} F_{n + r + s})\\
	&\quad\quad - (-1)^s 4^n \big(( - 1)^{r} F_{p + q - r - s}  + ( - 1)^{q} L_{p - q} F_{r - s}\big), 
	\end{split}
	\end{equation*}
	\begin{equation*}
	\begin{split}
	2\sum_{k = 0}^n \binom{2n}{2k}&L_{k + p} L_{k + q} L_{k + r} L_{k + s}  \\
	&\quad= (5^n  + 1)L_{2n + p + q + r + s}  + ( - 1)^s2\cdot 5^{{n}/{2}} L_{n + p + q + r - s} \cos (2n\arctan \alpha )\\
	&\quad\quad+  2\cdot 5^{{n}/{2}}\cos (2n\arctan \alpha ) \big(( - 1)^r L_{n + p + q - r + s}  + ( - 1)^q L_{p - q} L_{n + r + s}\big)\\
	&\quad\quad + (-1)^s4^n\big(( - 1)^{r} L_{p + q - r - s}  + ( - 1)^{q} L_{p - q} L_{r - s}\big). 
	\end{split}
	\end{equation*}
\end{theorem}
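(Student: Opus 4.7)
The plan is to prove each of the five quartic identities by the same three-step procedure: (i) invoke the appropriate product-to-sum expansion from the preceding lemma (identities \eqref{eq.dh8k87a}--\eqref{eq.cfrtedwr}), (ii) multiply by the suitable numerical constant and by $\binom{2n}{2k}$, then sum over $k=0,\dots,n$, and (iii) evaluate each of the resulting three \emph{types} of binomial sums using identities already proved earlier in the paper.

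Concretely, for the leading identity \eqref{eq.wqn2nu3} I would start from \eqref{eq.dh8k87a}, multiply through by $2\binom{2n}{2k}$, and sum on $k$. This produces six linear pieces that, after pulling out constants independent of $k$ and absorbing the factors $(-1)^k$ hidden in $(-1)^{s+k}$, $(-1)^{r+k}$, $(-1)^{q+k}$, fall into three template sums:
\begin{align*}
\textbf{Type A:}\quad & 2\sum_{k=0}^n \binom{2n}{2k} L_{4k+\sigma}, \\
\textbf{Type B:}\quad & \sum_{k=0}^n (-1)^k \binom{2n}{2k} L_{2k+\sigma}, \\
\textbf{Type C:}\quad & \sum_{k=0}^n \binom{2n}{2k}.
\end{align*}
Type A is evaluated by \eqref{eq.hrs065m} with $j=1$, giving $(L_1^{2n}+5^nF_1^{2n})L_{2n+\sigma}=(1+5^n)L_{2n+\sigma}$. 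Type B is evaluated by the Lucas case of \eqref{eq.vfsdrta} with $j=r=1$ (so $jr$ is odd), giving $\sqrt{5^n}\,L_{n+\sigma}\cos(2n\arctan\alpha)$. Type C is $2^{2n-1}=4^n/2$ by \eqref{eq.hqs0jwi}. Substituting these three evaluations with $\sigma\in\{p+q+r+s,\,p+q+r-s,\,p+q-r+s,\,r+s\}$, dividing through by the appropriate power of $2$, and collecting the remaining constant pieces (which produce the $(-1)^{r+s}L_{p+q-r-s}$ and $(-1)^{q+s}L_{p-q}L_{r-s}$ terms multiplied by Type C) yields exactly \eqref{eq.wqn2nu3}.

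The remaining four identities follow by the same template, with only the leading constant and one of the three Type A/B evaluations changed. The $L_{k+p}F_{k+q}F_{k+r}F_{k+s}$ and $L_{k+p}L_{k+q}L_{k+r}F_{k+s}$ cases replace $L_{4k+\sigma}\mapsto F_{4k+\sigma}$ in Type A (using \eqref{eq.yqfik8x}) and $L_{2k+\sigma}\mapsto F_{2k+\sigma}$ in Type B (using the Fibonacci case of the same theorem giving \eqref{eq.vfsdrta}, namely $\sqrt{5^n}F_{jr}^nF_{jrn+js}\cos(2n\arctan\alpha^{jr})$ at $j=r=1$). The $L_{k+p}L_{k+q}F_{k+r}F_{k+s}$ and $L_{k+p}L_{k+q}L_{k+r}L_{k+s}$ cases keep $L$ throughout but with sign flips on two of the six source terms, which merely change the signs in front of the $(-1)^q L_{p-q}$-summands.

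The main obstacle is purely bookkeeping: each of the six terms in, say, \eqref{eq.dh8k87a} carries its own pair of signs $(-1)^{\varepsilon_1+k}(-1)^{\varepsilon_2}$, and one must track carefully which factor $(-1)^k$ gets absorbed into the alternating Type B sum (contributing a $\cos(2n\arctan\alpha)$ factor) versus which stays outside. There is no further technical difficulty—no new identity must be proved, and the $\cos(2n\arctan\alpha)$ uniformly appearing in the answers is exactly the signature that Type B sums rather than Type A sums are the source.
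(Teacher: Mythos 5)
Your proposal is correct and follows essentially the same route as the paper: expand each quartic product via \eqref{eq.dh8k87a}--\eqref{eq.cfrtedwr}, sum against $\binom{2n}{2k}$, and evaluate the resulting non-alternating $L_{4k+\sigma}$ sums by \eqref{eq.hrs065m}, the alternating $L_{2k+\sigma}$ sums by \eqref{eq.vfsdrta} with $j=r=1$, and the plain binomial sum by \eqref{eq.hqs0jwi}. The only caution for the remaining cases is that the substitution $L\mapsto F$ is not uniform across all six source terms (e.g.\ the $(-1)^{q+k}F_{p-q}L_{2k+r+s}$ term keeps a Lucas sum), but this is exactly the bookkeeping you already flagged.
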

\begin{proof} From \eqref{eq.dh8k87a}, we have
	\[
	\begin{split}
	25\sum_{k = 0}^n \binom{2n}{2k}&F_{k + p} F_{k + q} F_{k + r} F_{k + s} \\ 
	&= \sum_{k = 0}^n {\binom{2n}{2k}L_{4k + p + q + r + s} }  - ( - 1)^s \sum_{k = 0}^n {\binom{2n}{2k}( - 1)^k L_{2k + p + q + r - s} }\\ 
	&\quad - ( - 1)^r \sum_{k = 0}^n {\binom{2n}{2k}( - 1)^k L_{2k + p + q - r + s} }  - ( - 1)^q L_{p - q} \sum_{k = 0}^n {\binom{2n}{2k}( - 1)^k L_{2k + r + s} }\\ 
	&\quad + ( - 1)^{r + s} L_{p + q - r - s} \sum_{k = 0}^n \binom{2n}{2k}  + ( - 1)^{q + s} L_{p - q} L_{r - s} \sum_{k = 0}^n \binom{2n}{2k},
	\end{split}
	\]
	which yields \eqref{eq.wqn2nu3} on account of~\eqref{eq.hrs065m}, \eqref{eq.vfsdrta} and \eqref{eq.hqs0jwi}.
\end{proof}


\begin{thebibliography}{99}
	
	\bibitem{adegoke2}
	K.~Adegoke, Binomial Fibonacci power sums, 2021, preprint, available at \url{https://www.preprints.org/manuscript/202105.0378/v1}.
	
	\bibitem{adegoke1}
	K.~Adegoke, Weighted sums of some second-order sequences, {\it Fibonacci Quart.} {\bf 56} (2018), 252--262.
	
	\bibitem{adegoke3}
	K.~Adegoke, A.~Olatinwo and S.~Ghosh, Cubic binomial Fibonacci sums, {\it Electron. J. Math.} {\bf 2} (2021), 44--51.
	
	\bibitem{Car}
	L.~Carlitz, Some classes of Fibonacci sums, {\it Fibonacci Quart.} {\bf16} (1978), 411--425.
	
	\bibitem{CarFer}
	L.~Carlitz and H.H.~Ferns, Some Fibonacci and Lucas identities, {\it Fibonacci Quart.} {\bf8} (1970), 61--73.
	
	\bibitem{hoggatt71}
	V.E.~Hoggatt, Jr., J.W.~Phillips and H.T.~Leonard, Jr., Twenty-four master identities, {\it Fibonacci Quart.} {\bf9} (1971), 1--17.
	
	\bibitem{koshy}
	T. Koshy, {\it Fibonacci and Lucas Numbers with Applications}, John Wiley \& Sons, New York, 2001.
	
	\bibitem{kronenburg2} 
	M.J.~Kronenburg, Some weighted generalized Fibonacci number summation identities, Part 2, 2021, preprint, availble at \url{https://arxiv.org/abs/2106.11838}.
	
	\bibitem{Lay}
	J.W.~Layman, Certain general binomial-Fibonacci sums, {\it Fibonacci Quart.} {\bf15} (1977), 362--366.
	
	\bibitem{long90} C.T.~Long, Some binomial Fibonacci identities, in G.E. Bergum, A.N. Philippou, and A.F. Horadam, editors, \emph{Applications of Fibonacci Numbers}, Vol.~3, Kluwer, Dordrecht, 1990, pp.~241--254.
	
	\bibitem{vajda}
	S. Vajda, {\it Fibonacci and Lucas Numbers, and the Golden Section: Theory and Applications}, Dover, Mineola, New York, 2008.
\end{thebibliography}
\end{document}